\declaretheorem[style = plain, numberwithin = section]{theorem}
\declaretheorem[style = plain,      sibling = theorem]{corollary}
\declaretheorem[style = plain,      sibling = theorem]{lemma}
\declaretheorem[style = plain,      sibling = theorem]{proposition}
\declaretheorem[style = definition, sibling = theorem]{definition}
\declaretheorem[style = definition, sibling = theorem]{example}
\declaretheorem[style = remark,     sibling = theorem]{remark}
\declaretheorem[style = remark,     sibling = theorem]{remarks}
\crefname{observation}{Observation}{Observations}
\Crefname{observation}{Observation}{Observations}
\crefname{conjecture}{Conjecture}{Conjectures}
\Crefname{conjecture}{Conjecture}{Conjectures}
\crefname{assumption}{Assumption}{Assumption}
\Crefname{assumption}{Assumption}{Assumption}
\crefname{notation}{Notation}{Notations}
\Crefname{notation}{Notation}{Notations}
\crefname{diagram}{Diagram}{Diagrams}
\Crefname{diagram}{Diagram}{Diagrams}
\DeclareMathOperator{\Spec}{Spec}				
\DeclareMathOperator{\Proj}{Proj}				
\DeclareMathOperator{\coker}{coker}				
\DeclareMathOperator{\Hom}{Hom}					
\DeclareMathOperator{\sheafHom}					
{
	\mathscr{H}\text{\kern -5.2pt {\calligra\large om}}\,
}
\DeclareMathOperator{\Sym}{Sym}					
\newcommand{\gitquot}{\mathord{					
		\mathchoice{/\mkern-6mu/}
		{/\mkern-6mu/}
		{/\mkern-5mu/}
		{/\mkern-5mu/}}}
\newcommand{\N}{\mathbb{N}}    						
\newcommand{\Z}{\mathbb{Z}}    						
\newcommand{\Q}{\mathbb{Q}}    						
\newcommand{\C}{\mathbb{C}}    						
\renewcommand{\P}{\mathbb{P}}  						
\newcommand{\GL}{\mathrm{GL}}						
\newcommand{\SL}{\mathbf{SL}}						
\newcommand{\cat}[1]{{\normalfont\mathsf{#1}}}	
\DeclareMathOperator{\Supp}{Supp}				
\newcommand{\Coh}[1]{\cat{Coh}({#1})}			
\newcommand{\ECoh}[2]{\cat{Coh}_{#1}({#2})}		
\newcommand{\ol}[1]{\overline{#1}}							
\newcommand{\shf}[1]{\mathscr{#1}}							
\newcommand{\OO}{\mathcal{O}}								
\DeclareMathOperator{\id}{id}								
\DeclareMathOperator{\im}{im}					
\newcommand{\Hilb}{\operatorname{Hilb}}						
\newcommand{\isoto}{\xrightarrow{\sim}}						
\newcommand{\injto}{\xhookrightarrow{}}						
\renewcommand{\setminus}{\smallsetminus}
\newcommand{\dual}{{}^\vee}									
\renewcommand{\tilde}{\widetilde}
\newcommand{\con}{_{\mathrm{con}}}							
\newcommand{\ie}{\leavevmode\unskip, i.e.,\xspace}
\newcommand{\eg}{\leavevmode\unskip, e.g.,\xspace}
\DeclareMathOperator{\tail}{\mathrm{t}}
\DeclareMathOperator{\head}{\mathrm{h}}
\renewcommand{\hat}{\widehat}
\newcommand{\sh}{\widehat}
\mathchardef\mhyphen="2D
\newcommand{\ngammahilb}{n\Gamma\mhyphen\Hilb(\C^2)}		
\tikzset{>=stealth',
	cvertex/.style={circle,draw=black,inner sep=1pt,outer sep=3pt},
	vertex/.style={circle,fill=black,inner sep=1pt,outer sep=3pt},
	star/.style={circle,fill=yellow,inner sep=0.75pt,outer sep=0.75pt},
	tvertex/.style={inner sep=1pt,font=\scriptsize},
	gap/.style={inner sep=0.5pt,fill=white}}
\title[Quiver varieties and framed sheaves on singularities]{Quiver Varieties and Framed Sheaves on Compactified Kleinian Singularities}
\author{Søren Gammelgaard}           
\address{Scuola Internazionale Superiore di Studi Avanzati,
	Via Bonomea 265,
	34136 Trieste,
Italy}
\email{sgammelg@sissa.it}
\begin{document}

	\setcounter{tocdepth}{1}

	\begin{abstract}
Consider a Kleinian singularity $ \C^2/\Gamma $, where $ \Gamma $ is a finite subgroup of $ \SL_2(\C) $. In this paper, we introduce a natural stack compactifying the singularity by adding a smooth stacky divisor, and we show that sets of framed sheaves on this stack satisfying certain additional criteria are closely related to a class of Nakajima quiver varieties.
This partially extends our previous work on punctual Hilbert schemes of Kleinian singularities.
\end{abstract}
	\maketitle                  

		\tableofcontents            
	\section{Introduction}\label{chap:Intro}

  Nakajima quiver varieties, introduced in \cite{Nakajima94}, are a versatile class of objects: they are normal and irreducible, when smooth, they are always hyperkähler, and when singular, they always have symplectic singularities \cite{BelSche}. There has been much recent interest in them, especially as they provide constructions of several moduli spaces (see \eg \cite{Kuznetsov07}, \cite{Kuznetsovetal}, ). In previous work \cite{CGGS, CGGS2}, we used Nakajima quiver varieties to investigate moduli spaces attached to Kleinian singularities\ie singular surfaces of the form $ \C^2/\Gamma $, for $ \Gamma $ a finite subgroup
  of $ \SL_2(\C) $. %
In \cite{CGGS}, we showed that there is an isomorphism\footnote{
	The proof of the isomorphism of (the reduced subschemes underlying) punctual Hilbert schemes with certain Nakajima quiver varieties relies on \cite[Lemma 4.3]{CGGS}, the proof of which is not correct. We thank Yehao Zhou for pointing this out to us. The published arguments are however strong enough to establish a morphism $  \mathfrak{M}_{\theta_{0}}(1, n\delta)\to \Hilb^n{(\C^2/\Gamma)}_{red} $ which gives a bijection of closed points. With some additional arguments, that this morphism is an isomorphism can nevertheless be shown to hold~\cite{CrawForthcoming}.
} from (the reduced subscheme of) a punctual Hilbert scheme $ \Hilb^n(\C^2/\Gamma)_{red}  $ of a Kleinian singularity to a specific Nakajima quiver variety $ \mathfrak{M}_{\theta_{0}}(1, n\delta) $.  We later showed that a larger class of Nakajima quiver varieties can similarly be interpreted as `equivariant Quot schemes' (see \cite[Section 3]{CGGS2}).  In this paper, we continue this line of investigation, by defining various sets of (isomorphism classes of) framed sheaves on a stack compactification of $ \C^2/\Gamma $, and showing that each of these sets is in canonical bijection with the closed point sets of appropriate Nakajima quiver varieties.

Once and for all, fix a finite subgroup $\Gamma\subset \SL(2,\mathbb{C})$.
Starting with $ \Gamma $, we shall use the McKay correspondence \cite{McKay80} to construct a quiver $ Q_\Gamma $, with a vertex set $ Q_{\Gamma, 0} $ corresponding to the irreducible representations of $ \Gamma $.  
With this fixed quiver $ Q_\Gamma $, a Nakajima quiver variety $ \mathfrak{M}_{\theta}(\mathbf{w}, \mathbf{v}) $ is a GIT quotient -- we detail the construction in \cref{sec:McKayQuiver} -- depending on a \emph{stability parameter} $ \theta$, a \emph{framing dimension vector} $ \mathbf{w}\in \Z_{\ge 0}^{Q_{\Gamma, 0}} $, and a \emph{dimension vector} $ \mathbf{v}\in \Z_{\ge 0}^{Q_{\Gamma, 0}} $. In our cases, however, we shall always choose the framing vector to having a single nonzero element. We will thus speak of it as the \emph{framing rank} $ r\coloneqq \mathbf{w}_0$, and denote our Nakajima quiver varieties as $ \mathfrak{M}_\theta(r, \mathbf v) $.  

The space of stability parameters is equipped with a wall-and-chamber structure, and it is known (see \eg \cite{VaVa, Wang99}) that for a stability parameter $ \theta $ lying in a specific chamber $ C^+ $ and a special dimension vector $ n\delta $, the quiver variety $ \mathfrak{M}_\theta(1, n\delta) $ is isomorphic to the `equivariant Hilbert scheme' $ \ngammahilb $, a specific subscheme of the $ \Gamma $-fixed locus $ \Hilb(\C^2)^\Gamma $ of the punctual Hilbert scheme of the plane itself. The quiver variety $ \mathfrak{M}_{\theta_{0}}(1, n\delta) $, mentioned above, is obtained by varying the stability parameter $ \theta $ from the chamber $ C^+ $ into a specific ray in the closure $ \ol{C^+} $. 

	With the same parameter $ \theta_0 $, we aim to describe the quiver variety $ \mathfrak{M}_{\theta_0}(r, n\delta)$ for any pair of positive integers $ r,n $. As indicated, this quiver variety will turn out to parametrise a set $ Y_{r,n} $ of isomorphism classes of framed sheaves on a projective {stack} $ \mathcal X $, of which the singularity $ \C^2/\Gamma $ is an open substack. We will construct $ \mathcal X $ as a quotient of $ \P^2 $ by $ \Gamma $, glued together from two patches, where we on one patch take the `stack-theoretic' quotient, and on another the `scheme-theoretic' quotient, see \cref{sec:weirdQuotient} for the full construction.
	There is a morphism of stacks $ \P^2\to \mathcal X $, and in particular the image of the divisor $ l_\infty \coloneqq \P^2\setminus \C^2 $ is a (stacky) divisor $ d_\infty = [l_\infty/\Gamma] $. 

We define the sets of framed sheaves we are interested in as follows: (here $ \shf I_{d_\infty} $ is the ideal sheaf of $ d_\infty $ in $ \mathcal X $):
\begin{definition}\label{def:setYrn}
	Let $ Y_{r,n} $ be the set of isomorphism classes of pairs $ (\shf F, \phi_{\shf F}) $, where $ \shf F $ is a torsion-free coherent $ \OO_{\mathcal X} $-module of rank $ r $, with $ \dim H^1(\mathcal X, \shf F\otimes \shf I_{d_\infty})= n $, and where $ \phi_{\shf F} $ is an isomorphism  $\phi_{\shf F }\colon \shf F|_{d_\infty}\isoto \OO_{d_\infty}^{\oplus r} $.

\end{definition}
	The pair $ (\shf F, \phi_{\shf F}) $ is a framed sheaf. The set $ Y_{1,n} $ also carries a canonical bijection to the set of closed points $ \Hilb^n(\C^2/\Gamma)(\C) $.
	
Our main result is then:
\begin{theorem}\label{thm:MainWeaker}
	There is a canonical bijection between $ Y_{r,n} $ and the set of closed points of the Nakajima quiver variety $ \mathfrak{M}_{\theta_0}(r, n\delta) $.
\end{theorem}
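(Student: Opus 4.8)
The plan is to realise both sides as parametrising the same $\Gamma$-equivariant linear algebra data, via an equivariant ADHM/monad construction on $\P^2$ in the spirit of \cite{Nakajima94}. First I would use the morphism of stacks $\pi\colon \P^2\to\mathcal X$ (see \cref{sec:weirdQuotient}) to translate a framed sheaf $(\shf F,\phi_{\shf F})\in Y_{r,n}$ into a $\Gamma$-equivariant torsion-free sheaf $E=\pi^*\shf F$ on $\P^2$ together with a $\Gamma$-equivariant framing $E|_{l_\infty}\isoto \OO_{l_\infty}^{\oplus r}$, where $\Gamma$ acts trivially on the framing space $\C^r$. The hybrid nature of $\mathcal X$ enters precisely here: on the patch at infinity, where $\mathcal X$ is the stacky quotient, equivariant sheaves and their framings correspond exactly to sheaves on $\mathcal X$ and framings along $d_\infty$; on the patch containing the singularity, where $\mathcal X$ is the scheme-theoretic quotient, the condition that $\shf F$ descend to $\C^2/\Gamma$ becomes a condition on the $\Gamma$-action of $E$ near the fixed locus. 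The identity $H^1(\mathcal X,\shf F\otimes\shf I_{d_\infty})=H^1(\P^2,E\otimes\OO(-l_\infty))^\Gamma$ then translates $\dim H^1=n$ into a statement about the $\Gamma$-module $V:=H^1(\P^2,E\otimes\OO(-l_\infty))$; combined with the trivial framing at infinity and an equivariant Riemann--Roch computation, this should force $V\cong \C^n\otimes\C[\Gamma]$ as a $\Gamma$-representation, so that its isotypic decomposition has dimension vector exactly $n\delta$.

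Next I would run the $\Gamma$-equivariant Beilinson spectral sequence to present $E$ as the cohomology of a monad and extract ADHM data $(B_1,B_2,i,j)$, with $B_1,B_2$ assembling into a $\Gamma$-equivariant map $V\to V\otimes\C^2$ (the defining representation of $\Gamma$), together with $i\colon W\to V$ and $j\colon V\to W$ where $W=\C^r$. By the McKay correspondence \cite{McKay80}, a $\Gamma$-equivariant map $V\to V\otimes\C^2$ is precisely a representation of the McKay quiver $Q_\Gamma$ on the graded vector space determined by the isotypic decomposition of $V$, and $i,j$ supply the framing arrows at the vertex $0$. The ADHM relation $[B_1,B_2]+ij=0$ becomes the moment-map equation cutting out $\mathfrak{M}_{\theta_0}(r,n\delta)$ as constructed in \cref{sec:McKayQuiver}. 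Conversely, given a point of the quiver variety, I would reverse the monad to build a $\Gamma$-equivariant framed sheaf on $\P^2$ and descend it along $\pi$ to a framed sheaf on $\mathcal X$, and then verify that the two constructions are mutually inverse on isomorphism classes.

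The two steps I expect to carry the real difficulty are the precise matching of stability conditions and the handling of the hybrid stack. For stability, I must show that torsion-freeness of $\shf F$ together with the descent condition on the singularity patch is \emph{equivalent} to $\theta_0$-semistability of the associated quiver representation. Since $\theta_0$ lies on a wall of the chamber structure—it is the boundary parameter for which $\mathfrak{M}_{\theta_0}(1,n\delta)\cong\Hilb^n(\C^2/\Gamma)$, cf.\ \cite{CGGS,CGGS2}—strictly semistable points occur, and closed points of the quiver variety are S-equivalence classes of $\theta_0$-semistable representations. I would therefore have to check that these S-equivalence classes are in bijection with isomorphism classes of framed sheaves, i.e.\ that the framing rigidifies the objects enough to collapse the gap between semistability and polystability on the sheaf side. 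This, together with verifying that the scheme-theoretic quotient near the singularity is exactly what makes the singular surface (rather than its resolution) appear—thereby pinning down the parameter $\theta_0$ as opposed to a chamber parameter—is the crux of the argument; the monad construction and the McKay bookkeeping, while technical, are essentially the equivariant form of \cite{Nakajima94} and should go through once the dictionary of the first step is set up correctly.
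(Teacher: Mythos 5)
There is a genuine gap, and it sits exactly where your argument needs to produce the dimension vector $n\delta$. You claim that the framing at infinity plus an equivariant Riemann--Roch computation forces $V=H^1(\P^2,E\otimes\OO(-l_\infty))\cong\C^n\otimes\C[\Gamma]$. This is false: the framing constrains nothing about the local structure of $E$ at the origin, and the pullback of a sheaf from the \emph{scheme-theoretic} quotient $\C^2/\Gamma$ need not look like $n$ copies of the regular representation there. Concretely, take $r=1$, $\Gamma=\Z/2$, and $\shf F$ the ideal sheaf of the singular point $p$; then $\pi^T\shf F$ restricts on $\C^2$ to the ideal $(x^2,xy,y^2)$, and $V\cong\rho_0\oplus\rho_1^{\oplus 2}$ has dimension vector $(1,2)\neq\delta=(1,1)$. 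So your monad construction lands in $\mathfrak{M}(r,\mathbf v)$ for a dimension vector $\mathbf v$ that varies with $\shf F$ (only $\mathbf v_0=n$ is pinned down, which is what the paper's \cref{cor:torfreepullbackisgood} actually proves), and there is no direct dictionary between $Y_{r,n}$ and quiver data of dimension $(1,n\delta)$. Had your claim been true, your construction would identify $Y_{r,n}$ with $X_{r,n\delta}\cong\mathfrak{M}_\theta(r,n\delta)(\C)$ for $\theta$ in the chamber $C^+$ --- i.e.\ with the \emph{resolution} --- rather than with the singular variety $\mathfrak{M}_{\theta_0}(r,n\delta)$.

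The missing mechanism is precisely what the paper supplies. The functors $\pi^T$ and $D=\pi_*(-)^\Gamma$ (suitably glued) are \emph{not} mutually inverse near the singular point: $D\circ\pi^T\cong\mathrm{id}$ (\cref{lem:upanddowniso}) but $\pi^T(D(\shf E))\hookrightarrow\shf E$ is in general a strict inclusion (\cref{lem:downUpInjection}), so many $\Gamma$-equivariant sheaves on $\P^2$, of varying dimension vectors, give the same framed sheaf on $\mathcal X$. The paper shows these fibres are exactly the $R_{\theta_0}$-equivalence classes: each class has a distinguished \emph{concentrated} $\theta_0$-stable module $M\con$ with $\dim_0 M\con=n$ and $\dim M\con\le(1,n\delta)$ (the dimension estimate imported from \cite{CGGS}), which after padding by vertex simples determines a unique closed point of $\mathfrak{M}_{\theta_0}(r,n\delta)$; well-definedness of $\shf F\mapsto D(\sh M)$ across a class is the content of \cref{prop:liftingisindep}, proved by showing the kernel $K$ of $M\twoheadrightarrow M\con$ contributes only a cohomology sheaf $\mathcal H^2(C_K^\bullet)$ that is killed by $D$. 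Surjectivity of the VGIT morphism $\mathfrak{M}_\theta(r,n\delta)\to\mathfrak{M}_{\theta_0}(r,n\delta)$ (\cref{lem:VGITsurjectivity}) is also needed to realise every class by a module of dimension exactly $(1,n\delta)$. Your closing paragraph correctly flags S-equivalence at the wall as a difficulty, but the resolution is not that ``the framing rigidifies enough to collapse the gap''; it is the concentrated-module analysis above. A smaller omission: $\pi^*\shf F$ can have torsion at the origin, so you must pass to $\pi^T\shf F=\pi^*\shf F/\mathrm{tor}$ before invoking any monad description.
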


\begin{remark}\label{rmk:MainExtend}
	It is possible to improve \cref{thm:MainWeaker} by exhibiting a moduli space of which $ Y_{r,n} $ is the set of closed points, and showing that the mentioned bijection comes from a genuine scheme morphism. We will do this in a later paper. However, we remain (as of writing) unable to extend the morphism to an \emph{isomorphism}.
\end{remark}

	Quiver varieties built from a framed McKay quiver with framing rank $ r>1 $ have been investigated before. The wall-and-chamber structure on the parameter space for $ \theta $ was explicitly computed by Bellamy, Craw, and Schedler (\cite{BelCra}, \cite[Theorem 4.18]{BellamyCrawSchedler}) for every framing rank $ r $ and dimension vector $\mathbf v $. 
	For $ \Gamma $ a trivial group, it is known \cite{Nakajimabook} that the quiver variety $ \mathfrak{M}_{\theta}(r, n\delta) $ is isomorphic to the moduli space of framed torsion-free sheaves on $ \P^2 $ of rank $ r $ and second Chern class $ n $, framed along the divisor $ l_\infty$.  With a nontrivial $ \Gamma $, 
	there is a highly relevant result of Varagnolo and Vasserot (see \cref{thm:VaVa}) stating that, for a generic stability parameter $ \theta$ lying in the chamber $ C^+$, there is a canonical bijection of $\mathfrak{M}_{\theta}(r, n\delta)(\C)$ with the the set of isomorphism classes of framed coherent $ \Gamma $-equivariant sheaves of rank $ r $ on $ \P^2 $, satisfying certain further conditions, see \cref{def:Xrv} for the complete definition. For $ r=1 $, we can identify this space with $ \ngammahilb $.

Another description of Nakajima quiver varieties with framing rank $ r>1 $ was given by Nakajima \cite{NakALE}. He constructs a (complex analytic) orbifold compactification $ \hat{{\mathcal X}} $ of the minimal resolution $ \hat{\C^2/\Gamma} $ of $ \C^2/\Gamma $, and shows that there are Nakajima quiver varieties $ \mathfrak{M}_{\theta^{-}}(r, \mathbf{v}) $ isomorphic to moduli spaces of framed sheaves on $ \hat{\mathcal{X}} $. Here $ \theta^- $ is a stability parameter in a specific chamber $ C^{-} $ in the space of stability parameters, and the stack $ \hat{\mathcal{X}} $ is constructed from $ \hat{\C^2/\Gamma} $ in a similar manner to how we shall construct $ \mathcal{X} $ from $ \C^2/\Gamma $. When $ r=1 $, $ \mathfrak{M}_{\theta^{-}}(1,\mathbf{v}) $ is isomorphic to the Hilbert scheme of points on $ \hat{\C^2/\Gamma} $, a result due to Kuznetsov~\cite{Kuznetsov07}.
A similar construction of a stack compactification of $ \hat{\C^2/\Gamma} $ also appears in a conjecture of Bruzzo, Sala, Pedrini, and Szabo, see \cite[Conjecture 4.14]{BPSSz}.

One possible further direction of investigation, not pursued here, is to determine formulae for the generating series $ Z_r(q)= \sum_{m=0}^\infty \chi(\mathfrak{M}_{\theta_{0}}(r, m\delta))q^m $. This has been done for \emph{smooth} Nakajima quiver varieties \cite{Hausel}, even extended to motivic versions \cite{Wyss}. Finding such generating series for singular Nakajima quiver varieties -- such as those we focus on in this paper -- turns out to be more difficult. To the author's knowledge, the only known result is a formula for $ Z_1(q) $, proved by Nakajima in \cite{Nak20}, using results of \cite{CGGS} and \cite{GNS}.

\subsection*{Acknowledgements.}
This paper is adapted from my DPhil thesis (\cite{ID}), defended in September 2022. I am grateful to my supervisor, Balázs Szendr\H{o}i, for his guidance and encouragement while researching this, and his thorough reading of a preliminary version of this paper. Together with Ádám Gyenge and Alastair Craw, I am grateful for our fruitful collaborations that stimulated this work, and further conversations on quiver varieties. I also thank Yehao Zhou, who pointed out a gap in a proof in our previous work, the discussion of which cleared up this work as well. Finally, I thank Andrea Ricolfi for his comments on a previous version of this paper.

I was supported by an Aker Scholarship and by a SISSA Mathematical Fellowship.

\subsection*{Conventions.}
We work throughout over $\C$. In particular, all schemes are  $\C$-schemes and all tensor products are taken over $\C$ unless otherwise indicated. We often use the following partial order on dimension vectors: for any $n\in \mathbb{N}$ and for $u=(u_i),v=(v_i)\in \mathbb{Z}^n$, we define $u\le v$ if $u_i\le v_i$ for all $1\leq i\leq n$. 

We will write $ \Coh{X} $ for the category of coherent sheaves on a scheme (or stack) $ X $.
If $ X $ is equipped with an action of some group $ G $, we write $ \ECoh{G}{X} $ for the category of $ G $-equivariant coherent sheaves on $ X $. 
$ [X/G] $ always denotes the stack-theoretic quotient of $ X $ by $ G $, and $ X/G $ the scheme-theoretic quotient, if it exists. 
Given a sheaf $ \shf F $ on a scheme (or stack) $ X $, we write $ h^i(X, \shf F) $ for $ \dim H^i(X, \shf F) $.

	\section{Preliminaries}\label{chap:BackgroundQuivers}
In this section we collect the necessary background material on quiver varieties and projective stacks.
We start by describing how a finite group $ \Gamma\subset \SL_2(\C) $ can be used to define a \emph{McKay quiver}. We then define the Nakajima quiver varieties, in fact we will define them in three equivalent ways. Our Nakajima quiver varieties will depend on a stability parameter, varying in a wall-and-chamber structure. We define a chamber and ray in it that will be of interest later, and introduce a notion of `concentrated module'. We also give a definition of a \emph{framed sheaf} on a projective Deligne-Mumford stack.

\subsection{Quiver varieties from framed McKay quivers}\label{sec:McKayQuiver}
Recall our finite group $\Gamma$. Let $ R= \{\rho_0, \dots, \rho_s\} $ be the irreducible representations of $ \Gamma $, with $ \rho_0 $ the trivial representation. Let $L$ denote the tautological two-dimensional representation of $ \Gamma $, given by the inclusion $\Gamma\subset \SL(2,\C)$.

The \emph{McKay graph} of $\Gamma$ has vertex set $\{0, 1, \dots, s\}$ where vertex $i$ corresponds to the representation $\rho_i$ of $\Gamma$, with $\dim\Hom_\Gamma(\rho_j,\rho_i\otimes L)$ edges between vertices $i$ and $j$. Note that, as $ L $ is a self-dual $ \Gamma $-representation, this is symmetric in $ i $ and $ j $.

By the McKay correspondence \cite{McKay80}, the McKay graph is an extended Dynkin diagram of $ADE$ type. 
Let $ Q_{\Gamma,0} $ be the vertex set of this graph, and let $ Q_{\Gamma, 1} $ be the set of all pairs consisting of an edge together with an orientation, we shall think of such a pair as an arrow between two vertices in $  Q_{\Gamma,0} $. If $ a\in Q_{\Gamma, 1} $, we write $ \ol{a} $ for the same edge with the opposite orientation.
Then $ Q_\Gamma \coloneqq (Q_{\Gamma,0}, Q_{\Gamma, 1})$ is a quiver, which we will call the \emph{McKay quiver}.
The McKay graph determines an affine Lie algebra $\mathfrak{g}$ (see \eg \cite[Chapter 1.3]{Kac}), and we identify its root lattice with $ \Z^{Q_{\Gamma, 0}} $, such that the  minimal imaginary root of $ \mathfrak{g} $ is identified with \[\delta\coloneqq (\dim \rho_i)_{i\in {Q_{\Gamma,0}}} .\]

We now fix a positive integer $ r $.
Extend $ Q_{\Gamma} $ by introducing a \emph{framing vertex} $ \infty $, together with $ r $ arrows $ b_1,\dots b_r $ from $ \infty $ to $ 0 $, and $ r $ arrows $ \ol{b_1}, \dots, \ol{b_r} $ from $ 0 $ to $ \infty $. Set $ {Q_0} = Q_{\Gamma, 0}\cup \{\infty\}$, and $ Q_1 = Q_{\Gamma, 1}\cup \{b_1,\dots, b_r, \ol{b_1},\dots, \ol{b_r}\}$. Then \[ {Q} \coloneqq ({Q_0}, {Q_1}) \] forms a quiver, which we will call the \emph{framed McKay quiver}.
For each oriented edge $a\in Q_1$ we write $ \tail(a), \head(a) $ for the tail and head of $a$ respectively.

Choose a map $ \epsilon\colon Q_1\to\{-1, 1\}$ such that $ \epsilon(a)\ne \epsilon(\overline{a}) $ for all $ a\in {Q_1} $, and choose a vector $ \mathbf{v}\in \Z^{Q_{\Gamma,0}} $, with all elements nonnegative. 
Furthermore, let $ \rho_\infty $ be a formal symbol such that $ \{\rho_i|i\in Q_0\} $ is a basis for $ \Z^{Q_0} $ as a $ \Z $-module.
Then we can consider the vector $v = \rho_\infty + \mathbf{v}=(1,\mathbf v)\in \Z^{Q_0} $. 
Let \[ M(v) = \bigoplus_{a\in Q_1}\Hom(\C^{v_{\tail(a)}}, \C^{v_{\head(a)}}) \] be the space of all $ Q $-representations of dimension $ v $. The group $ G(v) \coloneqq \Pi_{i\in Q_{0}} \GL(v_i) $ acts naturally on this space by conjugation, with the group $ \C^\times $ of diagonal scalar matrices acting trivially. The induced action of the quotient $ G_{\mathbf v} \coloneqq G(\mathbf v)/\C^\times$ induces a moment map $ \mu $ from $ M(v) $ to the dual $ \mathfrak{g}\dual $ of the Lie algebra $ \mathfrak{g} $ of $ G_{\mathbf v} $ (see \eg \cite[\textsection 8.1]{MFK}).
If we identify $ \mathfrak{g} $ with $ \mathfrak{g}\dual $ using the trace pairing, we find (\cite[676]{Kuznetsov07}) that $ \mu $ is given by
\[  (B)\mapsto \left(\sum_{\substack{\tail(a)=i\\a\in Q_1}}\epsilon(a)B_aB_{\ol{a}}\right)_{i\in Q_0}.  \]

The space of characters of $ G_{ \mathbf v} $ can be identified with integer-valued maps in the space \[ \Theta_{r, \mathbf v}=\{\theta\in \Hom_\Z(\Z^{Q_0}, \Q)\mid\theta(1, \mathbf{v})=0\} \]
by the map \begin{equation}\chi\colon \theta \mapsto \left(g\mapsto \Pi_{i\in Q_0} (\det g_i)^{\theta_i}\right)  .\end{equation}\label{eq:defChi}
We call $ \Theta_{r, \mathbf v} $ the space of stability parameters. As a vector space, it is independent of $ r $, but we indicate $ r $ in the notation because we will later equip $ \Theta_{r, \mathbf v} $ with a wall-and-chamber structure that may depend on $ r $. 

\begin{definition}{(\cite{Nakajima94, Kuznetsov07})}\label{def:quiverVariety}
	Given a stability parameter $ \theta\in \Theta_{r, \mathbf{v}} $, define the \emph{Nakajima quiver variety} to be the GIT quotient \[ \mathfrak{M}_\theta(r, \mathbf{v}) \coloneqq\left({\mu^{-1}(0)\gitquot_{\chi(\theta)}} G_\mathbf{v}\right)_\mathrm{red}= \left(\Proj \bigoplus_{k\ge0}\C[\mu^{-1}(0)]^{k\chi(\theta)}\right)_\mathrm{red}\]

\end{definition}
Note in particular that we give $\mathfrak{M}_{\theta}(r,\mathbf v)$ the reduced scheme structure.
\begin{remarks}
	\begin{itemize}
		\item We reverse the order of the dimension vectors compared to the standard notation appearing in (for example) \cite{Kuznetsov07, VaVa} --- we put the framing rank $ r $ first.		
		\item Some authors replace the exponent $ \theta_i $ in \eqref{eq:defChi} by $ -\theta_i $, and make corresponding sign changes later. This happens, for instance, in \cite{NakALE}.
		\item It is possible to replace $ \mu^{-1}(0) $ appearing in \cref{def:quiverVariety} by $ \mu^{-1}(\lambda) $ for certain $ 0\ne \lambda \in \mathfrak{g}$. This leads, for instance, to moduli spaces of sheaves on \emph{noncommutative} projective planes, see \cite{Kuznetsovetal}.
	\end{itemize}
\end{remarks}

We will mainly focus on the case $ \mathbf{v}=n\delta $ for some integer $ n $.

The structure of these Nakajima quiver varieties is in many ways well understood. We have for instance the following very useful result:
\begin{lemma}
	\label{lem:Mthetageometry}
	For all $\theta\in \Theta_\mathbf{v}$, the scheme $\mathfrak{M}_\theta(r, \mathbf v)$ is irreducible and normal, with symplectic singularities.
\end{lemma}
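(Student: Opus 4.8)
The plan is to read off the three properties from the description of $\mathfrak{M}_\theta(r,\mathbf v)$ as the reduced GIT quotient $(\mu^{-1}(0)\gitquot_{\chi(\theta)}G_{\mathbf v})_{\mathrm{red}}$ of \cref{def:quiverVariety}, invoking the established theory of quiver varieties rather than arguing from scratch. The crucial observation is that normality is \emph{subsumed} by the symplectic-singularities claim: in Beauville's sense, a variety with symplectic singularities is by definition normal. Hence it suffices to establish two things, namely (i) irreducibility and (ii) that $\mathfrak{M}_\theta(r,\mathbf v)$ has symplectic singularities; normality then comes for free.

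For (ii) I would appeal directly to \cite{BelSche}, whose main theorem asserts that a nonempty Nakajima quiver variety attached to a framed quiver, a moment map, and a GIT parameter has symplectic singularities. The first task is therefore bookkeeping: to check that our framed McKay quiver $Q$, the moment map $\mu$ of \cref{sec:McKayQuiver}, and the linearisation $\chi(\theta)$ fit their hypotheses verbatim, and that the varieties in question are nonempty for the dimension vectors $(r,\mathbf v)$ we consider (nonemptiness for $\mathbf v=n\delta$ with $r,n\ge 1$ is standard, since such $\mathfrak{M}_\theta(r,\mathbf v)$ map onto the nonempty affine quotient $\mathfrak{M}_0(r,\mathbf v)$). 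The symplectic form itself arises by Hamiltonian reduction of the canonical symplectic form on $M(v)$ and extends over a resolution; this is precisely what \cite{BelSche} verify.

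For (i) I would reduce irreducibility of the quotient to irreducibility of the fibre $\mu^{-1}(0)$. The $\chi(\theta)$-semistable locus $\mu^{-1}(0)^{\mathrm{ss}}$ is a $G_{\mathbf v}$-invariant open subscheme of $\mu^{-1}(0)$, and $\mathfrak{M}_\theta(r,\mathbf v)$ is its good quotient, so there is a surjection $\mu^{-1}(0)^{\mathrm{ss}}\twoheadrightarrow \mathfrak{M}_\theta(r,\mathbf v)$ of topological spaces. Since the continuous image of an irreducible space is irreducible, and a nonempty open subscheme of an irreducible scheme is irreducible, it is enough to know that $\mu^{-1}(0)$ is irreducible. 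This is exactly the content of Crawley-Boevey's study of Marsden--Weinstein reductions for quivers: when the dimension vector $(1,\mathbf v)$ of the framed quiver is a positive root, $\mu^{-1}(0)$ is a reduced, irreducible complete intersection of the expected dimension. For our dimension vectors this root condition holds (and for $\mathbf v=n\delta$ it holds for every $r,n\ge 1$), which closes the argument.

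The main obstacle I anticipate is \emph{uniformity in $\theta$}. The smooth, generic-chamber case is classical, but the lemma is asserted for \emph{all} $\theta\in\Theta_{r,\mathbf v}$, including the walls where $\mathfrak{M}_\theta(r,\mathbf v)$ acquires singularities; I must make sure the cited theorems are stated for arbitrary GIT parameters and that the semistable locus stays nonempty as $\theta$ crosses walls. The cleanest way to secure this is precisely the reduction above: irreducibility is deduced from the $\theta$-independent fibre $\mu^{-1}(0)$, while \cite{BelSche} establish symplectic singularities for every linearisation. A secondary, purely technical point is that the GIT quotient might a priori fail to be reduced, but since symplectic singularities force normality and hence reducedness, passing to $(\,\cdot\,)_{\mathrm{red}}$ as in \cref{def:quiverVariety} changes nothing.
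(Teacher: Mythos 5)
Your proposal is essentially correct and, at its core, takes the same route as the paper: the paper's entire proof is a citation to Bellamy and Schedler~\cite{BelSche} (their Theorem~1.2 and Proposition~3.21), which already delivers irreducibility, normality and symplectic singularities in one stroke for these framed quiver varieties. Your observation that normality is automatic once symplectic singularities are established is correct and is a nice economy, though it is not needed if one quotes \cite{BelSche} in full. Where you genuinely diverge is in re-proving irreducibility by hand via the surjection $\mu^{-1}(0)^{\mathrm{ss}}\twoheadrightarrow\mathfrak{M}_\theta(r,\mathbf v)$ and Crawley-Boevey's analysis of the fibre of the moment map; this buys you a $\theta$-independent argument and some self-containedness, at the cost of an extra hypothesis you should state more carefully. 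Crawley-Boevey's irreducibility theorem for $\mu^{-1}(0)$ does not hold for an arbitrary positive root: the clean sufficient condition is that the dimension vector lie in the set $\Sigma_0$ of dimension vectors admitting a simple representation of the preprojective algebra (equivalently, that it not admit a decomposition into roots violating the dimension inequality). For $(1,n\delta)$ with $r,n\ge 1$ this is satisfied, but since the lemma is stated for all $\mathbf v$, your route would need either to verify the relevant condition for each $\mathbf v$ actually used, or to fall back on \cite{BelSche}, whose Proposition~3.21 handles general $\mathbf v$ via the canonical decomposition. Your worry about uniformity in $\theta$ and about nonemptiness of the semistable locus is well placed and is indeed covered by the cited results.
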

\begin{proof}
	See Bellamy and Schedler~\cite[Theorem 1.2, Proposition~3.21]{BelSche}.
\end{proof}

\subsection{The preprojective algebra}\label{sec:StabParams}

Let $R(\Gamma)$ denote the representation ring of $\Gamma$. Then $ \Z^{Q_0} \cong\Z\oplus R(\Gamma)$ considered as $\Z$-modules.
Again, let $ v=\rho_\infty + \mathbf{v} \in \Z^{Q_0} $ be nonnegative, and choose some $\theta\in \Theta_{r, \mathbf v}$. 
Let $ \C{Q} $ be the path algebra of $ {Q} $, and
let $ \mathcal{J} $ be the $ \C{Q} $-ideal generated by the expression \begin{equation} \sum_{a\in {Q_1}}\epsilon(a)a\ol{a} .\end{equation}\label{eqn:preprojectiverelation} We will call $ \mathcal{J} $ the (ideal of) \emph{preprojective relations}. 
\begin{definition}\label{def:PreProjAlg}
	Set $ \Pi \coloneqq \C{Q}/\mathcal{J}$. This is the \emph{preprojective algebra} determined by $ Q $.
\end{definition} 
The preprojective algebra $\Pi$ does not depend on the choice of the map~$\epsilon$ \cite[Lemma 2.2]{CBH98}. 

Equivalently, multiplying both sides of this relation by the vertex idempotent of $ \C Q $ at vertex~$i$ shows that~$\Pi$ can be presented as the quotient of $\C Q$ by the ideal
\begin{equation}
	\label{eqn:Pirelations}
	\left( \sum_{\head(a) = i} \epsilon(a) a\ol{a} \mid i\in Q_0\right).
\end{equation}

A $ \Pi $-module is then, up to isomorphism, equivalent to the data of a $ Q $-representation on which the preprojective relations $ \mathcal J $ act by $ 0 $, and we will call such a representation a $ (Q, \mathcal J) $-representation.
Given a $ \Pi $-module $ M $, we write $ M_i $ for the vector space which is the part of $ M $ supported at the vertex $ i $. By the dimension of a $ \Pi $-module $ M $, we shall always mean the dimension \emph{vector} $ (\dim_\infty M, \dim_0 M, \dots, \dim_s M) =(\dim M_\infty, \dim M_0, \dots, \dim M_s)$ of $ M $, and not its dimension as a vector space. For any $ \theta\in \Theta_{r, \mathbf{v}} $ and any $ \Pi $-module $ M $ (or $ Q $-representation $ V $), we will write $ \theta(M), \theta(V) $ for $ \theta(\dim M), \theta(\dim V) $ respectively.

Given a stability parameter $ \theta $, a $ \Pi $-module $ M $ is $ \theta $-\emph{stable}, respectively $ \theta $-\emph{semistable}, if there is no submodule $ U\subset M $ such that $ \theta(U)\le 0 $, respectively $ \theta(U)<0 $. A \emph{$ \theta $-polystable} module is a direct sum of $ \theta $-stable modules.
Two $ \theta $-semistable modules $ M, N $ are said to be \emph{$ S_{\theta} $-equivalent}, if there are composition series of $ \theta $-semistable modules 
\[ 0= M^0\subsetneq M^1\subsetneq\dots\subsetneq M^{k_1}=M, \qquad 0 = N^0\subsetneq N^1\subsetneq \dots \subsetneq N^{k_2}=N \] such that 
\begin{enumerate}
	\item Every $ M^i/M^{i-1} $ and every $ N^j/N^{j-1} $ is a $ \theta $-stable  $ \Pi $-module, and
	\item The polystable modules $ \bigoplus_{i=1}^{k_1}M^i/M^{i-1} $ and $ \bigoplus_{j=1}^{k_2} N^j/N^{j-1} $ are isomorphic.
\end{enumerate}
A one-dimensional $ \Pi $-module will be called a \emph{vertex simple} module.

It will be useful to extend $ S_\theta$-equivalence to $ \Pi $-modules of different dimensions. 

\begin{definition}\label{def:Tequivalence}
	Let $ I\subset {Q_0\setminus \{\infty\}}= Q_{\Gamma, 0} $, and suppose that $ \theta $ is a stability parameter such that $ \theta(\rho_i)=0 $ if and only if $ i\in I $.
	Now let $ M, N $ be two $ \theta $-semistable $ \Pi $-modules, not necessarily of the same dimension.
	Let $ \tilde{M}, \tilde{N} $ be the $ \theta $-polystable modules respectively $ S_{\theta} $-equivalent to $ M$ and $ N $.
	
	We define $ M$ and $ N $ to be \emph{$ R_{\theta} $-equivalent} if there are $ \Pi $-modules $ M', N' $  with $ M'_j = N'_j =0 $ if $ j\not\in I $, such that $ M', N' $ are (possibly empty) direct sums of vertex simple $ \Pi $-modules, and that there is an isomorphism
	\[ \tilde{M}\oplus M' \simeq \tilde{N}\oplus N'.\]
	
	We say that $ \theta \in \Theta_{r, \mathbf v}$ is \emph{generic} if every $ \theta $-semistable $ \Pi $-module of dimension $ (1, \mathbf{v}) $ is $ \theta $-stable.
\end{definition}
It follows that if two modules of the same dimension vector are $ R_{\theta} $-equivalent, they are $ S_{\theta} $-equivalent.
\begin{remark}
	Usually, the notions of (semi)stability and $ S $-equivalence are used without direct reference to the stability condition. But since we will be considering more than one stability condition, we use notation keeping track of the stability conditions where confusion is possible.
\end{remark}

The next result provides an alternative interpretation of quiver varieties, which we will be using throughout. (Recall that $ r $ is part of our definition of the quiver $ Q $.)
\begin{lemma}[{\cite[Prop. 8]{Kuznetsov07}, \cite[Proposition~5.3]{King94}}]\label{lem:quiverVarsAsModuliSpaces}
	There is an isomorphism
	\[\mathfrak{M}_\theta(r,\mathbf{v})\cong \mathcal{M}od_{\Pi}((1, \mathbf{v}), \theta)  \]
	where the right-hand space parametrises $ S_\theta $-equivalence classes of  $ \theta $-semistable $ \Pi $-modules of dimension $ (1, \mathbf{v}) $. As such it is a coarse moduli space of $ \theta $-semistable $ \Pi $-modules of dimension $ (1, \mathbf{v}) $. If $ \theta $ is generic, it is a {fine} moduli space.
\end{lemma}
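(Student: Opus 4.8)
The plan is to prove the isomorphism $\mathfrak{M}_\theta(r,\mathbf{v})\cong \mathcal{M}od_{\Pi}((1,\mathbf{v}),\theta)$ by combining King's GIT description of moduli of quiver representations with the identification of $\theta$-(semi)stability of $\Pi$-modules with the corresponding GIT (semi)stability of points in $\mu^{-1}(0)$. The essential point is that a $\Pi$-module of dimension $(1,\mathbf{v})$ is exactly a point of $\mu^{-1}(0)\subset M(v)$ (a $(Q,\mathcal J)$-representation), and the character $\chi(\theta)$ built from $\theta$ in \eqref{eq:defChi} is precisely the character whose GIT (semi)stability King translates into the representation-theoretic (semi)stability appearing in \cref{def:quiverVariety} and the preceding discussion. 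So the strategy reduces to checking that two a priori different notions of stability agree, and then invoking the general GIT machinery.

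First I would recall King's theorem \cite[Proposition~5.3]{King94}: for a finite-dimensional algebra and a fixed dimension vector, the GIT quotient of the representation space by $G(\mathbf v)/\C^\times$ with respect to the linearization $\chi(\theta)$ is a coarse moduli space for $\chi(\theta)$-semistable representations up to S-equivalence, and becomes a fine moduli space when semistability coincides with stability. The key lemma in King's work is that a representation is $\chi(\theta)$-semistable in the GIT sense if and only if every subrepresentation $U$ satisfies the numerical inequality $\theta(U)\le 0$ (with the normalization $\theta(M)=0$ for the whole module). This is exactly the definition of $\theta$-semistability given in \cref{sec:StabParams}, once one matches conventions: the condition $\theta(1,\mathbf v)=0$ defining $\Theta_{r,\mathbf v}$ is what makes the character trivial on the total module, which is the hypothesis under which King's equivalence holds. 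I would therefore spell out this dictionary carefully, checking signs against the convention fixed in \eqref{eq:defChi}.

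Second, I would apply this to $\Pi$ rather than to the free path algebra $\C Q$. Here the role of the moment-map equation $\mu^{-1}(0)$ is crucial: imposing $\mu(B)=0$ is precisely imposing that the preprojective relations \eqref{eqn:Pirelations} vanish, so $\mu^{-1}(0)$ is the affine variety of $(Q,\mathcal J)$-representations of dimension $v$, i.e.\ of $\Pi$-modules. Restricting King's construction from $M(v)$ to the closed $G(\mathbf v)$-invariant subscheme $\mu^{-1}(0)$ yields a moduli space of $\theta$-semistable $\Pi$-modules of dimension $(1,\mathbf v)$, with closed points the S$_\theta$-equivalence classes. Taking $\Proj$ of the appropriate ring of semi-invariants matches \cref{def:quiverVariety}, after passing to the reduced structure on both sides. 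The genericity statement then follows because, by definition, $\theta$ generic means every $\theta$-semistable module of dimension $(1,\mathbf v)$ is $\theta$-stable, so semistability equals stability and King's fine moduli criterion applies, giving stable orbits that are closed with trivial stabilizers modulo $\C^\times$.

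The main obstacle I anticipate is the bookkeeping around the moment map and the trace pairing: one must verify that the scheme-theoretic fiber $\mu^{-1}(0)$ really does cut out the preprojective relations as written in \eqref{eqn:Pirelations}, including that the identification $\mathfrak g\cong\mathfrak g\dual$ via the trace pairing sends the relation at vertex $i$ to the $i$-th component of $\mu$. A secondary subtlety is the quotient by the scalar $\C^\times$: since $\C^\times$ acts trivially on $M(v)$, one works with $G_{\mathbf v}=G(\mathbf v)/\C^\times$ throughout, and I would need to confirm that the character $\chi(\theta)$ descends to $G_{\mathbf v}$ precisely because of the normalization $\theta(1,\mathbf v)=0$ — this is where the framing vertex carrying dimension $1$ and the condition defining $\Theta_{r,\mathbf v}$ conspire to make everything well defined. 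Once these compatibilities are in place, the isomorphism is a direct citation of King and Kuznetsov, so the real content is the careful matching of conventions rather than any new geometric input.
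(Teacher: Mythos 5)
Your proposal takes essentially the same route as the paper, which offers no independent proof but simply cites \cite[Prop.~8]{Kuznetsov07} and \cite[Proposition~5.3]{King94}; your outline (identifying $\mu^{-1}(0)$ with the variety of $(Q,\mathcal J)$-representations, translating $\chi(\theta)$-GIT-(semi)stability into the module-theoretic notion, and using that $(1,\mathbf v)$ is indivisible so that generic $\theta$ yields a fine moduli space) is precisely the content of those citations. The only slip is the direction of the inequality in your statement of King's dictionary: with the convention of \cref{def:quiverVariety} and \eqref{eq:defChi}, a $\theta$-semistable module is one with no submodule $U$ satisfying $\theta(U)<0$, i.e.\ every submodule satisfies $\theta(U)\ge 0$ rather than $\theta(U)\le 0$ --- a point you rightly flag as needing a careful check of sign conventions.
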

This viewpoint was introduced in Kronheimer and Nakajima~\cite{KN90} and studied further by Nakajima~\cite{Nakajima94}.

We will make the following slight abuse of terminology: When writing statements like "a $ \theta $-stable $ \Pi $-module $ M\in \mathfrak{M}_{\theta}(r, \mathbf v) $," we mean that $ M $ is a stable $ \Pi $-module representing the isomorphism class of $ \Pi $-modules corresponding to a point $ x\in \mathfrak{M}_\theta(r, \mathbf v)(\C) $.
\subsection{Another construction of Nakajima quiver varieties}\label{sec:VaVaViewpoint}
There is another construction of the varieties $ \mathfrak{M}_\theta(r, \mathbf{v}) $, which we will make use of in \cref{sec:VaVa}. We follow the discussion in \cite{VaVa}.
Instead of constructing the quiver $ Q $, we could also construct a quiver $ Q' = (Q'_0, Q'_1) $, consisting of $ Q_{\Gamma} $ together with the framing vertex $ \infty $ and just two additional arrows: one $a\colon \infty \to 0$ and another $ \ol{a}\colon 0\to \infty $. Then the data of a representation $ R' $ of $ Q' $ such that $ \dim R'_\infty = r $ is equivalent to that of a representation $ R $ of $ Q $ such that $ \dim r_\infty = 1 $. 

The McKay quiver $ Q_\Gamma $ is constructed by taking for each vertex an irreducible representation of $ \Gamma $, and an arrow $ \rho_i\to \rho_j $ if $ \rho_j\subset L\otimes \rho_i $. This implies that the data of an arbitrary $ \Gamma $-representation $ V $ together with a $ \Gamma $-equivariant map $ B\colon V\to V\otimes L $ is equivalent to that of a $ Q_\Gamma $-representation. 
In particular, choose an $ r $-dimensional vector space $ W $ on which $ \Gamma $ acts trivially, and let $ V = \bigoplus_{i\in Q_{\Gamma, 0}} \C^{\mathbf{v}_i}\otimes \rho_i $.
Then the data of an element  $ x\in \mu^{-1}(0)\subset M(v)= M(\rho_\infty + \mathbf v) $ can be written as a tuple \[x = (i_x,j_x,B_x) ,\]
where \[ i_x\colon W\to \bigwedge^2L\otimes V,\quad j_x\colon V\to W,\quad B_x\colon V\to L\otimes V \] are $ \Gamma $-equivariant maps. 

If we let $ M(r, \mathbf{v}) $ be the space of such tuples for a fixed dimension vector $ (r, \mathbf{v}) $, the moment map $ \mu $ can be replaced with the map \[ \mu'\colon M'(v)\to \Hom(V, \wedge^2 L\otimes W) ,\quad (i_x, j_x, B_x)\mapsto (B_x\wedge B_x-i_x\circ j_x).\]
As before, $ M'((r, \mathbf{v})) $ carries a natural action by the group $ G_{\mathbf v} $, and the quiver variety $ \mathfrak{M}_\theta(r,\mathbf{v}) $ can also be defined as the GIT quotient of $ \mu'^{-1}(0) $ by this action:
\[ \mathfrak{M}_\theta(r,\mathbf{v})= \mu'^{-1}(0)\gitquot_{\chi(\theta)}G_\mathbf{v}.\]

Given a $ Q' $-representation $ M $, $M\in \mu^{-1}(0) $ if and only if a similar ideal of preprojective relations acts by $ 0 $ on $ M $. To reduce notational clutter, we shall also write this ideal as $ \mathcal J $, and similarly speak of $ (Q',\mathcal J) $-representations.
Since we shall have to work with $ Q' $ -representations of different dimension vectors, it will be convenient to refer explicitly to the $ \Gamma $-representations $ V, W $. For this reason, we shall write $ x = (V_x, W_x, i_x, j_x, B_x) $. 

The construction here described was the original definition of Nakajima quiver varieties (\cite{Nakajima94}). Our definition using $ 2r $ framing arrows to set $ \dim M_\infty =1$ for a $ \Pi $-module $ M $ is usually called the \emph{Crawley-Boevey trick}, first explicitly noted in \cite{CrawBoe}.

\subsection{Stability parameters}\label{sec:stabParams}\label{subsec:wallandcham}
We now go back to the description of $ \mathfrak{M}_\theta(r, \mathbf v) $ from \cref{sec:StabParams} as a moduli space for $ \theta $-semistable $ \Pi $-modules.

The set of stability conditions $\Theta_{r, \mathbf v}$ admits a preorder $\geq$, where $\theta\geq \theta'$ if and only if every $\theta$-semistable $\Pi$-module is $\theta'$-semistable. This determines a wall-and-chamber structure (\cite{DH98, Thaddeus96}), where
$\theta,\theta' \in \Theta_{r, \mathbf v}$ lie in the relative interior of the same cone if and only if both $\theta \geq \theta'$ and $\theta' \geq\theta$, implying that $\mathfrak{M}_{\theta}(r,\mathbf v)\cong\mathfrak{M}_{\theta'}(r,\mathbf v)$. The interiors of the top-dimensional cones in $\Theta_{r, \mathbf v}$ are GIT chambers, and the codimension-one faces of the closure of a GIT chamber are GIT walls. If a stability parameter $\theta \in \Theta_{r, \mathbf v}$ lies in a GIT chamber -- equivalently, if any $ \theta $-semistable $ \Pi $-module of dimension $ (1, \mathbf v) $ is $ \theta $-stable -- we say that $ \theta $ is \emph{generic}.

The vector $(1,\mathbf v)$ is indivisible, so \cite[Proposition~5.3]{King94} implies that for any generic $\theta \in \Theta_{r, \mathbf v}$ the quiver variety $\mathfrak{M}_{\theta}(r,\mathbf v)$ is the fine moduli space of isomorphism classes of $\theta$-stable $\Pi$-modules of dimension vector $(1,\mathbf v)$.

Let us write \[ C^+_{r,\mathbf{v}} = \{\theta\in \Theta_{r, \mathbf v}\mid\theta(\rho_i)>0\textrm{ for all $ i $.}\} \]  This may not in general be a chamber in the wall-and-chamber structure on $ \Theta_{r, \mathbf v} $, but it is contained in one, and if $ \mathbf{v}= n\delta $, it is a genuine chamber -- see \cite[Example~2.1]{BelCra} or \cite[Section 2.8]{Nak_branching} for the case $ r=1 $, and \cite[Theorem 4.15]{BellamyCrawSchedler} for the general case. 

\emph{From now on}, for any choice of $ r$ and  $ \mathbf v $, we will denote by $ \theta $  any stability parameter in the cone $ C^+_{r,\mathbf{v}} $. We will use the phrases `stable module' and `$ \theta $-stable module' equivalently. We will often leave the framing rank and dimension vector implicit and simply write $ C^+ = C^+_{r, \mathbf v} $.

\begin{remark}\label{rmk: stabilityForGreps}
	We can translate this into a notion of stability for $ Q' $-representations. Let $M = (W, V, i, j, B)  \in \mu^{-1}(0) $ be a $ Q' $-representation. Then $ M $ is stable if there is no strict subspace $ U\subsetneq V$ such that $ i(W)\subset U $ and $ B(U)\subset L\otimes U $.
\end{remark}

In addition to the chamber $ C^+ $, we shall focus on a particular stability parameter in is boundary, namely \[ \theta_0 = (-n, 1, 0, 0 \dots, 0) ,\]
i.e., $ \theta_0(\rho_\infty) = -n,\ \theta_0(\rho_0)=1,\ \theta_0(\rho_i)=0 $ for other $ i $, where $ n=\mathbf v_0 $.
We fix this parameter for the rest of the paper.

Whether $ C^{+}_{r, \mathbf v} $ is a chamber or not, there is always a morphism $ \mathfrak{M}_\theta(r, \mathbf{v})\to \mathfrak{M}_{\theta_0}(r, \mathbf{v}) $ induced by variation of GIT parameter. 
We shall need some results about it in a special case:

\begin{lemma}\label{lem:VGITsurjectivity}
Let $ r, n\in \N $. Then 
\begin{enumerate}
	
	\item $\dim \mathfrak{M}_{\theta}(r,n\delta)=\dim \mathfrak M_{\theta_0}(r, n\delta)=2rn $,
	\item 
	the projective morphism $ \mathfrak{M}_{\theta}(r,n\delta)\to  \mathfrak M_{\theta_0}(r, n\delta)  $
	induced by varying the parameter $ \theta $ to $ \theta_0 $ is a resolution of singularities,
\end{enumerate} 
\end{lemma}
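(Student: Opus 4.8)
The plan is to prove the two assertions together, extracting the equality of dimensions in~(1) as a consequence of the fact that the variation-of-GIT morphism in~(2) is birational with smooth source. Throughout I use the moduli interpretation of \cref{lem:quiverVarsAsModuliSpaces} and the geometric input of \cref{lem:Mthetageometry}.

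\textbf{Dimension of the generic quiver variety.} First I would compute $\dim\mathfrak{M}_\theta(r,n\delta)$ for $\theta\in C^+$ via the standard dimension formula for moduli of $\theta$-stable modules over the preprojective algebra $\Pi$. Since $C^+$ is a genuine chamber when $\mathbf v=n\delta$ and $(1,n\delta)$ is indivisible, \cref{lem:quiverVarsAsModuliSpaces} shows $\mathfrak{M}_\theta(r,n\delta)$ is a fine moduli space of $\theta$-stable $\Pi$-modules; for generic $\theta$ the stable locus of $\mu^{-1}(0)$ is smooth with free $G_{\mathbf v}$-action, so the quotient is smooth of dimension $2-(v,v)$, where $(\cdot,\cdot)$ is the symmetric Tits form of the framed quiver $Q$ and $v=\rho_\infty+n\delta$. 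Using that $\delta$ is the minimal imaginary root, so $(\delta,\delta)=0$; that the framing vertex $\infty$ meets only vertex $0$, by $r$ edges, so $(\rho_\infty,\delta)=\delta_0\cdot(-r)=-r$; and that $(\rho_\infty,\rho_\infty)=2$, I obtain $(v,v)=2+2n(-r)+n^2\cdot 0=2-2rn$, whence $\dim\mathfrak{M}_\theta(r,n\delta)=2-(v,v)=2rn$.

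\textbf{The morphism and birationality.} The morphism $\pi\colon\mathfrak{M}_\theta(r,n\delta)\to\mathfrak{M}_{\theta_0}(r,n\delta)$ is projective, as already noted. To prove it is birational I would compare the two stability conditions on $\Pi$-modules of dimension $(1,n\delta)$. For $\theta\in C^+$ every weight $\theta(\rho_i)$ with $i\in Q_{\Gamma,0}$ is positive while $\theta(\rho_\infty)<0$; since $\theta(M)=0$, a proper submodule $U$ with $U_\infty\neq 0$ would force $\theta(M/U)=\sum_i\theta(\rho_i)\dim(M/U)_i\geq 0$ and hence $\theta(U)\leq 0$, contradicting stability. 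Thus a $\theta$-stable $M$ is generated by its framing, which is exactly the description in \cref{rmk: stabilityForGreps}. Comparing with $\theta_0=(-n,1,0,\dots,0)$, the only submodules on which $\theta_0$ can fail to be strictly positive are those supported on $Q_{\Gamma,0}\setminus\{0\}$, where $\theta_0$ vanishes. I would then let $U\subseteq\mathfrak{M}_{\theta_0}(r,n\delta)$ be the open locus of $\theta_0$-stable points: over $U$ each fibre of $\pi$ is a single $\theta$-stable module, so $\pi$ restricts to an isomorphism onto $U$. It remains to see $U$ is dense, equivalently that the generic $\theta$-stable module has no nonzero submodule supported away from vertex $0$; granting this, $\pi$ is birational.

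\textbf{Conclusion.} Since $\mathfrak{M}_\theta(r,n\delta)$ and $\mathfrak{M}_{\theta_0}(r,n\delta)$ are both irreducible by \cref{lem:Mthetageometry}, birationality of the projective morphism $\pi$ forces $\dim\mathfrak{M}_{\theta_0}(r,n\delta)=\dim\mathfrak{M}_\theta(r,n\delta)=2rn$, which is~(1). For~(2), the source $\mathfrak{M}_\theta(r,n\delta)$ is smooth (generic $\theta$, indivisible dimension vector), $\pi$ is projective hence proper, and birational, while the target is normal and irreducible by \cref{lem:Mthetageometry}; together these say precisely that $\pi$ is a resolution of singularities.

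\textbf{Main obstacle.} The delicate point is the density of the common stable locus in the birationality step: one must show that the generic $\theta$-stable $\Pi$-module of dimension $(1,n\delta)$ is also $\theta_0$-stable, i.e.\ admits no nonzero submodule supported on $Q_{\Gamma,0}\setminus\{0\}$. This is where genuine information about the wall-and-chamber structure (as in \cite{BelCra,BellamyCrawSchedler}) enters, and I would sanity-check it against $r=n=1$, where $\pi$ specialises to the minimal resolution $\widehat{\C^2/\Gamma}\to\C^2/\Gamma$ and the claim is classical.
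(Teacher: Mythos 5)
Your overall architecture is sound and matches the paper's: establish that the $\theta_0$-stable locus is dense, deduce birationality of the projective VGIT morphism from a smooth source onto a normal irreducible target, and read off the dimension. Your Tits-form computation $\dim\mathfrak{M}_\theta(r,n\delta)=2-(v,v)=2rn$ is correct and is in fact more explicit than the paper, which simply cites general results of Nakajima for smoothness and dimension. However, there is a genuine gap: the step you yourself flag as the ``main obstacle'' --- showing that the generic $\theta$-stable $\Pi$-module of dimension $(1,n\delta)$ is also $\theta_0$-stable, equivalently that $\mathfrak{M}_{\theta_0}^s\neq\emptyset$ --- is precisely the content of the lemma and is left unproven in your proposal. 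Appealing vaguely to the wall-and-chamber structure of \cite{BelCra,BellamyCrawSchedler} does not discharge it: those results describe the walls but you still need to exhibit a point avoiding them, and the $r=n=1$ sanity check does not generalise on its own.

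The paper closes this gap by an explicit construction. Using Nakajima's decomposition of the affine quotient $\mathfrak{M}_0(r,\mathbf v)$ into strata $\mathfrak{M}_0^s(r,\mathbf v')\times(\operatorname{Sym}^{m|\Gamma|}_\lambda\C^2)^\Gamma$ (\cref{lem:NakHigherRankDesc}), one takes $S$ to be $n$ distinct free $\Gamma$-orbits in $\C^2\setminus\{o\}$ and considers the framed sheaf $\shf E=\shf I_S\oplus\OO_{\P^2}^{\oplus r-1}\in X_{r,n\delta}$. Its image in $\mathfrak{M}_0$ corresponds to a semisimple $\Pi$-module $N_\infty\oplus N_1\oplus\dots\oplus N_n$ with each $N_i$ simple of dimension vector $(0,\delta)$ and no vertex simple summand away from $\infty$; since a failure of $\theta_0$-stability would force vertex simple summands supported on $Q_{\Gamma,0}\setminus\{0\}$ in the associated polystable module (\cref{cor:dimConcentratedModule}, \cref{rem:vertex_simples}), and these would persist in the image in $\mathfrak{M}_0$, the module corresponding to $\shf E$ must be $\theta_0$-stable. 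You would need to supply this (or an equivalent) witness to complete your argument; the remainder of your proof then goes through as written.
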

We postpone the complete proof to \cref{sec:surjecProof}, as we shall need explicit interpretations of the quiver varieties for it. There we also give a more primitive proof that $ C^+_{r, n\delta} $ is a chamber.

\subsection{The concentrated module}

We now introduce the \emph{concentrated module} of an $R_{\theta_0} $-equivalence class, which we will see uniquely characterises the class.

Let $ [M] $ be an $ S $-equivalence class of $ \theta_0 $-semistable $ \Pi $-modules of dimension vector $ (1,\mathbf{v}) $.

Then $ [M] $ has a $ \theta_0 $-polystable member $ \tilde{M} $, which is unique up to isomorphism.
\begin{definition}\label{def:concentratedModule}
The \emph{concentrated module (associated to $ [M] $)} is the unique summand $ M\con $ of $ \tilde{M} $ such that $ \dim_\infty M\con =1 $.
\end{definition}

\begin{corollary}\label{cor:dimConcentratedModule}
We have $ \dim_0 M\con= \mathbf v_0 $.
\end{corollary}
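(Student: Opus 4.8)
The plan is to reduce the claim to a single numerical identity: the concentrated module $M\con$ necessarily has $\theta_0$-degree zero, and the very explicit form of $\theta_0$ then forces the value of $\dim_0 M\con$.

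First I would record that the whole module has $\theta_0$-degree zero. Since $\dim M = (1, \mathbf v)$ and $\theta_0 = (-n, 1, 0, \dots, 0)$ with $n = \mathbf v_0$, we compute $\theta_0(M) = \theta_0(1, \mathbf v) = -n + \mathbf v_0 = 0$. Because $S_{\theta_0}$-equivalence preserves the dimension vector, the polystable representative $\tilde M$ also has dimension $(1, \mathbf v)$; in particular $\dim_\infty \tilde M = 1$, which is what guarantees that there is indeed a \emph{unique} summand $M\con$ with $\dim_\infty M\con = 1$, as the definition asserts, and that $\theta_0(\tilde M) = 0$.

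The heart of the argument is to show $\theta_0(M\con) = 0$. The module $\tilde M$ is $\theta_0$-polystable, hence $\theta_0$-semistable of total $\theta_0$-degree $0$. Writing $\tilde M = M\con \oplus M'$, the summand $M\con$ is simultaneously a submodule and a quotient of $\tilde M$; applying semistability to the two submodules $M\con$ and $M'$ gives $\theta_0(M\con) \ge 0$ and $\theta_0(M') \ge 0$, while $\theta_0(M\con) + \theta_0(M') = \theta_0(\tilde M) = 0$ forces both to vanish. (This is just the standard fact that every stable summand of a semistable module of degree $0$ again has degree $0$.)

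Finally I would unwind $\theta_0(M\con) = 0$ using $\dim_\infty M\con = 1$ and the explicit $\theta_0$:
\[
\theta_0(M\con) = -n\cdot 1 + 1\cdot \dim_0 M\con + \sum_{i \ne 0} 0\cdot \dim_i M\con = -n + \dim_0 M\con,
\]
so $\dim_0 M\con = n = \mathbf v_0$, as claimed. I do not anticipate a real obstacle here; the only point requiring care is the assertion that $M\con$ has degree \emph{exactly} $0$ rather than merely $\ge 0$, which is precisely where one exploits that $M\con$ is a direct summand — hence also a quotient — of a semistable module of total degree $0$, not merely a submodule.
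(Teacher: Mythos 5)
Your proof is correct and follows essentially the same route as the paper: both arguments reduce the claim to the fact that every stable summand of the $\theta_0$-polystable representative $\tilde M$ has $\theta_0$-degree zero, and then read off $\dim_0 M\con = n$ from the explicit form of $\theta_0$ together with $\dim_\infty M\con = 1$. The only difference is that you spell out the submodule/quotient argument for why each summand has degree exactly zero, which the paper simply asserts.
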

\begin{proof}
Since every summand $ N $ of $ \tilde{M} $ satisfies $ \theta_0(N)=0 $, the same must hold for $ M\con $. Since $ \dim_\infty M\con = 1 $, $ \theta_0(\dim M\con)=0 $, and $ \theta_{0}(\rho_i)>0 $ precisely for $ i=0$, we must have $ \dim_0 M\con= \mathbf{v}_0$.
\end{proof}

\begin{remark}\label{rem:vertex_simples}
In a $ \theta_0 $-polystable module, all summands other than the concentrated module are 1-dimensional vertex simples. To see this, note that removing the vertices $\infty, 0$ and their incident edges from the framed McKay graph leaves us with an finite-type Dynkin diagram. 

Then, since a stable summand $N\ne M\con $ must satisfy $ \dim_i N=0 $ for $ i\in{\infty, 0} $, we can identify it with a simple module of the preprojective algebra of a quiver of finite type. But such modules are one-dimensional by \cite[Lemma~2.2]{SavTing}.
\end{remark}

We move from $ S $-equivalence to $ R $-equivalence:
\begin{lemma}
Let $ M,N $ be two $ R_{\theta_0} $-equivalent $ \Pi $-modules. The concentrated modules of their $ S_{\theta_0} $-equivalence classes are isomorphic.
\end{lemma}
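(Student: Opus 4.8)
The plan is to unwind the definition of $R_{\theta_0}$-equivalence and show that the extra vertex-simple summands $M', N'$ that appear there do not interfere with the concentrated module. Suppose $M$ and $N$ are $R_{\theta_0}$-equivalent, and let $\tilde M, \tilde N$ be the $\theta_0$-polystable modules $S_{\theta_0}$-equivalent to $M, N$ respectively. By \cref{def:Tequivalence}, there exist $\Pi$-modules $M', N'$, each a direct sum of vertex simple modules supported only on vertices in $I = Q_{\Gamma,0}\setminus\{0\}$ (since $\theta_0(\rho_i)=0$ exactly for $i\neq 0$), such that $\tilde M\oplus M'\simeq \tilde N\oplus N'$.

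First I would decompose both sides into $\theta_0$-stable summands and apply the Krull--Schmidt theorem, which holds for finite-dimensional $\Pi$-modules. By \cref{rem:vertex_simples}, each of $\tilde M$ and $\tilde N$ is a direct sum of its concentrated module (the unique summand with $\dim_\infty = 1$) together with one-dimensional vertex simples supported on $Q_{\Gamma,0}\setminus\{0\}$. Since $M'$ and $N'$ consist only of such vertex simples, both sides of the isomorphism $\tilde M\oplus M'\simeq \tilde N\oplus N'$ decompose as a single summand with $\dim_\infty = 1$ (namely $M\con$ on the left, $N\con$ on the right) plus a collection of one-dimensional vertex simples with $\dim_\infty = 0$.

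The key observation is that the concentrated module is the \emph{only} indecomposable summand on either side with $\dim_\infty = 1$: the vertex simples contributed by $M', N'$ and by the polystable decompositions all have $\dim_\infty = 0$. By Krull--Schmidt, the multiset of indecomposable summands on the left equals that on the right, so in particular the unique summand with $\dim_\infty = 1$ on the left must be isomorphic to the unique such summand on the right. Hence $M\con \simeq N\con$.

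I expect the only point requiring care is justifying the uniqueness of Krull--Schmidt decompositions in this setting, but this is standard for finite-dimensional modules over a (possibly noncommutative) finite-dimensional algebra, and the relevant summands are genuinely finite-dimensional since all dimension vectors involved are finite. A minor bookkeeping point is to confirm that the vertex simples appearing in $M', N'$ really do have $\dim_\infty = 0$ — this follows because they are supported on $I\subset Q_{\Gamma,0}$, which excludes $\infty$ by definition. With that in hand the matching of the $\dim_\infty = 1$ summand is immediate, so the proof is short and the statement follows directly.
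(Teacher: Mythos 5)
Your proposal is correct and follows essentially the same route as the paper, which simply asserts that the claim is immediate from \cref{def:Tequivalence} and \cref{rem:vertex_simples}; your write-up supplies the details the paper omits (Krull--Schmidt matching of indecomposable summands, with the concentrated module singled out as the unique summand having $\dim_\infty=1$). The only implicit point worth noting is that $M\con$ is indecomposable because it is $\theta_0$-stable, which is what lets Krull--Schmidt identify it across the two sides.
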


\begin{proof}
This immediately follows from \cref{def:Tequivalence} and \cref{rem:vertex_simples}.

\end{proof}

It follows that every $ R_{\theta_0} $-equivalence class is characterised by a concentrated module, which is unique up to isomorphism. With a small abuse of language, we will often speak of \emph{the} concentrated module. 
\begin{corollary}
The concentrated module of an $ R_{\theta_0} $-equivalence class is both stable and $ \theta_0 $-stable.
\end{corollary}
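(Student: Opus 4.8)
The plan is to establish the two assertions in turn: $\theta_0$-stability comes almost directly from the construction, and stability for $\theta\in C^+$ is then deduced from it using only the dimension count in \cref{cor:dimConcentratedModule}.

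First I would verify that $M\con$ is $\theta_0$-stable. By definition $M\con$ is the summand of the $\theta_0$-polystable module $\tilde M$ with $\dim_\infty M\con = 1$. Decompose $\tilde M = \bigoplus_k S_k$ into $\theta_0$-stable summands; since $\dim_\infty \tilde M = 1$, exactly one factor $S_{k_0}$ has $\dim_\infty S_{k_0} = 1$, and by \cref{rem:vertex_simples} every other $S_k$ is a one-dimensional vertex simple. Hence $M\con = S_{k_0}$ is itself a $\theta_0$-stable summand, so it is $\theta_0$-stable.

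Next I would prove stability for $\theta\in C^+$ by showing that every proper nonzero submodule $U\subsetneq M\con$ satisfies $\theta(U)>0$. Split into two cases according to $\dim_\infty U$. If $\dim_\infty U = 0$, then $\theta(U)=\sum_{i\in Q_{\Gamma,0}}\theta_i\dim_i U>0$, since each $\theta_i>0$ and $U\ne 0$, so $U$ is harmless. If $\dim_\infty U = 1$, then because $\dim_0 U\le \dim_0 M\con = n$ by \cref{cor:dimConcentratedModule}, we obtain $\theta_0(U)=-n\dim_\infty U+\dim_0 U=\dim_0 U - n\le 0$, contradicting the $\theta_0$-stability established above; hence no such $U$ exists. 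Therefore $M\con$ has no proper submodule containing the framing, every proper nonzero submodule has strictly positive $\theta$-value, and $M\con$ is $\theta$-stable.

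The only genuinely delicate point is the first step, namely ensuring that $M\con$ really is one of the indecomposable $\theta_0$-stable summands of $\tilde M$ rather than an arbitrary direct factor; this is precisely what \cref{rem:vertex_simples} supplies. After that, the heart of the argument is the short numerical observation that $\dim_0 M\con$ is already as large as possible, so that any submodule containing the framing would be forced to have non-positive $\theta_0$-value and thus could not coexist with $\theta_0$-stability.
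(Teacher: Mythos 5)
Your argument is correct: $\theta_0$-stability is immediate from the definition of $M\con$ as the unique $\theta_0$-stable summand of $\tilde M$ with $\dim_\infty=1$, and your case split on $\dim_\infty U$ combined with $\dim_0 M\con = n$ from \cref{cor:dimConcentratedModule} correctly rules out destabilising submodules for $\theta\in C^+$. This is precisely the argument the paper leaves implicit (its proof is just ``Clear.''), so nothing further is needed.
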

\begin{proof}
Clear.
\end{proof}

A reason for introducing the notion of concentrated module is the following result:

\begin{lemma}\label{lem: concentrated is a quotient}
Let $ M $ be a $ \theta $-stable $ \Pi $-module. Consider $ M $ as a $ \theta_0 $-semistable module, and let $ M\con $ be the concentrated representation of its $ R_{\theta_0} $-equivalence class.
Then $ M\con $ is a quotient of $ M $.
\end{lemma}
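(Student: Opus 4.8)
The plan is to exhibit an explicit submodule $K\subseteq M$ with $M/K\cong M\con$. The choice of $K$ is dictated by a dimension count: since $\dim_\infty M\con=1=\dim_\infty M$ and $\dim_0 M\con=\mathbf v_0=\dim_0 M$ (the latter by \cref{cor:dimConcentratedModule}), and since $\tilde M\cong M\con\oplus\bigoplus_i S_i^{\oplus m_i}$ is a sum of $M\con$ and vertex simples $S_i$ supported at interior vertices $i\in Q_{\Gamma,0}\setminus\{0\}$ (by \cref{rem:vertex_simples}), the ``missing'' dimension vector $\dim M-\dim M\con=\sum_i m_i\dim S_i$ is supported entirely on the interior vertices. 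This strongly suggests taking $K$ to be the part of $M$ living on those vertices. Concretely I would set $K=\sum\{U\subseteq M\mid \dim_\infty U=\dim_0 U=0\}$, which is the unique \emph{maximal} submodule of $M$ with $\dim_\infty K=\dim_0 K=0$ (the sum of two such submodules again has vanishing $\infty$- and $0$-components). Writing $Q\coloneqq M/K$, one immediately gets $\dim_\infty Q=1$, $\dim_0 Q=\mathbf v_0$, and $\theta_0(Q)=\theta_0(M)-\theta_0(K)=0$, since $\theta_0(\rho_i)=0$ for every interior $i$ forces $\theta_0(K)=0$.

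\textbf{Key step: $Q$ is $\theta_0$-stable.}
This is the heart of the argument, and the only place where $\theta$-stability (rather than mere $\theta_0$-semistability) is used. Let $U=\tilde U/K$ be a proper nonzero submodule of $Q$, so $K\subsetneq \tilde U\subsetneq M$. Since $M$ is $\theta_0$-semistable we have $\theta_0(U)=\theta_0(\tilde U)\ge 0$, and I want strictness. Suppose $\theta_0(\tilde U)=0$. Because $\dim_\infty M=1$, there are two cases. If $\dim_\infty\tilde U=0$, then $\theta_0(\tilde U)=\dim_0\tilde U=0$ gives $\dim_0\tilde U=0$, so $\tilde U$ is one of the submodules defining $K$, whence $\tilde U\subseteq K$ and $U=0$, a contradiction. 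If $\dim_\infty\tilde U=1$, consider the proper quotient $M/\tilde U$: it has $\dim_\infty=0$, so $\theta(M/\tilde U)=\sum_{i\in Q_{\Gamma,0}}\theta(\rho_i)\dim_i(M/\tilde U)\ge 0$ because $\theta\in C^+$ makes every $\theta(\rho_i)>0$; but $\theta$-stability of $M$ applied to the proper nonzero submodule $\tilde U$ gives $\theta(M/\tilde U)=-\theta(\tilde U)<0$, a contradiction. Hence $\theta_0(U)>0$ for every proper nonzero $U$, so $Q$ is $\theta_0$-stable.

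\textbf{Identification $Q\cong M\con$.}
Since $Q$ is $\theta_0$-stable with $\dim_\infty Q=1$, it is not a vertex simple. The submodule $K$ is supported on the interior vertices, so by \cref{rem:vertex_simples} every $\theta_0$-stable subquotient of $K$ is a vertex simple; refining $0\subseteq K\subseteq M$ by a $\theta_0$-Jordan--Hölder filtration of $K$ therefore yields a $\theta_0$-Jordan--Hölder filtration of $M$ whose top factor is $Q$ and all of whose remaining factors are vertex simples. Its associated graded $Q\oplus\bigoplus_i S_i^{\oplus m_i}$ is $\theta_0$-polystable and $S_{\theta_0}$-equivalent to $M$, so by uniqueness of the polystable representative $\tilde M$ (up to isomorphism) together with Krull--Schmidt, the unique summand with $\dim_\infty=1$ is $Q$; that is, $Q\cong M\con$. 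This gives the desired surjection $M\twoheadrightarrow M/K=Q\cong M\con$.

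\textbf{Main obstacle.}
The delicate point is not that $M\con$ occurs among the $\theta_0$-stable factors of $M$ (which is essentially its definition), but that it occurs \emph{on top}, i.e.\ as a quotient rather than an arbitrary subquotient. This is precisely what $\theta$-stability buys us: in the dichotomy above it is the computation $\theta(M/\tilde U)\ge 0$ versus $\theta(M/\tilde U)<0$ that rules out any proper submodule carrying the framing vertex $\infty$ (equivalently, $M$ is generated by its framing, cf.\ \cref{rmk: stabilityForGreps}). Under $\theta_0$-semistability alone this fails, so the correct formulation and verification of $K$ as the maximal interior-supported submodule, and the stability of $M/K$, is where the real work lies.
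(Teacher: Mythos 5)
Your proof is correct. The central mechanism is the same one the paper uses — namely that $\theta$-stability of $M$ forbids any proper submodule $N\subsetneq M$ with $\dim_\infty N=1$, since such an $N$ would satisfy $\theta(N)=\sum_{i\in Q_{\Gamma,0}}\theta(\rho_i)(\dim_i N-\mathbf v_i)<0$ — but the architecture around it is genuinely different. The paper's proof is two lines: $M\con$ is by definition a subquotient $N/N'$ of $M$ (it is a Jordan--H\"older factor of the filtration producing $\tilde M$), so $\dim_\infty N\ge\dim_\infty M\con=1$ forces $\dim_\infty N=1$, and the stability computation then forces $N=M$. You instead construct the kernel explicitly as the maximal submodule $K$ with $\dim_\infty K=\dim_0 K=0$, verify directly that $M/K$ is $\theta_0$-stable (your two-case analysis is sound: the case $\dim_\infty\tilde U=0$ pushes $\tilde U$ into $K$, and the case $\dim_\infty\tilde U=1$ is exactly the paper's stability contradiction), and then identify $M/K$ with $M\con$ via uniqueness of the $\theta_0$-polystable representative. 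What your version buys is an explicit description of the kernel of the surjection $M\twoheadrightarrow M\con$ and a self-contained re-derivation of the polystable decomposition $\tilde M\cong (M/K)\oplus\bigoplus_i S_i^{\oplus m_i}$, at the cost of considerably more bookkeeping; the paper's version buys brevity by leaning on the definition of $M\con$ as a subquotient and only checking which submodule it is a quotient of. Your closing remark correctly isolates the one place where $\theta$-stability (as opposed to $\theta_0$-semistability) enters.
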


\begin{proof}
Note that $ M\con $ is, by definition, a subquotient of $ M $. If $ M\con $ is a quotient of a strict submodule $ N\subsetneq M $, we must have $ \dim_\infty N = \dim_\infty M=\dim_\infty M\con =1 $. But then $ \theta(N)<0 $, contradicting stability of $ M $.
\end{proof}

\subsection{Stacks, quotients and descent}
In this subsection, we collect the necessary background for working with the stack compactification of $ \C^2/\Gamma $.

\begin{definition}[{\cite[Definition 11.1.1]{Olsson}}]\label{def:CoarseModSpace}
	Given a Deligne-Mumford stack $ \mathcal{X} $, a \emph{coarse moduli space} for $ \mathcal{X} $ is a morphism  $t\colon \mathcal X\to X $, where $ X $ is an algebraic space, such that 
	\begin{itemize}
		\item any other morphism from $ \mathcal X $ to an algebraic space factors through $ t $,
		\item for every algebraically closed field $ k $, the map of sets $ |\mathcal{X}|(k)\to X(k) $ induced by $ t $ is bijective, where $ |\mathcal{X}| $ is the set of isomorphism classes of morphisms $ \Spec k\to \mathcal X $.
	\end{itemize} 
If the coarse moduli space is a scheme, we call it a coarse moduli scheme.
\end{definition}

\begin{lemma}[{\cite[Proposition 11.3.4]{Olsson}}]\label{lem:TameStackCohom}
	Let $ \mathcal X $ be a tame locally finite Deligne-Mumford stack with finite diagonal, and with a coarse moduli space $f\colon \mathcal X\to X $. Then the functor $ f_* $ is exact. 
\end{lemma}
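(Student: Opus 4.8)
The plan is to reduce to an explicit local model over the coarse space and then invoke the classical fact that taking invariants under a finite group is exact in characteristic zero.

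First I would note that exactness of $f_*$ can be checked étale-locally on $X$: a complex of $\OO_{\mathcal X}$-modules is exact if and only if its pullback to an étale cover is, and $f_*$ commutes with flat---hence étale---base change. Here I use that for a tame stack the formation of the coarse moduli space commutes with flat base change, so that if $X' \to X$ is étale and $\mathcal X' = \mathcal X \times_X X'$, then $X'$ is the coarse space of $\mathcal X'$ and the induced morphism $\mathcal X' \to X'$ is again a coarse moduli map. This allows me to replace $X$ by an étale neighbourhood of an arbitrary chosen point.

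Next I would invoke the local structure theorem for Deligne--Mumford stacks with finite inertia (which here applies since finite diagonal gives finite inertia, and over $\C$ tameness is automatic): étale-locally on $X$ there is an isomorphism $\mathcal X \cong [\Spec A / G]$ for a finite group $G$ (a stabilizer) acting on an affine scheme $\Spec A$, under which $f$ becomes the canonical map $[\Spec A / G] \to \Spec A^G = X$. Under this identification, the category $\QCoh{\mathcal X}$ is the category of $G$-equivariant $A$-modules, and $f_*$ becomes the invariants functor $M \mapsto M^G$, viewed as an $A^G$-module. It then remains to show that $M \mapsto M^G$ is exact. This is where characteristic zero enters: since $|G|$ is invertible in $\C$, the averaging operator $e = \frac{1}{|G|}\sum_{g \in G} g$ is a functorial idempotent projecting each $G$-module onto its invariants. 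Thus the invariants functor is a direct summand of the exact forgetful functor and is therefore exact; equivalently, any invariant element of a quotient module lifts to an invariant element upstairs by averaging an arbitrary lift.

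The main obstacle is the local structure theorem of the second step; once the presentation $\mathcal X \simeq [\Spec A / G]$ is available étale-locally over $X$, the remaining steps are formal. Since this structure result---and indeed the entire statement---is precisely \cite[Proposition 11.3.4]{Olsson}, in practice I would either cite it directly or reconstruct it from the Keel--Mori theorem together with the étale-slice description of the coarse space.
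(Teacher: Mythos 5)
Your proposal is correct and follows the standard argument (étale-local presentation $[\Spec A/G]$ over the coarse space, identification of $f_*$ with $G$-invariants, exactness of invariants via the averaging idempotent in characteristic zero), which is precisely the proof of the result the paper cites: the paper itself gives no argument beyond the reference to \cite[Proposition 11.3.4]{Olsson}, so your reconstruction matches the intended proof.
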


In some cases scheme-theoretic quotients by finite groups agree with the stack-theoretic quotients. In these cases, we can give an alternative description of how sheaves descend:

\begin{proposition}\label{thm:descentWhenScheme}
	Let $ G $ be a finite group acting on a scheme $ X $, such that the orbit of every closed point is contained in an open affine subscheme\footnote{Note that there are schemes acted on by finite groups such that there is at least one orbit of a closed point not contained in an affine subscheme: In the line with double origin, with an action of $ \Z/2\Z $ interchanging the origins, the union of the two origins does not lie in an affine subscheme. For an example of a complete variety also satisfying this property, see \cite{Speyer}.}. 
	
	Then:
	\begin{enumerate}
		\item There is a finite $ G $-invariant morphism $q\colon X\to X/G $ which is a categorical quotient, i.e. with the universal property that any $ G $-invariant morphism $ X\to Y $ of schemes factors uniquely through $ X/G $.
		The structure sheaf of $ X/G $ satisfies \[ \OO_{X/G}(U)=(\OO_{X}(U))^G \] for any open $ U\subset X/G $.
		
		\item If we also assume that the action of $ G $ is free, the stack quotient $[X/G] $ is represented by $ X/G $. Furthermore, $ q $ is a flat morphism of degree $ |G| $, and the functor \[ q_*(-)^G\colon  \ECoh{G}{X}\to \Coh{X/G}  \] defined by \[ q_*(\shf F)^G(U) = (q_*(\shf F)(U))^G = (\shf F(q^{-1}(U)))^G\] for any open $ U\subset X/G $ is an equivalence. 
		The inverse of $ q_*(-)^G $ is given by $ q^* $.
	\end{enumerate}
\end{proposition}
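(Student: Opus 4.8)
The plan is to establish the two parts in turn, reducing the global statements to affine computations and then, for part~(2), to commutative-ring Galois theory. For part~(1), the crucial reduction is the lemma that, under the hypothesis, every orbit lies in a $G$-\emph{invariant} open affine. Given an orbit $Gx$ inside an affine open $U=\Spec A$, I would set $V=\bigcap_{g\in G}g^{-1}U$, a $G$-invariant open containing the whole orbit, and then separate the orbit from the closed complement $Z=U\setminus V$: by prime avoidance there is $h\in A$ vanishing on $Z$ with $h(gx)\ne 0$ for all $g$, so that $f=\prod_{g\in G}(g\cdot h)$ is $G$-invariant, nonvanishing on $Gx$, and cuts out a $G$-invariant principal affine open $D(f)\subseteq V$ containing the orbit. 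Covering $X$ by such invariant affines $\Spec A_\alpha$ and forming $\Spec A_\alpha^G$, the pieces glue because taking invariants commutes with localisation at an invariant element, $(A_f)^G=(A^G)_f$; this produces $q\colon X\to X/G$ together with the stated structure-sheaf formula $\OO_{X/G}(U)=(\OO_X(q^{-1}U))^G$.

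For the remaining assertions of part~(1), finiteness of $q$ follows since each $a\in A$ is a root of the monic $\prod_{g\in G}(t-g\cdot a)\in A^G[t]$, so $A$ is integral over $A^G$, and being a finite-type $\C$-algebra it is then a finite $A^G$-module; the universal property is the gluing of the affine statement that every $G$-invariant ring homomorphism out of $A$ factors uniquely through $A^G$.

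For part~(2), I would exploit that a free action makes $q$ a $G$-torsor, equivalently (on a $G$-invariant affine $\Spec A$ with $B=A^G$) makes $B\subseteq A$ a $G$-Galois extension: the Galois map $A\otimes_B A\to\prod_{g\in G}A$, $a\otimes a'\mapsto(a\,(g\cdot a'))_g$, is an isomorphism precisely because the action is free. Commutative-ring Galois theory then yields at once that $A$ is a finite projective $B$-module of rank $|G|$ — so $q$ is flat of degree $|G|$ — and that $[X/G]$ is represented by the fppf-sheaf quotient $X/G$. The same theory supplies mutually inverse equivalences $N\mapsto N^G$ and $M\mapsto A\otimes_B M$ between $B$-modules and $G$-equivariant $A$-modules; globalising over the invariant-affine cover, these are exactly $q_*(-)^G$ and $q^*$ (the latter carrying the canonical equivariant structure since $q\circ g=q$). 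I would check the unit $\shf G\to q_*(q^*\shf G)^G$ and counit $q^*q_*(\shf F)^G\to\shf F$ are isomorphisms by faithfully flat base change to $X$, where they reduce to the split case $A\otimes_B A\cong\prod_{g\in G}A$ handled by the Galois isomorphism.

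The step I expect to be the main obstacle is the invariant-affine lemma in part~(1): the passage from an orbit lying in \emph{some} affine open to lying in a $G$-\emph{invariant} one is exactly where the non-separated pathologies (as in the footnote's line with doubled origin) must be excluded, and it requires the careful prime-avoidance construction above rather than a naive intersection of translates. By contrast, the content of part~(2) is more conceptual than technical, resting entirely on recognising $G$-equivariant structures as descent data for the torsor $q$.
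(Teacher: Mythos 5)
Your argument is correct, but it is worth noting that the paper's own ``proof'' of this proposition is a single citation: it declares the statement a special case of \cite[Theorem 1, p.~111]{Mumford} and stops. What you have written is essentially a reconstruction of the proof of that cited theorem (the same argument appears in SGA~1, Exp.~V): part (1) reduces to producing $G$-invariant affine opens via prime avoidance and an invariant separating function, then glues the schemes $\Spec A_\alpha^G$ using $(A_f)^G=(A^G)_f$; part (2) identifies $q$ as a $G$-torsor and invokes Galois/faithfully flat descent. Two points deserve a little more care. First, for $h\in\OO_X(U)$ the translate $g\cdot h$ is a section over $gU$, not over $U$, so the invariant product $f=\prod_{g\in G}g\cdot h$ only makes sense after restricting $h$ to the invariant open $V=\bigcap_g g^{-1}U$; the locus $V_f$ is then a distinguished affine open of $D(h)$ and the construction goes through. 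Second, both the finiteness of $q$ (module-finiteness of $A$ over $A^G$, via Noether's finiteness theorem) and the claim that freeness of the action forces the Galois map $A\otimes_B A\to\prod_{g\in G}A$ to be an isomorphism (freeness on closed points gives the Chase--Harrison--Rosenberg separating condition only because residue fields of closed points are $\C$) use that $X$ is of finite type over $\C$. That hypothesis is implicit in the paper's conventions and in the theorem of Mumford being cited, but it is not stated in the proposition, so you should make it explicit; with that caveat your outline is a complete and correct proof of the result the paper delegates to the literature.
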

\begin{proof}
	This is a special case of \cite[Theorem 1, p. 111]{Mumford}.
\end{proof}

We will also need to know how sheaf cohomology behaves under descent along group quotients:
\begin{lemma}\label{lem:specseqforGammaStack}
	Let $ X $ be a scheme, acted on by a finite group $ G $. Let $ \shf F $ be a $ G $-equivariant sheaf on $ X $.
	Write $ \shf F' $ for the sheaf on $ [X/G] $ obtained from $ \shf F $ by descent. Then there is a spectral sequence
	\[ E^{p,q}_2 = H^p(G, H^q(X, \shf F))\Rightarrow H^{p+q}([X/G], \shf F').\]

\end{lemma}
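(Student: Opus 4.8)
The plan is to realize the desired spectral sequence as the Grothendieck spectral sequence of a composition of two left-exact functors, i.e.\ as a Cartan--Leray spectral sequence for the atlas $X\to[X/G]$. First I would use descent to identify the relevant abelian categories: a quasi-coherent sheaf on $[X/G]$ is the same datum as a $G$-equivariant quasi-coherent sheaf on $X$, and under this equivalence $\shf F'$ corresponds to $\shf F$. The key observation is that the global-sections functor on the stack factors as
\[
\Gamma([X/G],-)\;=\;(-)^G\circ\Gamma(X,-),
\]
where $\Gamma(X,-)$ sends a $G$-equivariant sheaf to its global sections equipped with their natural $G$-action, landing in the category of $\C[G]$-modules, and $(-)^G$ takes $G$-invariants. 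Indeed, a global section of $\shf F'$ on $[X/G]$ is precisely a $G$-invariant global section of $\shf F$ on $X$, so this factorization holds on the nose.

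Next I would apply the Grothendieck spectral sequence to this composition. Both functors are left exact, so it remains to identify their right derived functors. On the one hand $R^q\Gamma(X,-)=H^q(X,-)$ is ordinary sheaf cohomology, which inherits a $G$-action because $\shf F$ is equivariant; on the other hand $R^p(-)^G=H^p(G,-)$ is group cohomology. Since the right derived functors of the composite compute $H^{p+q}([X/G],\shf F')$, the Grothendieck spectral sequence reads
\[
E_2^{p,q}=H^p\bigl(G,H^q(X,\shf F)\bigr)\;\Rightarrow\;H^{p+q}([X/G],\shf F'),
\]
which is exactly the claim.

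To make this rigorous I would check the standard hypotheses of the Grothendieck machine. The category of $G$-equivariant $\OO_X$-modules is a Grothendieck abelian category and hence has enough injectives (one must work here, rather than in coherent sheaves, to guarantee this), and derived global sections computed there agree with ordinary sheaf cohomology after forgetting the $G$-action. The one nontrivial compatibility required is that $\Gamma(X,-)$ carries injective equivariant sheaves to $(-)^G$-acyclic $\C[G]$-modules. I expect this to be the main (though ultimately easy) point: because we work over $\C$ and $G$ is finite, Maschke's theorem makes $\C[G]$ semisimple, so \emph{every} $\C[G]$-module is $(-)^G$-acyclic and the hypothesis is automatic. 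This semisimplicity in fact forces $H^p(G,-)=0$ for $p>0$, so the spectral sequence degenerates at $E_2$ and recovers the isomorphism $H^n([X/G],\shf F')\cong H^n(X,\shf F)^G$, consistent with \cref{lem:TameStackCohom}.
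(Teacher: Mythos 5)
Your argument is correct. The paper does not prove this lemma itself but simply cites \cite[Theorem A.6]{FHRZ08}, and the Cartan--Leray/Grothendieck spectral sequence for the factorization $\Gamma([X/G],-)=(-)^G\circ\Gamma(X,-)$ is precisely the standard proof of that result, so you are supplying the details the paper outsources. The one point worth making explicit is the compatibility you flag: the forgetful functor from $G$-equivariant sheaves to sheaves has an exact left adjoint (induction, $\shf G\mapsto\bigoplus_{g\in G}g^*\shf G$), so it preserves injectives and the equivariantly-computed $R^q\Gamma(X,-)$ really is ordinary sheaf cohomology; in general one would also show $\Gamma(X,-)$ of an injective equivariant sheaf is a coinduced, hence $(-)^G$-acyclic, $G$-module, but as you note this is automatic over $\C$ by Maschke, which is all the paper needs since it immediately specializes to $H^p([X/G],\shf F')=H^p(X,\shf F)^G$.
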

\begin{proof}
	See \eg \cite[Theorem A.6]{FHRZ08}.\footnote{Their statement is in a slightly different context, but the proof works verbatim here.}
\end{proof}
For us, since we work over a field of characteristic zero, the functor $ M\mapsto M^G $ will always be exact, and we have $ H^p([X/G], \shf F') = H^p(X, \shf F)^G $.

\begin{remark}
	It is	 common to use the same notation for $ \shf F $ and $ \shf F' $, as descent induces an equivalence of categories $ \ECoh{G}{X}\cong \Coh{[X/G]} $. We will make the distinction in order to (hopefully) reduce confusion.
\end{remark}

\subsection{Framed sheaves on projective Deligne-Mumford stacks}

See \cite[Corollary 5.4, Definition 5.5]{Kresch} for the definition of a \emph{projective} Deligne-Mumford stack.

Let $ \mathcal Y $ be a normal irreducible projective Deligne-Mumford stack, with a coarse moduli scheme $ \tau\colon \mathcal Y \to Y $.

\begin{definition}[{\cite[Definitions 3.1, 3.3]{BruzzoSala}}]\label{def:FramedSheaf}
	Let $ \shf F $ be a quasi-coherent sheaf on $ \mathcal Y $. An \emph{$ \shf F $-framed} sheaf on $ \mathcal Y $ is a pair $ (\shf G, \phi_{\shf G}) $, where $ \shf G $ is a coherent sheaf on $ \mathcal Y $, and $ \phi_{\shf G}\colon \shf G\to \shf F $ is a sheaf homomorphism.
	A morphism of $ \shf F $-framed sheaves $ (\shf G, \phi_{\shf G})\to (\shf H, \phi_{\shf H}) $ consists of a sheaf morphism $ f\colon \shf G\to\shf H $ such that there is a $c\in \C^\times $ with $ \phi_{\shf G}\circ f=c\phi_{\shf H} $.
A morphism $  f\colon (\shf G, \phi_{\shf G})\to (\shf H, \phi_{\shf H}) $ of $ \shf F $-framed sheaves is injective, respectively surjective, respectively an isomorphism, if $  f\colon \shf G\to \shf H $ is. Clearly, if $ f $ is an isomorphism, we can take $ c=1 $.
\end{definition}

\begin{definition}\label{def:FramedAlongDivisor}
	Let $ \mathcal D\subset \mathcal Y $ be a smooth integral closed substack of codimension 1.
	A coherent sheaf on $ \mathcal Y $ \emph{framed along} $ \mathcal D $, or just a \emph{framed sheaf}, is a $ \OO_{\mathcal D}^r $-framed sheaf $ (\shf G, \phi_{\shf G}) $, where $ r $ is the rank of $ \shf G $, and $ \phi_{\shf G}|_{\mathcal D }\colon \shf G|_{\mathcal D }\isoto\OO_{\mathcal D}^r$ is an isomorphism.
\end{definition}
\begin{remark}
	Note that we differ from the definition in \cite[5]{BruzzoSala} on several points:
	\begin{itemize}
		\item We make no requirement on $\dim \mathcal Y $, and we do not require $ \mathcal Y $ to be smooth -- in fact, we will usually work with a singular $ \mathcal Y $;
		\item We do not make any assumptions on the coarse moduli space of $ \mathcal D $;
		\item We do not require $ \shf G $ to be locally free in a neighbourhood of $\mathcal{D}$ -- in our cases, we will see in \cref{lem:locFreeAroundFraming,,lem:locFreeAroundFramingX} that this property follows from the other conditions.
	\end{itemize}
\end{remark}

	\section{Quotients of the projective plane}\label{chap:SheavesOnQuotients}

\subsection{The stack $ \mathcal X $}\label{sec:stackX}\label{sec:weirdQuotient}
We start by constructing the stack $ \mathcal X $.

First, we construct the scheme quotient $ \P^2/\Gamma $ by considering the action of $ \Gamma $ on the homogeneous coordinate ring $ \C[x,y,z] $ of $ \P^2 $. 
It is well-known that the invariant ring $ \C[x,y]^\Gamma $ is generated as a $ \C $-algebra by three homogeneous elements. These three, together with $ z $ will then generate the ring $ \C[x,y,z]^\Gamma $. Weighting these four elements by their degrees, we find \[ \P^2/\Gamma = \Proj \C[x,y,z]^\Gamma ,\]
naturally embedding as a singular surface in a three-dimensional weighted projective space.

Consider the quotient morphism $ \P^2\to \P^2/\Gamma $, and let $ l_\infty = \{z=0\} \subset \P^2$, the `line at infinity'.
The complement of $l_\infty $ is an affine plane, the image of which under this morphism is the Kleinian singularity $ \C^2/\Gamma = \Spec \C[x,y]^\Gamma $. It has a unique singular point, and we write $ o $ for the preimage of this point (the `origin') in $ \P^2 $.

We may also instead consider the stack quotient $ [\P^2/\Gamma] $. There are then surjective morphisms of stacks \[ \P^2\to [\P^2/\Gamma]\to \P^2/\Gamma .\]

Let now $ U_f $ be the open subscheme $ \P^2\setminus(\{o\}\cup l_\infty) $. As $ \Gamma $ acts freely on $ U_f $, \cref{thm:descentWhenScheme} applies, and the stack $ [U_f/\Gamma]$ is represented by a scheme $ U_f/\Gamma $. As a slight abuse of notation, we will simply write $ [U_f/\Gamma] = U_f/\Gamma $ in what follows.
\begin{definition}\label{def:XStack}
	We define the stack $ \mathcal X $ as the pushout in the diagram of Deligne-Mumford stacks
	\[\begin{tikzcd}
		U_f/\Gamma \arrow[d, hook] \arrow[r, hook] &\C^2/\Gamma \arrow[d, dashed] \\
		{[\P^2\setminus\{o\}/\Gamma]} \arrow[r, dashed]              &  \mathcal X
	\end{tikzcd}\]
	
\end{definition}
That it is possible to take pushouts of stacks in this manner, follows for instance from \cite[Theorem C]{Rydh}, which also shows that the two dashed morphisms are open immersions.

Intuitively, $ \mathcal X $ is a quotient of $ \P^2 $ by $ \Gamma $, where we have taken the scheme quotient around $ 0 $, and the stack quotient around $ l_\infty $. 

Set $ d_\infty = [l_\infty/\Gamma] $ and $ r_\infty = l_\infty/\Gamma$. These will differ as $ \Gamma $ does not act freely on $ l_\infty $. Then $ d_\infty $ is a closed substack of both $ [\P^2/\Gamma] $ and $ \mathcal X $.
We thus have the following commutative diagram of stack morphisms, where we name the various morphisms as indicated.

\begin{equation}\label{diag:bigDiagram}
	\begin{tikzcd}
		l_\infty \arrow[dd, two heads, "\pi|_{l_\infty}"'] \arrow[r, hook, "i"]                 & \P^2 \arrow[d, two heads, "k"] \arrow[dd, two heads, bend left=49, "\pi"] \\
		& {[\P^2/\Gamma]} \arrow[d, two heads, "q"]               \\
		d_\infty \arrow[d, two heads] \arrow[r, hook, "j"] \arrow[ru, hook] & \mathcal X \arrow[d, two heads, "c"]                             \\
		r_\infty \arrow[r, hook]                                       & \P^2/\Gamma                                       
	\end{tikzcd}.
\end{equation}
We may note in diagram~\eqref{diag:bigDiagram} that $ k $ is étale. The morphism $ \pi $ is \emph{not} étale, but its restriction to $ \P^2\setminus \{o\} $ (on which we can identify it with $ k $) is. Furthermore, $ c\circ\pi= c\circ q\circ k $ is a finite morphism of schemes, by \cref{thm:descentWhenScheme}.

We shall need some more details on $\mathcal{X}$. 
\begin{lemma}\label{lem:XisDM}
	The stack $ \mathcal X $ is a proper, tame Deligne-Mumford stack of finite type, with finite diagonal.
\end{lemma}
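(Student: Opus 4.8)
The plan is to verify each property of $\mathcal X$ in turn, exploiting the fact that $\mathcal X$ is glued from two patches, each of which is a well-understood quotient. Recall from \cref{def:XStack} that $\mathcal X$ is the pushout of the open substacks $[\,\P^2\setminus\{o\}/\Gamma]$ and $\C^2/\Gamma = [\,(\P^2\setminus l_\infty)/\Gamma]$ along their common open $U_f/\Gamma$. Since being a Deligne--Mumford stack of finite type, having finite diagonal, and being tame are all \emph{local} properties on the target of an open cover, I would first establish them on each of the two patches and then conclude for $\mathcal X$. The only genuinely global property to check is properness, which is where the compactification really enters.

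First I would treat the two patches. On the substack $\C^2/\Gamma$, we are simply looking at the affine scheme $\Spec\C[x,y]^\Gamma$ (here the scheme quotient and the relevant stack agree by \cref{thm:descentWhenScheme}, as $\mathcal X$ uses the scheme quotient near the origin), which is an affine variety of finite type, hence trivially a Deligne--Mumford stack with finite (indeed trivial) diagonal, and tame. On the patch $[\,\P^2\setminus\{o\}/\Gamma]$, we have a global quotient stack $[X/\Gamma]$ of a finite-type scheme by a \emph{finite} group $\Gamma$. A quotient of a scheme by a finite group acting with finite (here trivial, since $\Gamma$ is finite) stabilizers is automatically a Deligne--Mumford stack of finite type with finite diagonal; tameness follows because we work over $\C$, so $|\Gamma|$ is invertible and all stabilizer groups are linearly reductive. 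These are all standard facts about global quotient stacks $[X/\Gamma]$ with $\Gamma$ finite, and I would cite them (e.g.\ from the references on tame stacks already in use, or \cite{Olsson}) rather than reprove them. Since $\mathcal X$ is covered by these two open substacks, being Deligne--Mumford, of finite type, with finite diagonal, and tame all descend to $\mathcal X$.

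The main obstacle, and the step requiring genuine argument, is \emph{properness}. Here I would use the finite morphism $c\colon \mathcal X\to \P^2/\Gamma$ appearing in diagram~\eqref{diag:bigDiagram}: we have $c\circ\pi = c\circ q\circ k$ finite by \cref{thm:descentWhenScheme}, and $\P^2/\Gamma = \Proj\C[x,y,z]^\Gamma$ is a projective scheme, hence proper over $\C$. To conclude that $\mathcal X$ is proper, I would verify that $c$ itself is proper (indeed finite): this can be checked via the valuative criterion, or more concretely by observing that $c$ is an isomorphism over the open dense locus where $\Gamma$ acts freely and is a tame coarse-space-type map over $d_\infty$, so it is quasi-finite, separated and universally closed. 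Properness of $\mathcal X$ over $\C$ then follows from composing the proper morphism $c$ with the proper structure morphism $\P^2/\Gamma\to\Spec\C$. Separatedness, which is part of properness, also follows from finiteness of the diagonal together with the universal closedness; alternatively one checks the valuative criterion directly on the two patches. The delicate point is ensuring that the gluing along $U_f/\Gamma$ does not introduce any non-separatedness (as in the line-with-two-origins cautionary example in the footnote to \cref{thm:descentWhenScheme}), but this is precisely controlled by the fact that $c$ maps $\mathcal X$ to the genuinely separated scheme $\P^2/\Gamma$.

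In summary, I expect the proof to be short: the Deligne--Mumford, finite-type, finite-diagonal, and tameness claims reduce to standard properties of the two quotient patches and descend along the open cover, while properness follows by exhibiting the finite morphism $c\colon\mathcal X\to\P^2/\Gamma$ to a projective scheme. The one place where care is needed is confirming separatedness of the gluing, and the cleanest way to secure it is to lean on the finiteness of $c$ rather than arguing directly with the pushout.
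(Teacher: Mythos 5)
Your overall strategy coincides with the paper's: the Deligne--Mumford, finite-type and tameness claims are checked on the two quotient patches of \cref{def:XStack}, and properness is obtained by factoring through the morphism $c\colon \mathcal X\to \P^2/\Gamma$ to the projective scheme $\P^2/\Gamma$ (the paper gets properness of $c$ from the Keel--Mori-type theorem \cite[Theorem 3.1(2)]{Conrad} rather than a hands-on verification, but that is a cosmetic difference).

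There is, however, one genuine misstep in your first paragraph: having finite diagonal is \emph{not} a property local on an open cover of $\mathcal X$, so it does not simply ``descend along the open cover'' from the two patches. Finiteness of $\Delta_{\mathcal X}$ contains properness of the diagonal, i.e.\ separatedness, and separatedness famously fails to be local on the source --- the line with two origins (which the paper itself invokes in the footnote to \cref{thm:descentWhenScheme}) is glued from two separated patches. The correct locality statement is that finiteness of a morphism is local on its \emph{target}, so one must check $\Delta_{\mathcal X}$ after base change to an (étale) cover of $\mathcal X\times_{\C}\mathcal X$; this is exactly what the paper does, using the four pairwise products of the two patches and the identification $V_p\times_{\mathcal X}V_d = U_f/\Gamma$ from \eqref{eq:fibreprod}. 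You do flag the separatedness danger later, and your proposed remedy --- leaning on the separated morphism $c$ to the separated scheme $\P^2/\Gamma$, so that $\Delta_{\mathcal X}$ factors as the relative diagonal over $\P^2/\Gamma$ followed by a closed immersion --- does repair the gap; but as written that remedy is deployed only for properness, while the finite-diagonal claim is left resting on a false locality principle. Rewrite that step either by checking the diagonal on a cover of $\mathcal X\times\mathcal X$, or by explicitly reducing to the relative diagonal over $\P^2/\Gamma$.
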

\begin{proof}
	$ \mathcal X $ is Deligne-Mumford because it is the pushout of the Deligne-Mumford stacks $ [(\P^2\setminus \{o\})/\Gamma] $ and $\C^2/\Gamma$ along the Deligne-Mumford stack $ U_f $.
	Explicitly, an étale atlas for $ \mathcal X $ is given by the map $ (\P^2\setminus \{o\})\coprod (\C^2/\Gamma)\to \mathcal X$ induced by the definition of $ \mathcal X $.
	Then, to show that $ \mathcal X $ is proper over $ \C $, we use \cite[Theorem 3.1(2)]{Conrad} to show that $ \mathcal{X}\to \P^2/\Gamma $ is proper.
Now $ \P^2/\Gamma $ is proper, being projective, and since the composition of proper morphisms of stacks is proper (\cite[{Tag 0CL7}]{stacks}), $ \mathcal{X} $ is proper.

To see that $ \mathcal{X} $ is of finite type, recall that being of finite type is equivalent to being locally of finite type and quasicompact. It is simple to check on the patches $ [ \P^2\setminus \{o\}/\Gamma] $ and $ \C^2/\Gamma $ that $ \mathcal{X} $ is locally of finite type. To see that it's quasicompact, apply \cite[Tag 04YC, (3),(5)]{stacks} to the composition of morphisms $ \P^2\to[\P^2/\Gamma]\to \mathcal{X} $.

It is tame: Because $ \mathcal X $ is proper, it is separated. Because $ \mathcal X $ is Deligne-Mumford, every automorphism group of a geometric point is finite, so its order is invertible in $ \C $.

Finally, to determine whether the diagonal morphism $\Delta_{\mathcal X}\colon \mathcal{X}\to \mathcal{X}\times_\C \mathcal X $ is finite, it is enough \cite[Tag 04XC]{stacks} to work étale-locally on the target. Now an étale cover of $  \mathcal X\times \mathcal X $ consists of $ \C^2/\Gamma\times \C^2/\Gamma $, $ \C^2/\Gamma\times[\P^2\setminus \{o\}/\Gamma] $, $ [\P^2\setminus \{o\}/\Gamma]\times \C^2/\Gamma $, and $ [\P^2\setminus \{o\}/\Gamma]\times[\P^2\setminus \{o\}/\Gamma] $ and it is simple to prove that the base change of $ \Delta_{\mathcal X} $
to any of these is finite.
\end{proof}

Similarly, $ [\P^2/\Gamma] $ is also a proper tame Deligne-Mumford stack of finite type. with finite diagonal. Since $ [\P^2/\Gamma] $ is smooth, the singular locus of $ \mathcal{X} $ is precisely the unique singular point of $ \C^2/\Gamma\subset \mathcal X $.

Let us also note that 
\begin{equation}\label{eq:fibreprod}
 \C^2/\Gamma\times_{\mathcal X}[(\P^2\setminus \{o\})/\Gamma] = U_f/\Gamma.
\end{equation}

To see this, we can set up the commutative diagram of fibre products
\[ \begin{tikzcd}
	\C^2/\Gamma\times_{\P^2/\Gamma}(\P^2\setminus \{o\})/\Gamma \arrow[d] \arrow[r] & \C^2/\Gamma \arrow[d] \\
	(\P^2\setminus \{o\})/\Gamma\times_{\P^2/\Gamma}\mathcal{X} \arrow[d] \arrow[r]                & \mathcal X \arrow[d, "c"]  \\
	(\P^2\setminus \{o\})/\Gamma \arrow[r]                              & \P^2/\Gamma          
\end{tikzcd} ,\]
where the top-left downwards morphism is induced by the universal property of $ (\P^2\setminus \{o\})/\Gamma\times_{\P^2/\Gamma}\mathcal{X}  $.

As
\[ (\P^2\setminus \{o\})/\Gamma\times_{\P^2/\Gamma}\mathcal{X}={[(\P^2\setminus \{o\})/\Gamma]} ,\] and \[ \C^2/\Gamma\times_{\P^2/\Gamma}(\P^2\setminus \{o\})/\Gamma = (\C^2\setminus \{o\})/\Gamma =U_f/\Gamma,\] \eqref{eq:fibreprod} holds.

\subsection{Defining a sheaf on $\mathcal X$}

By a sheaf on $ \mathcal X $, we mean a sheaf defined on the étale site of $ \mathcal{X} $ (see \cite[Définition 12.1.(ii)]{LMB} for details).
Thus a sheaf $ \shf F $ on $\mathcal X$ is the data consisting of
\begin{itemize}
	\item for every étale morphism $u\colon U\to \mathcal X$ where $ U $ is a scheme, a (Zariski) sheaf $\shf F_U$ on $U$,
	\item  for any two étale morphisms from schemes $ U_i\to \mathcal X,\  U_j\to \mathcal X $,  an isomorphism $ \psi_{ij}\colon \mathrm{pr}_i^*\shf F_{U_i}\to \mathrm{pr}_j^*\shf F_{U_j} $ over the fibre product $ U_i\times_{\mathcal X} U_j $,
	\item such that the isomorphisms $ \psi_{ij} $ satisfy a \emph{cocycle condition} (see \cite[tag 03O7]{stacks})
\end{itemize} 
Here $ \mathrm{pr}_i, \mathrm{pr}_j $ are the projections from $ U_i\times_{\mathcal X} U_j $ onto the first and second factor.

If $u\colon U\to \mathcal X$ is such an étale map, we can take a Zariski cover $ \{U_1, \dots, U_k\} $ of $U$ such that for every $ i $, $ c\circ u(U_i) $ has empty intersection with either $ \{o\} $ or $ r_\infty $. Then a Zariski sheaf on $ U $ is given by Zariski sheaves on each $ U_i, i\in \{1,\dots, k\} $, that glue together on the overlaps $ U_{i}\cap U_j $.
It follows that we may assume that either $ c\circ u(U_i)\cap \{o\} = \emptyset $ or $ c\circ u(U_i)\cap r_\infty = \emptyset $.  In the first case, $ u $ factors through $ [(\P^2\setminus \{o\})/\Gamma] $, and in the second, through $ \C^2/\Gamma $. 

In conclusion, to define an (étale) sheaf $ \shf F $ on $ \mathcal{X} $, it is enough to define its restrictions  $ \shf F|_{\C^2/\Gamma} $ and $ \shf F|_{[(\P^2\setminus \{o\})/\Gamma]} $, together with a choice of isomorphism between the restrictions of these sheaves to $ U_f/\Gamma $. But to define a sheaf on $ U_f/\Gamma = [(\P^2\setminus (\{o\}\cup l_\infty))/\Gamma] $ is equivalent to defining a $ \Gamma $-equivariant sheaf on $ (\P^2\setminus (\{o\}\cup l_\infty)) $. 

So we can instead define $\shf F $ by choosing a  sheaf $ \shf F_{(\P^2\setminus \{o\})} $ on $ \P^2\setminus \{o\} $ and a sheaf $ \shf F_{\C^2/\Gamma} $ on $ \C^2/\Gamma $ together with a $ \Gamma $-equivariant isomorphism of their inverse images on $ U_f/\Gamma= \C^2/\Gamma\times_{\mathcal{X}}[(\P^2\setminus \{o\}/\Gamma)] $ -- i.e. an isomorphism of $ \Gamma $-equivariant sheaves \[ \iota_{\shf F}\colon(\shf F_{(\P^2\setminus \{o\})})|_{U_f}\isoto  \left((\pi|_{\P^2\setminus l_\infty})^*\shf F_{\C^2/\Gamma}\right)|_{U_f}\] on $ U_f $.

(Since this étale cover of $ \mathcal{X} $ consists of only two schemes, the cocycle condition is trivially satisfied.) All our sheaves on $ \mathcal{X} $ will be described in this fashion.

\begin{example}
	We can describe the ideal sheaf $ \shf I_{d_\infty} $ of the closed substack $ d_\infty\subset \mathcal X $ as follows: 
	
	The ideal sheaf $ \shf I_{l_\infty} $ of $ l_\infty $ in $ \P^2$ is $ \Gamma $-equivariant, locally generated by a section $ z\in \OO_{\P^2} $. Over the subscheme $ U_f, $ it is isomorphic to the structure sheaf through the morphism locally defined by mapping a section $ f $ to $\frac{f}{z} $, since $ z $ is invertible.
	Thus we can take \[ (\shf I_{d_\infty})_{(\C^2/\Gamma)}= \OO_{\C^2/\Gamma}, \quad (\shf I_{d_\infty})_{(\P^2\setminus \{o\})}= \shf I_{l_\infty}|_{(\P^2\setminus \{o\})} ,\quad \iota_{\shf I_{d_\infty}}\colon f\mapsto \frac{f}{z} ,\] which completely defines $ \shf I_{d_\infty} $.
\end{example}

\subsection{Notation}
Let's fix some notation for various substacks of $ \P^2 $ and $ \mathcal{X} $.
As mentioned, we let $ o $ be the "origin" of the affine plane $U_o \coloneqq \P^2\setminus l_\infty $, considered as an open subscheme of $ \P^2 $. Similarly, we set $ U_l \coloneqq \P^2\setminus \{o\}$.
The image of $ o $ under $ \pi $ is the unique singular point of $ \mathcal X $, call it $ p $. 
Similarly, write $ V_d $ for the stack $ [(\P^2\setminus \{o\})/\Gamma] $, considered as an open substack of $ \mathcal X $, and $ V_p $ for the singular affine scheme $ \C^2/\Gamma = (\P^2\setminus l_\infty)/\Gamma $, also considered as an open substack of $ \mathcal X $.

\subsection{Projectivity of $ \mathcal{X} $}
We shall need a few more properties of $ \mathcal{X} $, namely; we must find its coarse moduli space, and we shall show that $ \mathcal X $ is a projective stack.

The coarse moduli space of $ \mathcal{X} $ is as one should expect:
\begin{lemma}\label{lem:CoarseModSpaceOfX}
	The scheme $ \P^2/\Gamma $ is the coarse moduli space for both $ [\P^2/\Gamma] $ and $ \mathcal{X} $.
	Furthermore, the functors \[ c_* \colon \Coh{\mathcal{X}}\to \Coh{\P^2/\Gamma} \textrm{ and } (c\circ q)_*\colon \Coh{[\P^2/\Gamma]}\to \Coh{\P^2/\Gamma} \] are exact.
\end{lemma}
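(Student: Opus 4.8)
The plan is to prove the four assertions in order, handling the two coarse-space statements first and deducing the exactness claims as formal consequences of \cref{lem:TameStackCohom}. For $[\P^2/\Gamma]$ I would verify the two conditions of \cref{def:CoarseModSpace} for the morphism $c\circ q\colon [\P^2/\Gamma]\to\P^2/\Gamma$. The point condition is immediate: for an algebraically closed field $k$, the isomorphism classes of maps $\Spec k\to[\P^2/\Gamma]$ are exactly the $\Gamma$-orbits of $k$-points of $\P^2$, and \cref{thm:descentWhenScheme} identifies these with the $k$-points of $\P^2/\Gamma$. The universal factorization follows from the categorical-quotient property in \cref{thm:descentWhenScheme}: a morphism $[\P^2/\Gamma]\to Y$ is the datum of a $\Gamma$-invariant morphism $\P^2\to Y$, which factors uniquely through $\P^2\to\P^2/\Gamma$; here one uses that every $\Gamma$-orbit in $\P^2$, being a finite set of points in a quasi-projective variety, lies in an affine open, so the hypotheses of \cref{thm:descentWhenScheme} are met, and the case of a general algebraic-space target reduces to the scheme case étale-locally on $Y$.

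For $\mathcal X$ I would verify the same two conditions directly for $c\colon\mathcal X\to\P^2/\Gamma$, exploiting the open cover of $\mathcal X$ by $V_p=\C^2/\Gamma$ and $V_d=[(\P^2\setminus\{o\})/\Gamma]$ with overlap $U_f/\Gamma$. For the point condition, the $k$-points of $\mathcal X$ decompose according to this cover into the $k$-points of $\C^2/\Gamma$ and the $\Gamma$-orbits of $k$-points of $\P^2\setminus\{o\}$, and $c$ identifies these bijectively with the $k$-points of the two opens $\C^2/\Gamma$ and $(\P^2\setminus\{o\})/\Gamma$ of $\P^2/\Gamma$. For the factorization, a morphism $g\colon\mathcal X\to Y$ restricts to $V_p$ and $V_d$, and these restrictions factor through their coarse spaces $\C^2/\Gamma$ and $(\P^2\setminus\{o\})/\Gamma$ — the first trivially since $V_p$ is already a scheme, the second by the previous paragraph applied to $\P^2\setminus\{o\}$. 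The two factorizations agree on $U_f/\Gamma=(\C^2\setminus\{o\})/\Gamma$ by uniqueness of factorization there, so they glue to a unique $\P^2/\Gamma\to Y$ through which $g$ factors. Compatibility of this gluing with the cover is underwritten by the fact that coarse-space formation commutes with the open immersions $\C^2/\Gamma,\,(\P^2\setminus\{o\})/\Gamma\hookrightarrow\P^2/\Gamma$ (flat base change, \cite[\textsection 11.3]{Olsson}).

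Finally, the exactness of $c_*$ and $(c\circ q)_*$ is a direct application of \cref{lem:TameStackCohom}. By \cref{lem:XisDM}, $\mathcal X$ is a tame Deligne-Mumford stack of finite type (hence locally of finite type) with finite diagonal, and $c$ is its coarse moduli morphism by the above, so $c_*$ is exact. The same reasoning applied to $[\P^2/\Gamma]$ — which the text records as also being a proper, tame, finite-type Deligne-Mumford stack with finite diagonal — together with its coarse morphism $c\circ q$ yields exactness of $(c\circ q)_*$.

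I expect the main obstacle to be the coarse-space statement for $\mathcal X$: one must check carefully that the coarse spaces of the two patches $V_p$ and $V_d$ glue correctly and that $c$ is genuinely the coarse moduli morphism rather than merely some morphism to $\P^2/\Gamma$. The delicate point is the universal factorization across the gluing, namely ensuring that the factorizations produced on $V_p$ and $V_d$ agree on the overlap $U_f/\Gamma$ and hence descend; here the freeness of the $\Gamma$-action on $U_f$ (so that $U_f/\Gamma$ is already the relevant scheme) and the uniqueness in \cref{thm:descentWhenScheme} are exactly what make the descent go through.
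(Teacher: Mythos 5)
Your proof is correct, and two of its three parts coincide with the paper's: the identification of the coarse space of $[\P^2/\Gamma]$ rests on the categorical-quotient property from \cref{thm:descentWhenScheme} plus the evident bijection on geometric points, and the exactness claims are read off from \cref{lem:XisDM} and \cref{lem:TameStackCohom} exactly as you do. Where you genuinely diverge is the coarse-space claim for $\mathcal X$ itself, which is the step you flag as delicate. The paper avoids the patchwise gluing entirely: since the coarse moduli morphism $[\P^2/\Gamma]\to\P^2/\Gamma$ factors as $[\P^2/\Gamma]\xrightarrow{\;q\;}\mathcal X\xrightarrow{\;c\;}\P^2/\Gamma$, the paper simply sandwiches $\mathcal X$ between a stack and its coarse space: any morphism from $\mathcal X$ to an algebraic space precomposes with $q$ to a morphism from $[\P^2/\Gamma]$, which factors uniquely through $\P^2/\Gamma$, and the bijection on geometric points follows likewise. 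That argument is shorter but leaves implicit the verification that the induced map $\P^2/\Gamma\to Y$ really does factor the original morphism through $c$ (one needs $q$ to be an epimorphism of stacks). Your gluing over $V_p$ and $V_d$ is more laborious but makes precisely that compatibility explicit, and correctly isolates the two facts that drive it: compatibility of coarse-space formation with open immersions, and uniqueness of the factorization on the overlap $U_f/\Gamma$. A further organizational difference concerns algebraic-space targets: the paper invokes \cite[Theorem 3.1]{Conrad} to know in advance that the coarse space of $[\P^2/\Gamma]$ exists and is a scheme, so that testing universality against scheme targets suffices, whereas you reduce to the scheme case étale-locally on $Y$; both are legitimate, though your reduction tacitly requires gluing the locally obtained factorizations, a routine but nonempty descent step.
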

\begin{proof}
	By \cite[Theorem 3.1]{Conrad}), the coarse moduli space of $ [\P^2/\Gamma] $ must be a scheme.  As any $ \Gamma $-invariant morphism of schemes $ \P^2\to Y $ factors through $ \P^2/\Gamma $ (\cref{thm:descentWhenScheme}), this shows that any morphism from $ [\P^2/\Gamma] $ to a scheme factors through $ \P^2/\Gamma $. 
	The second condition of \cref{def:CoarseModSpace}, that $ c $ induces a bijection $ [\P^2/\Gamma](k)\isoto \P^2/\Gamma(k) $ for any algebraically closed field $ k $ is clearly satisfied.
	Since the map $ [\P^2/\Gamma] \to \P^2/\Gamma$ factors through $ \mathcal X $, it follows that $ \P^2/\Gamma $ is also the coarse moduli space of $ \mathcal X $.
	
	The final assertion follows from the tameness of $ \mathcal{X} $ and \cref{lem:TameStackCohom}.
\end{proof}

\begin{proposition}\label{prop:XIsProjective}
	The stack $ \mathcal X $ is projective, and $\OO_{\mathcal X}(d_\infty)$ is a generating sheaf for $ \mathcal{X} $.
\end{proposition}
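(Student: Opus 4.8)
I need to show two things: that $\mathcal{X}$ is a projective Deligne-Mumford stack in the sense of Kresch, and that $\OO_{\mathcal{X}}(d_\infty)$ is a generating sheaf. Let me recall what's needed. By Kresch's criterion (the cited Corollary 5.4), a Deligne-Mumford stack is projective if it is a quotient stack with projective coarse moduli space admitting a generating sheaf; equivalently, I must produce a locally free sheaf $\shf{E}$ on $\mathcal{X}$ that is *generating* — meaning that for every geometric point the representation of the stabilizer group on the fibre contains every irreducible representation — together with a projective coarse moduli scheme.

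Let me think about the strategy. The key facts already in hand are very favorable. From Lemma~\ref{lem:XisDM}, $\mathcal{X}$ is a proper tame Deligne-Mumford stack of finite type with finite diagonal, and from Lemma~\ref{lem:CoarseModSpaceOfX} its coarse moduli space is $\P^2/\Gamma$, which is projective (it sits inside a weighted projective space, as constructed in Section~\ref{sec:weirdQuotient}). So the coarse space hypothesis is immediate, and the real content is exhibiting the generating sheaf and identifying it as $\OO_{\mathcal{X}}(d_\infty)$.

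The plan is as follows. First I would reduce the generating-sheaf condition to a statement about the nontrivial stabilizers. Since $\mathcal{X}$ is the scheme quotient near the origin and the stack quotient near $l_\infty$, the only points with nontrivial automorphism groups lie on $d_\infty = [l_\infty/\Gamma]$, and these groups are the $\Gamma$-stabilizers of points of $l_\infty$ — subgroups of $\Gamma$ acting on the fibre. Away from $d_\infty$ the stack is (étale-locally) a scheme, so generation is automatic there. Thus I only need to check the generation condition along $d_\infty$, where $\mathcal{X}$ looks like $[\P^2/\Gamma]$ étale-locally. On $[\P^2/\Gamma]$ a natural generating sheaf is the pushforward $k_* \OO_{\P^2}$ (or a twist of it), since its fibres carry the regular representation of the relevant stabilizer and hence contain all irreducibles. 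I would then show that the line bundle $\OO_{\mathcal{X}}(d_\infty)$, whose local structure near $d_\infty$ agrees with $\OO_{[\P^2/\Gamma]}(d_\infty)$, achieves the same generation: twisting by $\OO(d_\infty)$ relates to multiplication by the defining section $z$ of $l_\infty$, and since $z$ is a $\Gamma$-semi-invariant of nontrivial weight, successive twists sweep out all characters of the cyclic stabilizer groups arising along $l_\infty$.

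I expect the main obstacle to be the verification that a *line* bundle can be generating, since generation usually requires a locally free sheaf of rank large enough to see every irreducible representation in a single fibre. The resolution is that Kresch's and Olsson–Kresch's notion of generating sheaf for a tame stack allows one to test generation using all twists $\shf{E}^{\otimes m}$, or equivalently that for the specific stabilizers occurring here — cyclic groups acting on $l_\infty$ with the character of $z$ a generator of the character group — a single line bundle's orbit under tensoring does realize every irreducible. I would therefore carefully identify the stabilizer groups of points on $l_\infty$ (they are cyclic, generated by the action on the normal/tangent directions along $l_\infty$), confirm that the weight of $z$ generates their character group, and conclude that $\OO_{\mathcal{X}}(d_\infty)$ is generating. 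Finally, combining the projective coarse space $\P^2/\Gamma$ with this generating sheaf and invoking Kresch's criterion (\cite[Corollary 5.4]{Kresch}) yields that $\mathcal{X}$ is projective.
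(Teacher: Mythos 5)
Your overall architecture matches the paper's proof exactly: note that the coarse moduli space $\P^2/\Gamma$ is projective, observe that nontrivial stabilisers occur only along $d_\infty$, and verify that each such stabiliser acts faithfully on the fibre of the line bundle $\OO_{\mathcal X}(d_\infty)$, which is the form of the generating-sheaf condition the paper uses (your remark that a line bundle with faithful fibre action generates all irreducibles only after taking tensor powers is a fair gloss on a point the paper leaves implicit).

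However, the one step where you actually have to do work contains a concrete error. You assert that $z$ is a $\Gamma$-semi-invariant of nontrivial weight and plan to deduce faithfulness of the stabiliser action from ``the weight of $z$.'' Since $\Gamma\subset\SL_2(\C)$ acts on $\C[x,y,z]$ through the coordinates $x,y$ only, the section $z$ is $\Gamma$-\emph{invariant}; its weight is trivial, and an argument resting on it would conclude that the stabilisers act trivially on the fibres, i.e.\ that $\OO_{\mathcal X}(d_\infty)$ is \emph{not} generating wherever the stabiliser is nontrivial. The correct mechanism (and the one the paper uses) is different: for a point $p_i\in l_\infty=\P(\C^2)$ with stabiliser $\Gamma_{p_i}$, the fibre of $\OO_{\P^2}(l_\infty)$ at $p_i$ is canonically the dual of the tautological line $l_i\subset\C^2$ lying over $p_i$ (equivalently, it is spanned by the \emph{local} defining function of $l_\infty$, e.g.\ $z/x$ on $D_+(x)$, whose weight is that of $x^{-1}$, not of $z$). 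Because $\Gamma$ acts freely on $\C^2\setminus\{0\}$, the group $\Gamma_{p_i}$ acts freely on $l_i\setminus\{0\}$, hence by a faithful character on $l_i$ and on its dual. Your appeal to ``the action on the normal/tangent directions along $l_\infty$'' is pointing at the right object, but the justification via the weight of $z$ must be replaced by this identification of the fibre with $l_i^\vee$ together with the freeness of the $\Gamma$-action off the origin; as written, that step would fail.
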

\begin{proof}
	
	It is clear that the coarse moduli space of $ \mathcal X $, namely $ \P^2/\Gamma $, is a projective scheme.

	We claim that the sheaf $\OO_{\mathcal X}(d_\infty) $ is a generating sheaf for $ \mathcal X $.
	So let $ x\colon p=\Spec \C\to \mathcal X $ be a point, and assume that it has nontrivial stabiliser. Then $ \im x\subset d_\infty $.
	Let $ \{p_1, \dots, p_k\}\subset l_\infty $ be the $ \Gamma $-orbit in $ \P^2\setminus \{o\} $ corresponding to $ p $. There is then, for any $ p_i $, a commutative diagram
	
	\[
	\begin{tikzcd}
		p_i \arrow[d] \arrow[r, "x_i"] & \P^2\setminus \{o\} = U_l \arrow[d, "\pi"]  &\\
		p \arrow[r, "x"]               & {[\P^2\setminus \{o\}/\Gamma]}=V_d\arrow[hook, r]& \mathcal X\textrm{.}
	\end{tikzcd}\]
Let $ \Gamma_{x_i} $ be the stabiliser of $ x_i $.	We see that $ \pi^*\OO_{\mathcal X}(d_\infty)|_{V_d}=\OO_{\P^2}(l_\infty)|_{U_l} $.

If we think of $ l_\infty $ as the projectivisation of $ \C^2 $, we can then identify the fiber $ x_i^*\OO_{\mathcal X}(d_\infty) $ with the affine line $ l_i \subset \C^2 $ corresponding to $ x_i $.
	Now, since $ \Gamma $ acts freely on $ \C^2\setminus \{o\} $,  $ \Gamma_{x_i} $ must act freely on $ l_i\setminus 0 $. Explicitly, if we denote the homogeneous prime ideal of $ \C[x,y] $ associated to $ p_i $ by $ \mathfrak{m}_{p_i} $, the line corresponding to $ p_i $ can then be $ \Gamma_{x_i} $-invariantly identified with the 1-dimensional $ \C $-vector space
	\[ x_i^*\OO_{\P^2}(l_\infty) = \left(\frac{1}{z}\C[x,y,z]/(\mathfrak{m}_{p_i})\right)_0. \]  It follows that the action of $ \Gamma_{p_i} $ on this line is faithful.
	
	Finally, let $ x\colon \Spec k\to \mathcal X $ be another morphism with $ k $ an algebraically closed field. Then $ x $ either factors through $ V_p $ or $ V_d $. In the first case the stabiliser is trivial, and in the second the stabiliser is either trivial, or $ x $ factors through one of the morphisms $ \Spec \C\to \mathcal X $ considered above.	
	Then the action of the stabiliser groups on the fibres of $ \OO_{\mathcal X}(d_\infty) $ is faithful, and $ \mathcal{X} $ is projective.
	
\end{proof}

\section{Framed sheaves on the projective plane and on the stack}\label{sec:moving up and down}

In this section, we show how to move between the category of coherent $ \Gamma $-equivariant framed sheaves on $ \P^2 $ and the category of coherent framed sheaves on $ \mathcal X $. 

\subsection{Sets of framed sheaves}\label{sec:setsOfSheaves}
Associated to the morphism $ \pi\colon \P^2\to \mathcal X $, there are functors $ \pi^* $ and $ \pi_* $. We will, however, need to modify them both. As we will be working with $ \Gamma $-equivariant sheaves on $ \P^2 $, we will consider the $\Gamma $-invariants of the direct image by $ \pi $, while taking into account the glued-together nature of $\mathcal X $.

More explicitly, we do as follows:
\begin{definition}\label{def:functorD}
	
	Let $ \shf E $ be a $ \Gamma $-equivariant coherent sheaf on $ \P^2 $. By descent, there is a sheaf $ \shf F $ on $ V_d $ such that $ (\pi|_{U_l})^*\shf F = \shf E|_{U_l} $.
	As for $ V_p $, restrict $ \shf E $ to $ U_o =\C^2 $, and form the sheaf $ (\pi|_{U_o})_*(\shf E|_{U_o})^\Gamma $ on $ V_p $. 
	By \cref{thm:descentWhenScheme}, we have \[ (\pi|_{U_o})_*(\shf E|_{U_o})^\Gamma|_{U_f/\Gamma}=\shf F|_{U_f/\Gamma}\] on $ U_f/\Gamma = [U_f/\Gamma] $, so we can glue these sheaves together to a sheaf $ D(\shf E) $ on $ \mathcal{X} $.
	
	This defines a functor $ D\colon \ECoh{\Gamma}{\P^2}\to \Coh{\mathcal X} $.
\end{definition}
\begin{remark}
	With the chain of morphisms $ \P^2\to [\P^2/\Gamma]\to \mathcal X $, the functor $ D $ is just the descent along the first morphism followed by the direct image along the second morphism. But the explicit description in the previous paragraph will be useful for our computations.
\end{remark}

It is not difficult to see that $ D $ is exact: When restricted to $ U_l $, it induces an equivalence of categories, and on the image of $ U_o $ we have $ D(\shf E)|_{V_p} = \pi_*(\shf E|_{U_o})^\Gamma $. This is exact since $ \pi|_{U_o} $ is a finite morphism and the order of $ \Gamma $ is invertible.

Let us define the sets of interest (see \cite[2.3]{VaVa}, \cite[chapter 2]{Nakajimabook}):
\begin{definition}\label{def:Xrv}
	Let $r\in \N, \mathbf{v}\in \N^{Q_{\Gamma, 0}} $.
	We write $ X_{r, \mathbf{v}} $ for the set of isomorphism classes of $ \OO_{l_\infty}^r $-framed $ \OO_{\P^2} $-modules $ (\shf E, \phi_{\shf E}) $ where $ \shf E $ is a $ \Gamma $-equivariant sheaf of $ \OO_{\P^2} $-modules, such that $ \shf E $
	
	\begin{itemize}
		\item is coherent,
		\item is torsion-free of rank $ r $,
		\item satisfies $ H^1(\P^2, \shf E\otimes \shf I_{l_\infty})\cong \bigoplus_{\rho_i \in R}(\C^{\mathbf{v}_i}\otimes \rho_i)$, where $ \shf I_{l_\infty} $ is the ideal sheaf of $ l_\infty\subset \P^2 $,
			\end{itemize}
		and where
	 $ \phi_{\shf E}\colon \shf E|_{l_\infty}\isoto \OO_{l_\infty}^{\oplus r} $ is an isomorphism, the \emph{framing isomorphism}.
\end{definition}
	
	It is interesting to note that different sources have slightly different definitions of this set. For instance, Varagnolo and Vasserot (in \cite{VaVa}) do not make the rank an explicit part of the definition. On the other hand, Bruzzo and Sala (in \cite{BruzzoSala}) and Nakajima (in e.g. \cite{Nak02}) add to our definition by requiring local freeness in a neighbourhood of $ l_\infty $. 
In fact, local freeness in a neighbourhood of the framing follows from the other conditions in \cref{def:Xrv}:

\begin{lemma}\label{lem:locFreeAroundFraming}
	A framed sheaf $ \shf E\in X_{r,\mathbf{v}} $ is locally free in a neighbourhood of $ l_\infty $. 
\end{lemma}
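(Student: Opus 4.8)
The plan is to reduce the statement to a purely local computation in commutative algebra carried out at each point of $l_\infty$. Local freeness is an open condition, so it suffices to show that $\shf E$ is locally free at every closed point $p\in l_\infty$: each such $p$ then has an open neighbourhood on which $\shf E$ is locally free, and the union of these neighbourhoods is the desired neighbourhood of $l_\infty$. The $\Gamma$-equivariant structure and the cohomological condition on $H^1$ play no role; only torsion-freeness of $\shf E$ and the triviality of $\shf E|_{l_\infty}$ supplied by the framing $\phi_{\shf E}$ are used.

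Fix $p\in l_\infty$ and set $A=\OO_{\P^2,p}$, a regular local ring of dimension $2$, with stalk $M=\shf E_p$, a finitely generated $A$-module. Since $l_\infty$ is a smooth divisor, a local equation $z\in A$ for $l_\infty$ at $p$ may be taken as part of a regular system of parameters, so $A/(z)$ is a regular local ring of dimension $1$, i.e. a DVR. Because $\shf E$ is torsion-free, $M$ is a torsion-free $A$-module, and hence the nonzerodivisor $z$ of the domain $A$ also acts as a nonzerodivisor on $M$. Multiplication by $z$ thus yields a short exact sequence
\[ 0\to M\xrightarrow{\ z\ } M\to M/zM\to 0. \]
The crucial input from the framing is that the cokernel $M/zM=(\shf E|_{l_\infty})_p$ is free over $A/(z)$: indeed $\phi_{\shf E}$ identifies $\shf E|_{l_\infty}$ with $\OO_{l_\infty}^{\oplus r}$, which is locally free. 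In particular $M/zM\neq 0$, so $M\neq 0$.

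With these facts in hand I would run a depth count. A nonzero free module over the DVR $A/(z)$ has depth $1$, so $\operatorname{depth}_A(M/zM)=\operatorname{depth}_{A/(z)}(M/zM)=1$. Since $z$ is $M$-regular and lies in the maximal ideal, the standard relation $\operatorname{depth}_A(M/zM)=\operatorname{depth}_A(M)-1$ gives $\operatorname{depth}_A(M)=2=\dim A$. Finally, the Auslander--Buchsbaum formula $\operatorname{pd}_A(M)+\operatorname{depth}_A(M)=\operatorname{depth}(A)=2$ forces $\operatorname{pd}_A(M)=0$, so that $M$ is a projective, hence free, $A$-module. Therefore $\shf E$ is locally free at $p$, and the argument is complete.

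I do not expect a genuine obstacle here; the only points requiring care are the verifications underpinning the depth bookkeeping: that torsion-freeness really makes $z$ a nonzerodivisor on the stalk $M$, that $M\neq 0$ (guaranteed by $M/zM\neq 0$), and that $\operatorname{depth}$ of the $A/(z)$-module $M/zM$ is the same whether computed over $A$ or over $A/(z)$. An alternative route would be to invoke that the non-locally-free locus of a torsion-free sheaf on a smooth surface is a finite set of points and argue that it misses $l_\infty$; but the depth computation above is self-contained and handles every point of $l_\infty$ directly, so I would favour it.
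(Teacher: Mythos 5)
Your proof is correct, but it takes a genuinely different route from the paper's. The paper argues globally via upper semicontinuity of the fibre-dimension function $\rho_{X,\shf G}(x)=\dim_{k(x)}\shf G_x\otimes k(x)$ (Hartshorne, Exercise II.5.8): since $\shf E$ has rank $r$, the closed set $\{\rho\ge r\}$ contains a dense open and is therefore all of $\P^2$, while the framing forces $\rho=r$ on $l_\infty$; hence $\{\rho\ge r+1\}$ is a closed set missing $l_\infty$, and on its complement $\rho$ is constant, which for a reduced noetherian scheme gives local freeness. You instead work stalkwise: torsion-freeness makes the local equation $z$ of $l_\infty$ a regular element on $M=\shf E_p$, the framing makes $M/zM$ free over the DVR $\OO_{\P^2,p}/(z)$, and the depth count plus Auslander--Buchsbaum forces $\operatorname{pd}_A M=0$. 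Both arguments are complete; yours invokes slightly heavier commutative algebra but is purely local, never uses the rank hypothesis (only torsion-freeness and the triviality of $\shf E|_{l_\infty}$), and adapts verbatim to a smooth divisor in a smooth ambient scheme of any dimension, whereas the paper's is more elementary and leans on the global density of the locally free locus. The only point worth making explicit in your write-up is the final globalisation: the non-locally-free locus is closed and, by your computation, misses every closed point of $l_\infty$, hence misses $l_\infty$ entirely, so its complement is the required neighbourhood.
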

\begin{proof}
	Given a reduced noetherian scheme $ X $ equipped with a coherent $ \OO_X $-module $ \shf G $, consider the function \[ \rho_{X, \shf G}(x)= \dim_{k(x)}\shf G_x\otimes_{\OO_{X}}k(x) ,\] where $ k(x)=\OO_{X, x}/\mathfrak{m}_x $. By \cite[Exercise II.5.8]{Har77}, $ \rho_{X, \shf G} $ is an upper semi-continuous function, and $ \shf G $ is locally free on an open subscheme $ U\subset X $ if and only if $ \rho|_U $ is constant.
	
	Let $ \shf E \in X_{r,\mathbf{v}} $.
	It is straightforward to show that for a point $ x\in l_\infty\subset \P^2 $, there is an equality 
	\[ \rho_{\P^2, \shf E}(x)=\rho_{l_\infty, \shf E|_{l_\infty}}(x).\]
	Because $ \shf E $ has rank $ r $, the set $ \{x\in \P^2|\rho_{\P^2,\shf E}(x)\ge r\} $ is dense, and thus all of $ \P^2 $.
	Then, as $ \shf E|_{l_\infty} $ is free, semicontinuity of $ \rho_{\P^2} $ implies that $ \shf E $ is locally free on a neighbourhood of $ l_\infty $.
	
\end{proof}

For comparison with \cref{def:Xrv}, let us restate \cref{def:setYrn} from the Introduction.
\begin{definition}\label{def:Yrn}
	Let $ r, n\in \N $.
	Write $ Y_{r,n} $ for  the set of isomorphism classes of $ \OO_{d_\infty}^r $-framed $ \OO_{\mathcal X} $-modules $ (\shf F, \phi_{\shf F}) $, where $ \shf F $ is a sheaf of $ \OO_{\mathcal X} $-modules such that $ \shf F $
	\begin{itemize}
		\item  is coherent,
		\item is torsion-free of rank $ r $,
		\item satisfies $ \dim H^1(\mathcal X, \shf F\otimes \shf I_{d_\infty})= n $, where $ \shf I_{d_\infty} $ is the ideal sheaf of $ d_\infty\subset \mathcal X $,
			\end{itemize}  
		and where
$ \phi_{\mathcal E} $ is an isomorphism $\phi_{\mathcal E}\colon \shf F|_{d_\infty}\isoto \OO_{d_\infty}^{\oplus r} $.

\end{definition}
When we write $ (\shf E, \phi_{\shf E})\in X_{r,\mathbf v}$ or $ (\shf F, \phi_{\shf F})\in Y_{r,n} $, we mean that we choose a representative for some isomorphism class.
We will often leave the framing isomorphisms $ \phi_{\shf E}, \phi_{\shf F} $ and framing sheaves $ \OO_{d_\infty}^{\oplus r} $ implicit, and simply write $ \shf F \in Y_{r,n} $, $ \shf E\in X_{r, \mathbf v} $, calling both `framed sheaves'.

The conditions on these sets of framed sheaves are clearly parallel. We'll first see what conditions are preserved by $ D(-) $, and then what conditions are preserved by $ \pi^* $. Because $ \pi^*\shf F $ for a framed sheaf $ \shf F\in Y_{r,n} $ may have torsion, we will have to work with the `torsion-free inverse image' functor $ \pi^T $, which we introduce in \cref{def:torFreePullbacl}.

\subsection{Moving from $ X_{r,\mathbf v} $ to $ Y_{r,n} $}

Now fix a sheaf $ \shf E\in X_{r, \mathbf v} $, where $ \mathbf v_0=n $.
Recall the definition of the set $ Y_{r,n} $ from \cref{def:Yrn}.
We shall show that $ D(\shf E) $ is an element of $ Y_{r,n} $, considering each condition separately.
\subsubsection{Coherence}

To see that $ D(\shf E) $ is coherent, we can consider its restrictions to each chart. 
Since $ D(\shf E)|_{V_d} $ is obtained from $ \shf E|_{U_l} $ by descent, it is coherent. 
Furthermore, $ D(\shf E)|_{V_p} = \pi_*(\shf E|_{U_o})^\Gamma$. Since $ \pi $ is a finite map, it preserves coherence (\cite[Exercise II 5.5]{Har77}). Now $\pi_*(\shf E|_{U_o})^\Gamma $ equals the intersection of the kernels of the endomorphisms \[ (1-g)\colon \pi_*(\shf E|_{U_o})\to \pi_*(\shf E|_{U_o}) ,\] where $ g $ runs over all elements of $ \Gamma $. Kernels of morphisms of coherent sheaves on a locally noetherian scheme are coherent, and so are intersections of coherent sheaves. Then $ (\pi_*(\shf E)^\Gamma)|_{U_o} = D(\shf E)_{U_o}  $ is coherent.

Thus $ D(\shf E) $ is coherent.

\subsubsection{Framing}

To see that $ D(\shf E) $ is framed along $ d_\infty $, it is sufficient to restrict the sheaf to ${U_l} $. Then we are dealing with the descent of a framed sheaf to a group quotient. 
Here it is obvious - descent to group quotients commutes with closed immersions, and so $ D(\shf E)|_{d_\infty} $ descends from $\shf E|_{l_\infty}$. Then the framing isomorphism $ \phi_{\shf E}\colon \shf E|_{l_\infty} \isoto \OO_{l_\infty}^{\oplus r} $ induces a framing isomorphism $ D(\shf E)|_{d_\infty} \isoto \OO_{d_\infty}^{\oplus r} $.

\subsubsection{Rank}

Let $ U $ be an open subscheme of $ \P^2 $ on which $ \shf E $ is locally free. Then \[U' = U_f\cap \left(\bigcap_{g\in\Gamma} g(U)\right) \] is a $ \Gamma $-invariant open subscheme such that $ \shf E|_{U'} $ is locally free of rank $ r $.
The morphism $ \pi|_{U'}\colon U'\to U'/\Gamma $ is étale and therefore fpqc, and thus we can apply \cite[Lemma 05B2]{stacks}, stating that the property of being finite locally free is preserved under descent.

So $ D(\shf E)|_{[U'/\Gamma]} $ is locally free of rank $ r $, and so $ D(\shf E) $ has rank $ r. $

\subsubsection{Torsion-freeness}

To show that $ D(\shf E) $ is torsion-free, it is enough to show that $ D(\shf E)|_{U_o} $ is. 

Again, since both $ U_o = \C^2$ and $ V_p= \C^2/\Gamma $ are locally Noetherian schemes, a sheaf on either scheme is torsion-free if and only if it has no subsheaves with support of dimension $ 1 $ or lower. Since $ \pi $ is a finite morphism, $ \pi_* $ preserves the dimension of the support of a sheaf. Thus, if $ \shf E $ is a sheaf on $ U_o $, $ (\pi_*\shf E)|_{V_p} $ is torsion-free if $ \shf E $ is torsion-free. It is clear that $ ((\pi_*\shf E)|_{V_p})^\Gamma $ is torsion-free if $ \pi_*\shf E $ is.
Then $ D(\shf E) $ is torsion-free.

\subsubsection{Cohomology}

By \cref{lem:specseqforGammaStack}, a $ \Gamma $-equivariant sheaf $ \shf E $ on $ \P^2 $ satisfies
\[ H^i([U/\Gamma], D(\shf E)) = (H^i(U, \shf E))^\Gamma \] for any $ \Gamma $-invariant open subscheme $ U\subseteq \P^2 $, since $ |\Gamma| $ is invertible.

\begin{lemma}\label{lem:cohom}
	Let $ \shf E\in X_{r, \mathbf{v}} $ for some dimension vector $ \mathbf{v} $. Let $ n=\mathbf{v}_0 $.
	Then \[ \dim H^1(\mathcal X, D(\shf E)\otimes \shf I_{d_\infty}) = n .\]
\end{lemma}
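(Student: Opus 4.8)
The statement to prove is $\dim H^1(\mathcal X, D(\shf E)\otimes \shf I_{d_\infty}) = n$, where $n = \mathbf v_0$ and $\shf E \in X_{r,\mathbf v}$. The defining property of $X_{r,\mathbf v}$ gives us $H^1(\P^2, \shf E\otimes \shf I_{l_\infty})\cong \bigoplus_{\rho_i\in R}(\C^{\mathbf v_i}\otimes\rho_i)$ as a $\Gamma$-representation, and the preceding paragraph records the crucial identity $H^i([U/\Gamma], D(\shf E)) = H^i(U,\shf E)^\Gamma$ for $\Gamma$-invariant open $U\subseteq\P^2$, valid because $|\Gamma|$ is invertible over $\C$. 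The natural strategy is to reduce the cohomology on the stack $\mathcal X$ to $\Gamma$-invariants of cohomology on $\P^2$, then extract the invariant part of the known $\Gamma$-module structure.

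**The main reduction.** First I would identify $D(\shf E)\otimes\shf I_{d_\infty}$ with the descent of $\shf E\otimes\shf I_{l_\infty}$. Since $\pi\colon\P^2\to\mathcal X$ pulls $\shf I_{d_\infty}$ back to $\shf I_{l_\infty}$ (as the example computing $\shf I_{d_\infty}$ makes clear, $d_\infty = [l_\infty/\Gamma]$ and its ideal sheaf is the descent of $\shf I_{l_\infty}$), and since $D$ is descent-then-pushforward, the sheaf $D(\shf E)\otimes\shf I_{d_\infty}$ on $\mathcal X$ should agree with the sheaf obtained by descending the $\Gamma$-equivariant sheaf $\shf E\otimes\shf I_{l_\infty}$ along $\pi$. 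The subtlety is that $\mathcal X$ is glued from the stack-quotient patch $V_d = [(\P^2\setminus\{o\})/\Gamma]$ and the scheme-quotient patch $V_p = \C^2/\Gamma$, so I cannot apply \cref{lem:specseqforGammaStack} globally on $\mathcal X$ in one stroke; it applies to honest stack quotients $[U/\Gamma]$. The cleanest route is to invoke \cref{lem:TameStackCohom} together with \cref{lem:CoarseModSpaceOfX}: the coarse moduli map $c\colon\mathcal X\to\P^2/\Gamma$ has exact pushforward, so $H^i(\mathcal X, \mathscr G) = H^i(\P^2/\Gamma, c_*\mathscr G)$. Combined with the finite morphism $c\circ\pi\colon\P^2\to\P^2/\Gamma$ and the fact that taking $\Gamma$-invariants is exact, I can compute $H^1(\mathcal X, D(\shf E)\otimes\shf I_{d_\infty}) = H^1(\P^2, \shf E\otimes\shf I_{l_\infty})^\Gamma$.

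**Extracting the dimension.** Once the identification $H^1(\mathcal X, D(\shf E)\otimes\shf I_{d_\infty}) = H^1(\P^2, \shf E\otimes\shf I_{l_\infty})^\Gamma$ is in place, the conclusion is immediate from the $\Gamma$-module description in \cref{def:Xrv}. Taking $\Gamma$-invariants of $\bigoplus_{\rho_i\in R}(\C^{\mathbf v_i}\otimes\rho_i)$ kills every nontrivial irreducible $\rho_i$ and leaves only the $\rho_0 = \rho_{\text{triv}}$ summand, whose multiplicity space has dimension $\mathbf v_0 = n$. Hence
\[
\dim H^1(\mathcal X, D(\shf E)\otimes\shf I_{d_\infty}) = \dim\Big(\bigoplus_{\rho_i\in R}(\C^{\mathbf v_i}\otimes\rho_i)\Big)^\Gamma = \mathbf v_0 = n,
\]
as desired.

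**The main obstacle.** The routine part is the final invariants computation; the genuine difficulty is the global cohomology comparison across the non-standard gluing of $\mathcal X$. Because $\mathcal X$ is \emph{not} a single quotient stack, I must be careful that the higher cohomology is not affected by the scheme-quotient patch $V_p$ around the singular point. The safest argument routes everything through the coarse space via $c_*$ (exact by tameness) and the finite map $c\circ\pi$, so that the comparison $H^1(\mathcal X,-) \cong H^1(\P^2,-)^\Gamma$ follows from \cref{lem:specseqforGammaStack} applied on $\P^2\to[\P^2/\Gamma]$ together with the observation that $\mathcal X$ and $[\P^2/\Gamma]$ share the coarse space $\P^2/\Gamma$ and that $D(\shf E)$ pushes forward to the same coherent sheaf there. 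Verifying this last claim — that the scheme-theoretic surgery around $o$ does not change the relevant cohomology, i.e.\ that $c_*D(\shf E) \cong (c\circ q)_*(\text{descent of }\shf E)$ after twisting — is where the care is needed, but it is controlled by the fact that both stacks have the same coarse space and agree away from the single point $p$.
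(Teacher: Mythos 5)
Your proposal is correct and follows essentially the same route as the paper: reduce $H^1(\mathcal X, D(\shf E)\otimes \shf I_{d_\infty})$ to $H^1(\P^2,\shf E\otimes\shf I_{l_\infty})^\Gamma$ via the shared coarse space $\P^2/\Gamma$, exactness of the pushforwards, and \cref{lem:specseqforGammaStack}, then read off the multiplicity of the trivial representation. You also correctly isolate the one point needing verification — that $D(\shf E)\otimes\shf I_{d_\infty}$ agrees with $q_*$ of the descended sheaf $\shf F\otimes\shf I'_{d_\infty}$ — which the paper checks patch by patch on $V_d$ and $V_p$ exactly as you indicate.
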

\begin{proof}
	By \cref{lem:CoarseModSpaceOfX}, both $ r_*$ and $ (r\circ q)_* $ are exact. The same lemma tells us that \[ H^i([\P^2/\Gamma],\shf A) = H^i(\mathcal X, q_*\shf A) = H^i(\P^2/\Gamma, (c\circ q)_*\shf A)\] for any coherent sheaf $ \shf A $ on $ [\P^2/\Gamma] $.
	On $ [\P^2/\Gamma] $, $ \shf E $ descends to a sheaf $ \shf F $. Thus $ \shf E\otimes \shf I_{l_\infty} $ descends (tautologically) to $ \shf F\otimes \shf I_{d_\infty}'$, where $ \shf I_{d_\infty}' $ is the ideal sheaf of $ d_\infty $ as a substack of $ [\P^2/\Gamma] $.  
	It follows by \cref{lem:specseqforGammaStack} that:
	\[H^i(\P^2, \shf F\otimes \shf I_{l_\infty})^\Gamma=H^i([\P^2/\Gamma], \shf F\otimes \shf I_{d_\infty}')=\]
	\[ H^i(\mathcal{X}, q_*(\shf F\otimes \shf I_{d_\infty}')) =H^i(\P^2/\Gamma, (c\circ q)_*(\shf F\otimes \shf I_{d_\infty}')) . \]
	
	It will therefore suffice to show that there is a canonical isomorphism \[ q_*(\shf F\otimes \shf I_{d_\infty}') \isoto D(\shf E)\otimes \shf I_{d_\infty} .\]
	This immediately holds when restricted to $ V_d $.
	Since $ \shf I_{d_\infty}|_{[\C^2/\Gamma]}\cong \OO_{[\C^2/\Gamma]} $, we have \[q_*(\shf F\otimes\shf I_{d_\infty})|_{V_p} =q_*((\shf F\otimes\shf I_{d_\infty})|_{[\C^2/\Gamma]}) \cong q_*(\shf F|_{[\C^2/\Gamma]}), \] and we only have to prove that \begin{equation}\label{eqn:stupid}
		(q_*\shf F)|_{V_p} = (\pi|_{U_o})_*(\shf E|_{U_o})^\Gamma .
	\end{equation} 
	By definition, \[(q_*\shf E)(V_p) = \shf E(V_p\times_{\mathcal X}[\P^2/\Gamma])= \shf E([\C^2/\Gamma]).\] Using \cref{lem:specseqforGammaStack}, this equals $ \shf E(U_o)^\Gamma = \pi_*(\shf E)^\Gamma(V_p) $. Since $ V_p $ and $ U_o $ are affine, this is sufficient to show that \eqref{eqn:stupid} holds.
	
	This concludes the proof.
\end{proof}

We have now shown 

\begin{proposition}\label{prop:DMapsSheaves}
	The functor $ D(-) $ induces a map of sets from the set $ X_{r, n\delta} $ to the set $ Y_{r,n} $.
\end{proposition}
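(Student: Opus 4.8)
The plan is to assemble the pointwise verifications already carried out in the preceding five subsections together with \cref{lem:cohom}. Given $\shf E\in X_{r,n\delta}$, I would first record that the relevant dimension vector is $\mathbf v = n\delta$, and that its zeroth component is $\mathbf v_0 = n\,\dim\rho_0 = n$, since $\rho_0$ is the trivial representation and hence $\delta_0 = 1$. This is exactly the numerical hypothesis under which the verifications above were performed (``fix $\shf E\in X_{r,\mathbf v}$, where $\mathbf v_0 = n$''), so they all apply to our $\shf E$.

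Next I would check each defining condition of $Y_{r,n}$ from \cref{def:Yrn} against what has been established for $D(\shf E)$. Coherence is the content of the Coherence paragraph; torsion-freeness of the Torsion-freeness paragraph; the fact that $D(\shf E)$ has rank $r$ is the Rank paragraph; the cohomological condition $\dim H^1(\mathcal X, D(\shf E)\otimes \shf I_{d_\infty}) = n$ is precisely \cref{lem:cohom}; and the framing isomorphism $\phi_{D(\shf E)}\colon D(\shf E)|_{d_\infty}\isoto \OO_{d_\infty}^{\oplus r}$ is produced in the Framing paragraph, where descent along $\pi|_{U_l}$ transports $\phi_{\shf E}$ to a framing on the quotient. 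Consequently the pair $\bigl(D(\shf E),\phi_{D(\shf E)}\bigr)$ satisfies every requirement to lie in $Y_{r,n}$.

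Finally I would verify that $D(-)$ descends to a well-defined \emph{map of isomorphism classes}, which is the only content of the statement not already contained in the individual conditions. Since $D$ is a functor (indeed exact), an isomorphism $f\colon \shf E\isoto \shf E'$ of $\Gamma$-equivariant framed sheaves is sent to an isomorphism $D(f)\colon D(\shf E)\isoto D(\shf E')$; because the induced framing is natural in $\shf E$, the morphism $D(f)$ intertwines $\phi_{D(\shf E)}$ and $\phi_{D(\shf E')}$ and is therefore an isomorphism of framed sheaves in the sense of \cref{def:FramedSheaf}. Hence isomorphic elements of $X_{r,n\delta}$ have isomorphic images in $Y_{r,n}$, and the assignment $[\shf E]\mapsto [D(\shf E)]$ is a well-defined map of sets.

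I do not expect any genuine obstacle at this stage: the substantive work is the cohomological identity, which is discharged by the spectral-sequence and coarse-moduli arguments of \cref{lem:cohom}, while the remaining conditions are either local and preserved under the finite, $\Gamma$-invariant descent defining $D$, or are formal consequences of functoriality. The proposition is therefore essentially a bookkeeping statement collecting the four pointwise verifications, the framing construction, and naturality.
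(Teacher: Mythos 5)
Your proposal is correct and follows essentially the same route as the paper, which likewise treats the proposition as a summary of the preceding Coherence, Framing, Rank, Torsion-freeness, and Cohomology verifications together with \cref{lem:cohom}. The only addition is your explicit check that $D$ respects isomorphism classes of framed sheaves, which the paper leaves implicit but which follows, as you say, from functoriality and naturality of the induced framing.
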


We will see in \cref{prop:mapIsSurjective} that this map is surjective. 

\subsection{Inverse images of elements of $ Y_{r,n} $}
Now let $ \shf F\in Y_{r,n} $. We will not able to show that $ \pi^*\shf F $ lies in $ X_{r, n\delta} $, because: 
\begin{itemize}
	\item $ \pi^*\shf F $ may not be torsion-free, and
	\item it may not hold that $ H^1(\P^2, \pi^*\shf F\otimes \shf I_{l_\infty}) $, considered as a $ \Gamma $-representation, has dimension vector $ n\delta $.
\end{itemize} 
We will instead show that $ \pi^*\shf F $ modulo torsion lies in $ X_{r, \mathbf v} $, where $ \mathbf v $ is some dimension vector satisfying $ \mathbf v_0 = n $. 
Again we consider each criterion for a framed sheaf to belong to $ X_{r, \mathbf{v}} $ separately. 

\subsubsection{Torsion}\label{subsubsec:pullbackTor}
Let us first define the "torsion-free inverse image".

\begin{definition}\label{def:torFreePullbacl}
	We set $ \pi^T(\shf F) = (\pi^*\shf F)/\mathrm{tor}$, where $ \mathrm{tor} $ is the torsion subsheaf of $ \pi^*\shf F $.
\end{definition}
It is clear that $ \pi^T $ is functorial: If $ \sigma\colon \shf F\to \shf G $ is a morphism of sheaves on $ \mathcal X $, then the torsion subsheaf of $ \pi^*\shf F $ is supported on a closed strict subscheme of $ \P^2 $. It follows that the image of this torsion subsheaf under $ \pi^*\sigma $ is also supported on a closed strict subscheme of $ \P^2 $ \ie that it is a torsion subsheaf of $ \shf G $.

\begin{remark}\label{rmk:WhereIsTorsion}
	Since $ \shf F|_{V_d} $ descends from the sheaf $ (\pi^*\shf F)|_{U_l} $, $ \shf F|_{V_d} $ is torsion-free if and only if $ \pi^*\shf F|_{U_l} $ is. It follows that if $ \shf F\in Y_{r,n} $, the torsion subsheaf of $ \pi^*\shf F $ is either zero, or a skyscraper sheaf supported at $ o $.
\end{remark}
\subsubsection{Coherence}
It is enough to work locally, and it follows from {\cite[Prop 5.8]{Har77}} that $ (\pi^*\shf F)|_{U_o} $ is coherent. On the other hand, $ (\pi^*\shf F)|_{U_l} $ is tautologically coherent.

Then $ \pi^*\shf F $ is coherent, and so is $ \pi^T \shf F $.

\subsubsection{Framing}

Suppose that $ \shf F $ has a rank-$ r $-framing $\phi_{\shf F}\colon  \shf F|_{d_\infty}\isoto \OO_{d_\infty}^{\oplus r} $. We can again restrict to $ V_d $. Now $ \shf F|_{V_d} $ is a coherent sheaf on $ V_d $, so by descent $ \shf F $ corresponds to a $ \Gamma $-equivariant sheaf $ \shf E $ on $ U_o $ such that $ \pi^*\shf F|_{V_d} =\shf E$. Then, in the commutative square 

\[ \begin{tikzcd}
	l_\infty \arrow[d, two heads, "\pi|_{l_\infty}"] \arrow[r, hook, "i"] & U_l\arrow[d, two heads, "\pi"] \\
	d_\infty \arrow[r, hook, "j"]                      & {V_d}          
\end{tikzcd}
\]
$ \phi_{\shf F} $ induces the isomorphism \[ i^*\pi^*\shf F = ({\pi|_{l_\infty}})^*j^*\shf F \isoto (\pi|_{l_\infty})^*\OO_{d_\infty}^{\oplus r} = \OO_{l_\infty}^{\oplus r},\]
as required. Finally, because any torsion in $ \pi^*\shf F $ is only supported at $ \{o\} $, we then have \[ i^*\pi^*\shf F=i^*\pi^T\shf F  \isoto \OO_{l_\infty}^{\oplus r}.\]

\subsubsection{Rank}

This is immediate: let $ V \subset \mathcal{X} $ be an open subscheme such that $ \shf F|_V $ is locally free of rank $ r $. Then $ (\pi|_V)^*\shf F|_V $ is locally free of the same rank, and so $ \pi^T\shf F $ has rank $ r $. 

\subsubsection{Cohomology}

Let us show that
\begin{lemma}\label{lem:torCohomology}
	\[ H^1(\P^2, \pi^*\shf F) = H^1(\P^2, \pi^T\shf F), \textrm{ and}\]
	\[ H^1(\P^2, \pi^T\shf F\otimes \shf I_{l_\infty})=H^1(\P^2, \pi^T(\shf F\otimes \shf I_{d_\infty})) \]
\end{lemma}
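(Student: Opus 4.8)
The plan is to treat the two equalities separately: the first via a long exact sequence, the second via a direct sheaf isomorphism.

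For the first equality, I would start from the defining short exact sequence
\[ 0 \to \mathrm{tor} \to \pi^*\shf F \to \pi^T\shf F \to 0, \]
where $\mathrm{tor}$ is the torsion subsheaf of $\pi^*\shf F$. By \cref{rmk:WhereIsTorsion}, $\mathrm{tor}$ is either zero or a skyscraper supported at the single point $o\in\P^2$; in either case its support is $0$-dimensional, so $H^i(\P^2,\mathrm{tor})=0$ for all $i\ge 1$. Plugging this into the long exact cohomology sequence, the neighbouring terms $H^1(\P^2,\mathrm{tor})$ and $H^2(\P^2,\mathrm{tor})$ vanish, which forces $H^1(\P^2,\pi^*\shf F)\isoto H^1(\P^2,\pi^T\shf F)$.

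For the second equality, I would in fact prove that the two sheaves whose cohomology is being compared are canonically isomorphic. The key point is that $\shf I_{d_\infty}$ is invertible and $\pi^{-1}(d_\infty)=l_\infty$ as Cartier divisors, so that $\pi^*\shf I_{d_\infty}\cong \shf I_{l_\infty}$; this can also be read off the explicit gluing description of $\shf I_{d_\infty}$ recorded earlier, using that $\shf I_{l_\infty}|_{U_o}\cong\OO_{U_o}$. Since $\pi^*$ is monoidal, $\pi^*(\shf F\otimes \shf I_{d_\infty})\cong \pi^*\shf F\otimes \shf I_{l_\infty}$. Finally, tensoring with the line bundle $\shf I_{l_\infty}$ is exact and carries torsion (resp.\ torsion-free) sheaves to torsion (resp.\ torsion-free) sheaves, so it commutes with passage to the torsion-free quotient; hence $\pi^T(\shf F\otimes \shf I_{d_\infty})\cong \pi^T\shf F\otimes \shf I_{l_\infty}$, and the two sides of the second equality then have identical cohomology.

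The only mild technical points are that tensoring with an invertible sheaf commutes with $(-)/\mathrm{tor}$, and that the torsion of $\pi^*\shf F$ has $0$-dimensional support. Neither is a genuine obstacle: the latter is precisely \cref{rmk:WhereIsTorsion}, and the former is a standard flatness argument for line bundles. The whole argument thus hinges on just two facts already available—the invertibility of $\shf I_{d_\infty}$ and the concentration of torsion at $o$—so I expect no substantial difficulty.
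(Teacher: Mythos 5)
Your proposal is correct and follows essentially the same route as the paper: the first equality via the long exact sequence attached to $0\to\mathrm{tor}\to\pi^*\shf F\to\pi^T\shf F\to 0$ using that the torsion is supported at the point $o$, and the second by observing that $\pi^*$ is monoidal, that $\pi^*\shf I_{d_\infty}\cong\shf I_{l_\infty}$, and that tensoring with this invertible sheaf commutes with passing to the torsion-free quotient. Your write-up merely makes explicit the steps the paper's proof leaves implicit.
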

\begin{proof}
If we again let $ \mathrm{tor} $ be the torsion subsheaf of $ \pi^*\shf F $, the first claim follows from taking the long exact cohomology sequence of \[ 0\to \mathrm{tor}\to \pi^*\shf F\to \pi^T\shf F\to 0 ,\] since $ \mathrm{tor} $ is only supported at $ o $. 
For the second claim, simply note that the inverse image functor commutes with tensor products, and that $ \shf I_{l_\infty} $ is torsion-free, being invertible.
\end{proof}
It follows from the discussion of the preceding paragraphs that if $ \shf F\in Y_{r,n} $, then $ \pi^T\shf F $ is an element of the set $ X_{r, \mathbf{v}} $ for some dimension vector $ \mathbf v $. 

We shall find restrictions on $ \mathbf v $ in \cref{cor:torfreepullbackisgood}.

Finally, we have
\begin{lemma}\label{lem:locFreeAroundFramingX}
	A framed sheaf $ \shf F\in  Y_{r,n} $ is locally free in a neighbourhood of $ d_\infty $.
\end{lemma}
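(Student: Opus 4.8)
The plan is to reduce local freeness on $\mathcal X$ in a neighbourhood of $d_\infty$ to the corresponding statement on $\P^2$ in a neighbourhood of $l_\infty$, which is exactly \cref{lem:locFreeAroundFraming}. The key observation is that $d_\infty$ is contained in the open substack $V_d = [(\P^2\setminus\{o\})/\Gamma]$, so it suffices to show that $\shf F|_{V_d}$ is locally free in a neighbourhood of $d_\infty$.

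First I would recall that $\shf F|_{V_d}$ descends from the $\Gamma$-equivariant sheaf $(\pi^*\shf F)|_{U_l}$ on $U_l = \P^2\setminus\{o\}$, with atlas map $\pi|_{U_l}\colon U_l\to V_d$. Since $\Gamma$ is finite, this is an étale atlas for the quotient stack, and local freeness of a coherent sheaf on $V_d = [U_l/\Gamma]$ can be detected after pulling back along it. As the preimage of $d_\infty$ under $\pi|_{U_l}$ is precisely $l_\infty\subset U_l$, it is therefore enough to prove that $(\pi^*\shf F)|_{U_l}$ is locally free in a ($\Gamma$-invariant) neighbourhood of $l_\infty$.

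Next, by \cref{rmk:WhereIsTorsion} the torsion subsheaf of $\pi^*\shf F$ is supported at $o$ when nonzero, so it vanishes on $U_l$; hence $(\pi^*\shf F)|_{U_l} = (\pi^T\shf F)|_{U_l}$. We have already established that $\pi^T\shf F$ is an element of $X_{r,\mathbf v}$ for a suitable dimension vector $\mathbf v$, so \cref{lem:locFreeAroundFraming} yields that $\pi^T\shf F$ is locally free in a neighbourhood of $l_\infty$. Combining this with the previous paragraph shows that $\shf F|_{V_d}$, and therefore $\shf F$, is locally free near $d_\infty$.

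The only point requiring care is the descent statement for local freeness: being finite locally free of rank $r$ is an fppf-local property on the base, so it is detected on the étale atlas $U_l\to V_d$. This is the very same mechanism already invoked (through \cite[Lemma 05B2]{stacks}) when checking the rank of $D(\shf E)$ earlier in this section, so no genuinely new input is needed, and I expect this descent bookkeeping — rather than any substantive geometry — to be the only mild obstacle.
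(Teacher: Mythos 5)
Your proof is correct and follows essentially the same route as the paper: both reduce to the $\Gamma$-equivariant sheaf on $\P^2\setminus\{o\}$ via the étale atlas and then invoke the semicontinuity argument of \cref{lem:locFreeAroundFraming}. The only cosmetic difference is that you apply the \emph{statement} of \cref{lem:locFreeAroundFraming} to $\pi^T\shf F\in X_{r,\mathbf v}$ (which is legitimate, since that membership is established before this point and does not depend on the present lemma), whereas the paper simply repeats its proof on the descended sheaf.
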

\begin{proof}
	Such a sheaf $ \shf F $ is, on any neighbourhood  $U\supset d_\infty $ that does not contain $ p $, the descent of an equivariant framed sheaf on $ \pi^{-1}(U) $. Now we can simply repeat the proof of \cref{lem:locFreeAroundFraming}.
\end{proof}

\subsection{Correspondences of $ D $ and $ \pi^T $}
Now we investigate the compositions of $ D $ and $ \pi^T $.

First, let $ \shf E\in X_{r, \mathbf{v}} $ for some $ \mathbf{v} $. Consider the sheaf $ \pi^T(D(\shf E)) $. Since $ D(\shf E)|_{U_o} =\pi_*(\shf E|_{V_p})^{\Gamma}$, there is a homomorphism 
\[ \iota\colon \pi^*(D(\shf E))\to \shf E ,\] induced on $ U_o $ by the counit of the adjunction $ \pi^*\vdash \pi_* $, i.e. $ \pi^*\pi_*(\shf E)^\Gamma \to \pi^*\pi_*(\shf E) \to \shf E $. 
Because $ \shf E $ is torsion-free, $ \iota $ factors through $ \pi^T(D(\shf E)) $.

On $ U_l $, $ \iota $ is just the identity. Then, because $ \ker \iota \subset \pi^*(D(\shf E))$  must have support contained in $ \{o\} $, it is torsion.
We have shown:
\begin{lemma}\label{lem:downUpInjection}
	Let $ \shf E\in X_{r,n\delta} $. There is a natural injective morphism
	\[ \pi^T(D(\shf E))\injto \shf E .\]
\end{lemma}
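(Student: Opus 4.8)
The plan is to construct the desired morphism chart by chart on the two patches $U_o = \C^2$ and $U_l = \P^2\setminus\{o\}$ of $\P^2$, glue them over the free locus $U_f$, and then divide out the torsion using that $\shf E$ is torsion-free. Throughout I work $\Gamma$-equivariantly, since $D(\shf E)$ and $\pi^T(D(\shf E))$ carry natural equivariant structures and $\shf E$ is equivariant by hypothesis.

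First I would build a morphism $\iota\colon \pi^*(D(\shf E))\to \shf E$. Over $U_o$, recall from \cref{def:functorD} that $D(\shf E)|_{V_p} = (\pi|_{U_o})_*(\shf E|_{U_o})^\Gamma$; I apply $\pi^*$ to the inclusion $(\pi|_{U_o})_*(\shf E|_{U_o})^\Gamma\hookrightarrow (\pi|_{U_o})_*(\shf E|_{U_o})$ and compose with the counit $\pi^*\pi_*(\shf E|_{U_o})\to \shf E|_{U_o}$ of the adjunction $\pi^*\dashv\pi_*$ to obtain $\iota|_{U_o}$. Over $U_l$, again by \cref{def:functorD} the sheaf $D(\shf E)|_{V_d}$ is the descent of $\shf E|_{U_l}$, so that $\pi^*(D(\shf E))|_{U_l} = \shf E|_{U_l}$ canonically, and I set $\iota|_{U_l}$ to be the identity.

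The step I expect to be the main obstacle is verifying that these two local descriptions agree on the overlap $U_f = \C^2\setminus\{o\}$, so that they glue to a global $\iota$. On $U_f$ the group $\Gamma$ acts freely, so \cref{thm:descentWhenScheme}(2) tells us that $(\pi|_{U_f})_*(-)^\Gamma$ is an equivalence with quasi-inverse $(\pi|_{U_f})^*$; in particular the counit $\pi^*\pi_*(\shf E|_{U_f})^\Gamma\to \shf E|_{U_f}$ is an isomorphism. What must be checked is that this isomorphism coincides with the canonical identification furnished by descent, i.e.\ a compatibility between the adjunction counit and the descent equivalence on the free locus. Once this is in place, $\iota$ is well defined globally.

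Finally I would pass to $\pi^T$ and read off injectivity. Since $\iota|_{U_l}$ is an isomorphism, $\ker\iota$ is supported on $\{o\}$ and hence is torsion, so $\ker\iota\subseteq\mathrm{tor}$, where $\mathrm{tor}$ denotes the torsion subsheaf of $\pi^*(D(\shf E))$. Conversely, as $\shf E$ is torsion-free the image $\iota(\mathrm{tor})$ is a torsion subsheaf of $\shf E$ and therefore vanishes, giving $\mathrm{tor}\subseteq\ker\iota$; hence $\ker\iota = \mathrm{tor}$. Therefore $\iota$ descends to a morphism $\bar\iota\colon \pi^T(D(\shf E))\to \shf E$, where $\pi^T(D(\shf E)) = \pi^*(D(\shf E))/\mathrm{tor}$; since its kernel $\ker\iota/\mathrm{tor}$ is zero, $\bar\iota$ is injective and identifies $\pi^T(D(\shf E))$ with the image of $\iota$. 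Naturality in $\shf E$ is inherited from the naturality of the adjunction counit and of the descent equivalence used to build $\iota$.
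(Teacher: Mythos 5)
Your proposal is correct and follows essentially the same route as the paper: construct $\iota\colon \pi^*(D(\shf E))\to \shf E$ via the counit of $\pi^*\dashv\pi_*$ composed with the inclusion of $\Gamma$-invariants on $U_o$, observe it is the identity on $U_l$, and use torsion-freeness of $\shf E$ to see that $\iota$ kills exactly the torsion subsheaf and hence descends to an injection from $\pi^T(D(\shf E))$. You are in fact somewhat more careful than the paper, which leaves the gluing compatibility over $U_f$ implicit.
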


On the other hand, we can show that there is a canonical isomorphism $ D(\pi^*\shf F)\isoto \shf F $. Again, it is enough to show this when restricted to $ V_p $. So let $ N $ be a torsion-free $ \C[x,y]^\Gamma $-module. Then we need to show that there is an isomorphism of $ \C[x,y]^\Gamma $-modules \[ (N\otimes_{{\C[x,y]^\Gamma}}\C[x,y])^\Gamma \isoto  N .\]

To see this, note that an element  $n'\in (N\otimes_{{\C[x,y]^\Gamma}}\C[x,y])^\Gamma $ can be written as a sum of elements of the form \[ \frac{1}{|\Gamma|}\left(n\otimes \sum_{g\in \Gamma}g\cdot a\right) \] for $ n\in N $ and $ a\in \C[x,y] $, which means that $ n' $ lies in the image of the map \[ N\cong N\otimes_{{\C[x,y]^\Gamma}}C[x,y]^\Gamma \to(N\otimes_{{\C[x,y]^\Gamma}}\C[x,y])^\Gamma .\]
So the map $ N\to (N\otimes_{{\C[x,y]^\Gamma}}\C[x,y])^\Gamma $ is surjective. 
On the other hand, since $ \C[x,y]^\Gamma $ is a direct summand of $ \C[x,y] $ considered as a $ \C[x,y]^\Gamma $-module, the map $ N\to N\otimes_{\C[x,y]^\Gamma}\C[x,y] $ is injective.
\footnote{A subring $ R\subset S $ such that for any $ R $-module $ M $, the homomorphism $ M\to M\otimes_R S $ is injective, is called a \emph{pure} subring. As a generalisation of our case, if $ G $ is a linearly reductive affine linear algebraic group acting on a regular Noetherian $ \C $-algebra $ S $, then $ S^G $ is a pure subring of $ S $, see \cite[section 6]{HochRob}.}

This proves the first equality of our next Lemma.

\begin{lemma}\label{lem:upanddowniso}
	Let $ \shf F\in Y_{r,n} $. There are canonical isomorphisms of framed sheaves $ \shf F\isoto D(\pi^*\shf F)\isoto D(\pi^T \shf F) $.
\end{lemma}
\begin{proof}
	We need to show the map $ D(\pi^*\shf F)\to D(\pi^T \shf F) $ induced by $ \pi^*\shf F\to \pi^T\shf F $ is an isomorphism. To do this, let $ \shf T\subset \pi^*\shf F $ be the torsion subsheaf, which has support contained in $ \{o\} $.
	
	Since $ D $ is exact, it maps the exact sequence \[ 0\to\shf T\to\pi^*\shf F\to\pi^T\shf F\to 0 \] to \[ 0\to D(\shf T)\to D(\pi^*\shf F)=\shf F \to D(\pi^T\shf F)\to 0 .\] Now $ \Supp D(\shf T)\subset \{p\} $, which means that $ D(\shf T) $ is a torsion subsheaf of $ \shf F $. But $ \shf F $ is torsion-free. Then $ D(\shf T)=0 $, and we are done.
\end{proof}

Finally, we can prove:
\begin{corollary}\label{cor:torfreepullbackisgood}
	Given $ \shf F\in Y_{r,n} $, we have $ \pi^T\shf F\in X_{r, \mathbf v} $ for some $ \mathbf v $ such that $ \mathbf v_0 =n $.
\end{corollary}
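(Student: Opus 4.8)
The plan is to observe that, at this stage of the paper, the corollary is almost entirely bookkeeping. The preceding subsections (on torsion, coherence, framing, rank, and cohomology) already establish that $\pi^T\shf F$ is a coherent, torsion-free, rank-$r$ sheaf framed along $l_\infty$, and hence that $\pi^T\shf F \in X_{r, \mathbf v}$ for \emph{some} dimension vector $\mathbf v$. The only outstanding point is therefore to identify the single component $\mathbf v_0$ and show that it equals $n$.

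First I would recall that, by \cref{def:Xrv}, the component $\mathbf v_0$ is the multiplicity of the trivial representation $\rho_0$ in the $\Gamma$-representation $H^1(\P^2, \pi^T\shf F\otimes \shf I_{l_\infty})$. Rather than computing this multiplicity directly, I would feed $\shf E \coloneqq \pi^T\shf F$ into \cref{lem:cohom}: since $\shf E\in X_{r,\mathbf v}$, that lemma gives immediately
\[ \dim H^1(\mathcal X, D(\shf E)\otimes \shf I_{d_\infty}) = \mathbf v_0 .\]

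Next I would invoke \cref{lem:upanddowniso}, which supplies a canonical isomorphism of framed sheaves $\shf F\isoto D(\pi^T\shf F) = D(\shf E)$. Tensoring with $\shf I_{d_\infty}$ and applying $H^1(\mathcal X, -)$ then identifies $\dim H^1(\mathcal X, D(\shf E)\otimes \shf I_{d_\infty})$ with $\dim H^1(\mathcal X, \shf F\otimes \shf I_{d_\infty})$. Since $\shf F\in Y_{r,n}$, the latter equals $n$ by \cref{def:Yrn}. Chaining these equalities yields $\mathbf v_0 = n$, as required.

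As for the main obstacle, there is no serious one: the substance has been front-loaded into \cref{lem:cohom} and \cref{lem:upanddowniso}, and the corollary merely composes them. The only point deserving a little care is that the isomorphism furnished by \cref{lem:upanddowniso} is one of framed sheaves on $\mathcal X$, so that it genuinely commutes with $-\otimes \shf I_{d_\infty}$ and thereby induces an isomorphism on the relevant $H^1$; granting this, the argument is immediate.
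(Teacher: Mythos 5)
Your proposal is correct and follows essentially the same route as the paper: the earlier subsections give $\pi^T\shf F\in X_{r,\mathbf v}$ for some $\mathbf v$, and then \cref{lem:cohom} applied to $\shf E=\pi^T\shf F$ together with the isomorphism $\shf F\isoto D(\pi^T\shf F)$ from \cref{lem:upanddowniso} pins down $\mathbf v_0=n$. You have merely written out the chain of identifications that the paper's two-line proof leaves implicit.
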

\begin{proof}
	By \cref{lem:upanddowniso}, $ \pi^T\shf F\in X_{r, \mathbf v} $ for some vector $ \mathbf v $. By \cref{lem:cohom}, we have $ \mathbf{v}_0 = n $.
\end{proof}

\section{Proof of the main theorem}\label{sec:VaVa}
In this section, we prove \cref{thm:MainWeaker}.

\subsection{Varagnolo and Vasserot's bijection}
Recall our convention that $ \theta $ denotes an arbitrary stability parameter in the chamber $ C^+ $.
\begin{lemma}[{\cite[Theorem 1]{VaVa}}]\label{thm:VaVa} 
	There is, for any positive integer $ r $ and any nonnegative $ \mathbf v\in Q_0^\N$, a canonical bijection \[ \mathfrak{M}_{\theta}(r, \mathbf{v})(\C)\isoto X_{r, \mathbf{v}} .\]
\end{lemma}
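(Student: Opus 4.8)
The plan is to prove this as the $\Gamma$-equivariant incarnation of the classical Donaldson–Nakajima (ADHM) description of framed torsion-free sheaves on $\P^2$. By the discussion in \cref{sec:VaVaViewpoint}, a closed point of $\mathfrak{M}_\theta(r,\mathbf v)$ is represented by a tuple $x=(V,W,i,j,B)$ in which $W=\C^r$ carries the trivial $\Gamma$-action, $V=\bigoplus_i \C^{\mathbf v_i}\otimes\rho_i$, and $i\colon W\to \wedge^2 L\otimes V$, $j\colon V\to W$, $B\colon V\to L\otimes V$ are $\Gamma$-equivariant, subject to $\mu'(x)=0$ and, since $\theta\in C^+$, to the stability condition of \cref{rmk: stabilityForGreps}: there is no proper $\Gamma$-stable $U\subsetneq V$ with $i(W)\subset U$ and $B(U)\subset L\otimes U$. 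The whole tuple is taken modulo $G_{\mathbf v}$. The task is therefore to produce a canonical, $\Gamma$-equivariant bijection between such data and the isomorphism classes comprising $X_{r,\mathbf v}$ of \cref{def:Xrv}.

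First I would build the map $X_{r,\mathbf v}\to\mathfrak M_\theta(r,\mathbf v)(\C)$ cohomologically. Given $(\shf E,\phi_{\shf E})$, set $W=\C^r$ with the (trivial) $\Gamma$-action read off from the framing, and $V=H^1(\P^2,\shf E\otimes\shf I_{l_\infty})$, which by \cref{def:Xrv} is a $\Gamma$-representation with multiplicities exactly $\mathbf v$. The maps $B,i,j$ then arise as the multiplication/cup-product maps on the Beilinson cohomology of $\shf E$: its spectral sequence, whose $E_1$-page involves the groups $H^q(\P^2,\shf E\otimes\Omega^p(p))$, collapses once torsion-freeness of $\shf E$ and its triviality along $l_\infty$ force all but the monad terms to vanish, and the surviving differentials supply $B,i,j$. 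The relation $\mu'(x)=0$ follows because these maps are consecutive differentials in a complex, and stability of $x$ follows from torsion-freeness together with the framing.

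Conversely, from a stable tuple I would reconstruct the sheaf via the $\Gamma$-equivariant monad
\[
V\otimes\OO_{\P^2}(-1)\xrightarrow{\ \sigma\ }\bigl(L\otimes V\oplus W\bigr)\otimes\OO_{\P^2}\xrightarrow{\ \tau\ }\wedge^2 L\otimes V\otimes\OO_{\P^2}(1),
\]
with $\sigma,\tau$ assembled from $B,i,j$ and the tautological section of $\OO_{\P^2}(1)$ so that $\tau\circ\sigma$ equals $\mu'(x)$ times the square of that section, hence vanishes. Stability makes $\sigma$ a fibrewise-injective bundle map and $\tau$ fibrewise surjective, so $\shf E\coloneqq\ker\tau/\im\sigma$ is a $\Gamma$-equivariant torsion-free sheaf of rank $r$; restricting the monad to $l_\infty$ yields the trivialization $\phi_{\shf E}$, and a direct cohomology computation recovers $H^1(\P^2,\shf E\otimes\shf I_{l_\infty})\cong V$ as a $\Gamma$-module, so that $\shf E\in X_{r,\mathbf v}$. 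I would then verify that the two assignments are mutually inverse up to the $G_{\mathbf v}$-action and natural in $\Gamma$-equivariant isomorphisms, so that they descend to a bijection of isomorphism classes.

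The hard part will be twofold. The first difficulty is establishing the vanishing theorems that collapse the $\Gamma$-equivariant Beilinson spectral sequence to the displayed monad for a merely \emph{torsion-free} (not locally free) $\shf E$; here \cref{lem:locFreeAroundFraming}, which localizes freeness to a neighbourhood of $l_\infty$, is exactly what rescues the framing-trivialization step. The second and more essential difficulty is matching the GIT notion of $\theta$-stability for $\theta\in C^+$ precisely with the geometric conditions that $\shf E$ be torsion-free and trivial along $l_\infty$ — that is, checking that no additional costability is required and that the chamber $C^+$ is exactly the one selecting torsion-free monad cohomology. This identification of the stability regime is where I expect the main subtlety to lie.
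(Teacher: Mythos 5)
Your proposal follows essentially the same route as the paper, which itself only sketches this argument and defers to Varagnolo--Vasserot: the forward direction via the $\Gamma$-equivariant monad $C^\bullet_N$ built from $(V,W,B,i,j)$, and the inverse via the Beilinson spectral sequence of $\shf E(-l_\infty)$, with the $E_1$-page collapsing to a monad of the same shape. One correction: $\theta$-stability (the condition of \cref{rmk: stabilityForGreps}) gives fibrewise surjectivity of $\tau$, but only \emph{sheaf-level} injectivity of $\sigma$; if $\sigma$ were fibrewise injective everywhere, as you assert, the middle cohomology $\ker\tau/\im\sigma$ would be locally free, whereas $X_{r,\mathbf v}$ must contain genuinely non-locally-free torsion-free sheaves (full fibrewise injectivity of $\sigma$ corresponds to the additional costability condition and cuts out exactly the locally free locus, cf. \cref{lem:NakHigherRankDesc}). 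The correct statement is that $\sigma$ is fibrewise injective along $l_\infty$ and away from finitely many points of $\C^2$, which still yields torsion-freeness of the cohomology; with that adjustment your outline matches the standard proof.
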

\begin{proof}[Proof sketch of \cref{thm:VaVa}]
	We sketch the map, and state its properties without proof -- see \cite[Theorem 1]{VaVa} and its proof for the full details.
	We will need to use the viewpoint of the quiver variety $ \mathfrak{M}_{\theta}(r, \mathbf{v})$ given in \cref{sec:VaVaViewpoint}\ie we consider the underlying quiver as having only one pair of arrows between the $ 0 $-vertex and the framing vertex $ \infty $, but our quiver representations have an $ r $-dimensional vector space at $ \infty $.
	
	So let $ N = (V_N, W_N, B_N, i_N, j_N)\in \mu^{-1}(0)\subset M(r, \mathbf v)$. We construct the $ \Gamma $-equivariant complex of sheaves
	\begin{equation}
		\label{diag:VaragnoloVasserotConstruction}
		C_N^\bullet ={\left[\begin{tikzcd} \OO_{\P^2}(-l_\infty)\otimes V_N\arrow[r, "a_N"]& \OO_{\P^2}\otimes((V_N\otimes L)\oplus W_N)\arrow[r, "b_N"] &\OO_{\P^2}(l_\infty)\otimes \bigwedge^2 L\otimes V_N \end{tikzcd}\right]}
	\end{equation}
	with 
	\[ a_N = \begin{pmatrix}
		zB-x\cdot e_x-y\cdot e_y\\
		zj
	\end{pmatrix},\quad b_N = \begin{pmatrix}
		zB-x\cdot e_x-y\cdot e_y& zi
	\end{pmatrix}.\] Here $ e_x, e_y $ are the canonical basis vectors of $ L $. We set the terms of $ C_N^\bullet $ to be of degree $ 0,1, $ and $ 2 $.
	
	Then $ C_N^\bullet $ is a complex of $ \Gamma $-equivariant $ \OO_{\P^2} $-modules. 
	We will denote the cohomology objects of the complex $ C_N^\bullet $ by $ \mathcal{H}^i(C_N^\bullet) $.
	Note that the assignment $ N\mapsto C_N^\bullet $ is functorial, i.e. a map $ N\to N' $ of $ \Pi $-modules  (considered as elements of $ M(r, \mathbf{v}) $ from \cref{sec:VaVaViewpoint}) induces a map of complexes $ C_N^\bullet\to C_{N'}^\bullet $. As $ \OO_{\P^2}(i) $ is locally free, this functor is even exact.
	
	For a \emph{stable} $ N $ representing a $ \C $-point of $ \mathfrak{M}_{\theta}(r, \mathbf{v}) $, the complex $ C_N^\bullet $ is even a \emph{monad}, that is, the middle $ \mathcal H^1(C_N^\bullet) $ is the only nonzero cohomology object. This is naturally a $ \Gamma $-equivariant $ \OO_{\P^2} $-module, equipped with a framing, and the map $ N\mapsto \mathcal H^1(C_N^\bullet) $ provides the bijection $ \mathfrak{M}_{\theta}(r, \mathbf{v})(\C)\isoto X_{r, \mathbf{v}} $ (\cite[Theorem 1]{VaVa}).

	We have now made one direction of the bijection from \cref{thm:VaVa} explicit, but we can do so for the inverse direction as well:
	Let $ \shf E\in X_{r, \mathbf{v}} $, set $ V_{\shf E}= H^1(\P^2, \shf E\otimes \shf I_{l_\infty})= H^1(\P^2, \shf E(-l_\infty)) $, and let $ W_{\shf E} $ be an $ r $-dimensional vector space. 
	Futhermore, let $ \shf Q $ be the $ \Gamma $-equivariant locally free sheaf of rank 2 on $ \P^2 $ which is the cokernel in the Euler exact sequence
	\[ 0\to\OO_{\P^2}(-l_\infty) \to \OO_{\P^2}^{\oplus3}\to \shf Q\to 0.\] 
	
	 Associated to any framed $ \Gamma $-equivariant torsion-free sheaf $ \shf E $ on  $ \P^2 $, there is a \emph{Beilinson spectral sequence} (see  \cite[Section 2.1]{Nakajimabook} or \cite{VaVa}) converging to $ \shf E $ determining the quiver data. 
	The $ E_1 $-term of this Beilinson spectral sequence associated to $ \shf E $ is, (after tensoring with $ \OO_{\P^2}(l_\infty) $) the complex 
	\begin{multline*} \OO_{\P^2}(-l_\infty)\otimes H^1(\P^2, \shf E(-l_\infty)){\to} \OO_{\P^2}\otimes H^1(\P^2, \shf E(-l_\infty)\otimes \shf Q\dual)\\ {\to} \OO_{\P^2}(l_\infty)\otimes H^1(\P^2, \shf E(-l_\infty)) 
	\end{multline*} which is isomorphic to a monad 
	\begin{equation*} C_{M_\shf E}^\bullet \coloneqq \left[\begin{tikzcd} \OO_{\P^2}(-l_\infty)\otimes V_{\shf E}\arrow["a_{\shf E}",r]&\OO_{\P^2}\otimes(V_{\shf E}\otimes L\oplus W_{\shf E})\arrow["b_{\shf E}",r]& \OO_{\P^2}(l_\infty)\otimes \Lambda^2 L\otimes V_{\shf E} \end{tikzcd}\right]
	\end{equation*} of the type from \cref{diag:VaragnoloVasserotConstruction}, and so we can read off linear $ \Gamma $-equivariant maps $ i_{M_\shf E}, j_{M_\shf E}, B_{M_\shf E} $.

This makes $ (V_{\shf E}, W_{\shf E}, i_{M_\shf E}, j_{M_\shf E}, B_{M_\shf E}) $ a stable $ \Pi $-module (see \cite[Section 2.1]{Nakajimabook}), and so it determines a (closed) point in $ \mathfrak{M}_\theta(r, \mathbf v) $.
	
\end{proof}

\subsubsection{Notation}
Let us fix some notation: If $ \shf E\in X_{r,\mathbf v} $, we write $ (W_{\shf E}, V_{\shf E},B_{\shf E}, i_{\shf E}, j_{\shf E}) $ for its associated stable $ (Q',\mathcal J) $-representation (up to isomorphism), and $ M_{\shf E} $ for the resulting $ \Pi $-module.
We denote the $ \Gamma $-equivariant $ \OO_{\P^2} $-module that \cref{thm:VaVa} associates to a stable $ \Pi $-module $ M $ by $ \sh M $.

Furthermore, we will write $ \mathcal H^0(C^\bullet_M), \mathcal H^1(C^\bullet_M),\mathcal H^2(C^\bullet_M)  $ for the cohomology objects of the complex $ C^\bullet_M $.

\subsection{Constructing a map $ \mathfrak{M}_{\theta_0}(r, n\delta)(\C)\to Y_{r,n} $}

We now construct a map of sets $ \mathfrak{M}_{\theta_0}(r, n\delta)(\C)\to Y_{r,n} $. 
Let $ x\in \mathfrak{M}_{\theta_0}(r, n\delta)(\C) $. By \cref{lem:VGITsurjectivity}, there is a $ \theta $-stable $ \Pi$-module $ M $ representing a closed point of $ \mathfrak{M}_\theta(r,n\delta)$ that is a lift of $ x $, and this lift corresponds by \cref{thm:VaVa} to a framed, torsion-free rank $ r $- $ \Gamma $-equivariant $ \OO_{\P^2} $- module $ \sh M $.
\begin{proposition}\label{prop:liftingisindep}
	The framed $ \OO_{\mathcal{X}} $-module $ D(\sh M) $ is independent of the choice of $ M $, and is thus only dependent on $ x $.
	In fact, the sheaf $ D(\sh M) $ is only dependent on the $ R_{\theta_0} $-equivalence class of $ x $.
\end{proposition}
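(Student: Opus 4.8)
The plan is to establish the stronger second assertion, from which the first follows formally. Concretely, I will show that $D(\sh M)$ is isomorphic, as a framed sheaf, to $D(\sh{M\con})$, where $M\con$ is the concentrated module attached to the $R_{\theta_0}$-equivalence class of $M$ (\cref{def:concentratedModule}). Any two $\theta$-stable lifts of a given $x\in\mathfrak{M}_{\theta_0}(r,n\delta)(\C)$ have the same $S_{\theta_0}$-equivalence class, hence the same $\theta_0$-polystable representative and therefore the same concentrated module; this gives independence of the chosen lift. Since $M\con$ is by construction an invariant of the $R_{\theta_0}$-equivalence class, the final claim follows at once.

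First I would invoke \cref{lem: concentrated is a quotient}, which exhibits $M\con$ as a quotient of $M$. Passing to the representation-theoretic description of \cref{sec:VaVaViewpoint}, this produces a short exact sequence of $(Q',\mathcal J)$-representations $0\to K\to M\to M\con\to 0$ that is the identity on the $r$-dimensional framing space $W$. Hence $K$ is unframed ($W_K=0$, so $i_K=j_K=0$), and its underlying space $V_K$ is a finite-dimensional $\Gamma$-equivariant $\C[x,y]$-module via $B_K$. By \cref{cor:dimConcentratedModule} one has $\dim_\infty K=\dim_0 K=0$, so the trivial representation $\rho_0$ does not occur in $V_K$. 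A $\Gamma$-equivariance argument then forces $V_K$ to be supported only at the origin $o$: a nonzero joint eigenvalue of the commuting operators making up $B_K$ would give a free $\Gamma$-orbit in its support, and the sections along such an orbit form an induced representation, in which $\rho_0$ necessarily appears --- contradicting $\dim_0 K=0$.

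Next I would apply the exact, functorial monad construction $C^\bullet_{(-)}$ of \cref{thm:VaVa} to obtain a short exact sequence of complexes $0\to C^\bullet_K\to C^\bullet_M\to C^\bullet_{M\con}\to 0$ and take its long exact cohomology sequence. As $M$ and $M\con$ are stable, $C^\bullet_M$ and $C^\bullet_{M\con}$ are monads, so $\mathcal H^1(C^\bullet_M)=\sh M$, $\mathcal H^1(C^\bullet_{M\con})=\sh{M\con}$, and the other cohomology objects vanish. The crux is the local computation of $\mathcal H^{\bullet}(C^\bullet_K)$. Writing $B_K=B_x e_x+B_y e_y$, on $U_o=\C^2$ the complex $C^\bullet_K$ is exactly the Koszul complex of the commuting endomorphisms $(B_x-x,\,B_y-y)$ of $\OO_{\C^2}^{\oplus \dim V_K}$; these form a regular sequence (their determinants reduce to $(-x)^{\dim V_K}$ and $(-y)^{\dim V_K}$, since $B_x,B_y$ are nilpotent by the previous step), so $\mathcal H^0(C^\bullet_K)=\mathcal H^1(C^\bullet_K)=0$ and $\mathcal H^2(C^\bullet_K)=\underline{V_K}$, the skyscraper at $o$. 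On $U_l=\P^2\setminus\{o\}$ the complex is fibrewise exact --- a Koszul complex for the nonzero linear form $x e_x+y e_y$ along $l_\infty$, and exact off $l_\infty$ because the support of $V_K$ is $\{o\}$. Thus $\mathcal H^1(C^\bullet_K)=0$, while $\mathcal H^2(C^\bullet_K)$ is the $\Gamma$-equivariant skyscraper at $o$ with fibre $V_K$, and the long exact sequence collapses to
\[ 0\to\sh M\to\sh{M\con}\to\mathcal H^2(C^\bullet_K)\to 0, \]
which is an isomorphism over $l_\infty$, so the framings correspond.

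Finally I would apply the exact functor $D$. The sheaf $\mathcal H^2(C^\bullet_K)$ vanishes on $U_l$, so $D(\mathcal H^2(C^\bullet_K))|_{V_d}=0$; and on $V_p=\C^2/\Gamma$ one has $D(\mathcal H^2(C^\bullet_K))|_{V_p}=(\pi|_{U_o})_*(\mathcal H^2(C^\bullet_K))^\Gamma$, whose fibre is $V_K^\Gamma$, i.e.\ the multiplicity of $\rho_0$ in $V_K$, which is $\dim_0 K=0$. Hence $D(\mathcal H^2(C^\bullet_K))=0$, and applying $D$ to the displayed sequence gives $D(\sh M)\isoto D(\sh{M\con})$ as framed sheaves. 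I expect the main difficulty to be the local monad computation of the third paragraph --- verifying the regular-sequence property and the exactness over $l_\infty$ --- together with the care needed to transport the quotient $M\twoheadrightarrow M\con$ from the $\Pi$-module ($Q$) picture into the $Q'$-picture used by the monad construction.
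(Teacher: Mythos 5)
Your proof is correct and follows the same skeleton as the paper's: reduce to the concentrated module via \cref{lem: concentrated is a quotient}, form the short exact sequence of monad complexes $0\to C^\bullet_K\to C^\bullet_M\to C^\bullet_{M\con}\to 0$, kill $\mathcal{H}^1(C^\bullet_K)$ by the Koszul argument, and then show $D(\mathcal{H}^2(C^\bullet_K))=0$. The only genuine divergence is in this last step. The paper filters $K$ by vertex simple modules (\cref{lem:filtration for the kernel}) and, for each vertex simple at a vertex $i\ne 0$, checks by an explicit computation with sections that $b_K$ is surjective on $U_l$ and that the cokernel has no $\Gamma$-invariant sections over $U_o$ (\cref{lem:simplemoduleshavenoinvarianthom}). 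You instead identify $\mathcal{H}^2(C^\bullet_K)$ globally as the $\Gamma$-equivariant skyscraper at $o$ with fibre $V_K$ --- using that $B_K$ is nilpotent, which you correctly deduce from $\dim_0 K=0$ via the induced-representation argument on free orbits --- and then observe that $V_K^\Gamma=0$ for the same reason. Both routes ultimately rest on the same representation-theoretic fact, namely that $\rho_0$ does not occur in $V_K$; yours avoids the filtration, and the resulting statement that $\sh M\to \sh M\con$ is an isomorphism away from $o$ also makes the compatibility of framings (left implicit in the paper) explicit.
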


We shall need several intermediate lemmas to prove this. The strategy will be to show that the concentrated representation of an $ R_{\theta_0} $-equivalence class gives rise to the same $ \OO_{\mathcal X} $-module as every element of the class. 

\begin{lemma}\label{lem:simplemoduleshavenoinvarianthom}
	
	Suppose that $ M $ is a vertex simple $ \Pi $-module, for some vertex $ i\ne 0 $, corresponding to the simple $ \Gamma $-representation $ \rho_i $. Then we have 
	\[ D(\mathcal H^2(C_M^\bullet))=0.\]
\end{lemma}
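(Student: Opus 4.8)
The plan is to make the complex $C_M^\bullet$ completely explicit for a vertex simple $M$, read off its second cohomology as a $\Gamma$-equivariant skyscraper sheaf, and then observe that applying $D$ annihilates it. First I would record the quiver data of $M$. Since $M$ is a vertex simple module at $i\neq 0$, its dimension vector is the standard basis vector $e_i$, so in the $(Q',\mathcal J)$-picture of \cref{sec:VaVaViewpoint} we have $W_M=0$ and $V_M=\rho_i$, which forces $i_M=j_M=0$. Moreover $B_M\colon \rho_i\to L\otimes\rho_i$ is a $\Gamma$-equivariant map, and since the McKay graph is an $ADE$ Dynkin diagram it has no loops, so $\Hom_\Gamma(\rho_i, L\otimes\rho_i)=0$ and hence $B_M=0$ as well.

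With every quiver map vanishing, the complex $C_M^\bullet$ of \eqref{diag:VaragnoloVasserotConstruction} degenerates to $\rho_i$ tensored with the Koszul-type complex
\[ \OO_{\P^2}(-l_\infty)\otimes\rho_i \xrightarrow{\,x\,e_x + y\,e_y\,} \OO_{\P^2}\otimes L\otimes\rho_i \xrightarrow{\,(x\,e_x+y\,e_y)\wedge(-)\,} \OO_{\P^2}(l_\infty)\otimes\bigwedge\nolimits^2 L\otimes\rho_i, \]
in which only the homogeneous coordinates $x,y$ appear, the $z$-terms having carried the now-vanishing maps $B_M,i_M,j_M$. Next I would compute $\mathcal H^2(C_M^\bullet)=\coker(b_M)$. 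Trivializing $\OO_{\P^2}(l_\infty)$ by the $\Gamma$-invariant section $z$, the image of $b_M$ is the subsheaf generated by $x$ and $y$, so the cokernel is supported precisely on $\{x=y=0\}=\{o\}$. Hence $\mathcal H^2(C_M^\bullet)$ is a skyscraper sheaf at $o$, with fiber $\mathcal H^2(C_M^\bullet)|_o\cong \OO_{\P^2}(l_\infty)|_o\otimes\bigwedge^2 L\otimes\rho_i$ as a $\Gamma$-representation.

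The whole point of the explicit description is the $\Gamma$-equivariant structure of this fiber. Because $z$ is $\Gamma$-invariant and nonzero at $o$, the trivialization identifies $\OO_{\P^2}(l_\infty)|_o$ with the trivial representation, and $\bigwedge^2 L$ is trivial since $\Gamma\subset\SL_2(\C)$; therefore $\mathcal H^2(C_M^\bullet)|_o\cong\rho_i$ as $\Gamma$-representations. Finally I would apply $D$ via its two-chart description from \cref{def:functorD}: since the support $\{o\}$ lies outside $U_l$, the descent gives $D(\mathcal H^2(C_M^\bullet))|_{V_d}=0$; and on $V_p$ we have $D(\mathcal H^2(C_M^\bullet))|_{V_p}=(\pi|_{U_o})_*\big(\mathcal H^2(C_M^\bullet)|_{U_o}\big)^\Gamma$, which is the skyscraper with fiber $(\rho_i)^\Gamma$. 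As $\rho_i$ is a nontrivial irreducible for $i\neq 0$, we have $(\rho_i)^\Gamma=0$, so this summand vanishes as well, and thus $D(\mathcal H^2(C_M^\bullet))=0$.

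The main obstacle is the third step: correctly pinning down the $\Gamma$-action on the skyscraper fiber, since the vanishing of $(\rho_i)^\Gamma$ is exactly where the hypothesis $i\neq 0$ enters (for $i=0$ the fiber would be trivial and its invariants nonzero). I would therefore take care to justify that the $z$-trivialization is genuinely $\Gamma$-equivariant, and that taking $\Gamma$-invariants of a sheaf supported at the fixed point $o$ reduces to taking invariants of its fiber.
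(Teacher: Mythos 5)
Your proposal is correct and follows essentially the same route as the paper's proof: both reduce to $B_M=i_M=j_M=0$, identify the image of $b_M$ with the (twisted) ideal $(x,y)$ so that $\mathcal H^2(C_M^\bullet)=\coker b_M$ dies on $U_l$ and is concentrated at $o$, and then kill the remaining piece using that $\rho_i$ is a nontrivial irreducible, hence has no $\Gamma$-invariants. The only difference is presentational: you package the cokernel once and for all as the $\Gamma$-equivariant skyscraper $\C_o\otimes\bigwedge^2L\otimes\rho_i\cong\C_o\otimes\rho_i$ and take invariants of its fibre, whereas the paper argues at the level of invariant sections over $U_o$ (via isotypical decomposition) and separately checks surjectivity of $b_M$ on $D_+(x)$ and $D_+(y)$.
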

\begin{proof}
	We will show that for the étale cover $ \{V_p, V_d\} $ of $ \mathcal{X} $, we have that $ D(\mathcal H^2(C_M^\bullet))|_{V_i} =0 $ for $ i\in \{p,d\} $.
	
	First, for $ V_p $ it is enough to show that $ \pi_*((\mathcal H^2(C_M^\bullet))|_{U_o})^\Gamma =0 $. As $V_p= \C^2/\Gamma $ is an affine scheme, it suffices to show that this sheaf has no nonzero sections defined on all $ V_p $.
	This holds if we can show that 	
	\[ ((\mathcal H^2(C_M^\bullet)(U_o))^\Gamma = 0 .\]
	
	To do this, note that the end of the complex $ C_M^\bullet $ becomes
	\begin{equation*}		
		\begin{tikzcd}
			\OO_{\P^2}\otimes L\otimes \rho_i\arrow[r, "b_M"]& \OO_{\P^2}(l_\infty)\otimes \bigwedge^2L\otimes \rho_i\arrow[r]&0
		\end{tikzcd}  
	\end{equation*}
	where, since $ B_M=0 $, we have $ b_M= (-xe_x-ye_y) $.
	Now $ \bigwedge^2L $ is the trivial representation, and $ \rho_i $ is a nontrivial representation.
	Over any $ \Gamma $-invariant open set $ U\subset \P^2 $ -- for instance $ U_o $ -- a $ \Gamma $-invariant section of $(\OO_{\P^2}(l_\infty)\otimes \bigwedge^2L\otimes \rho_i) $ is of the form 
	\[\begin{split}
		\sum_{j=1}^k((xf_j+yg_j+zh_j)&\otimes (e_x\wedge e_y)\otimes v_j)\\
		=\sum_{j=1}^k(\alpha_j& \otimes (e_x\wedge e_y)\otimes v_j),\\
	\end{split}\] where $ f_j,g_j,h_j $ are sections of $ \OO_{\P^2}(U) $, and $ v_j\in \rho_i $. 
	
	Decompose $ \alpha_j $ by representation type --- i.e. write $ \OO_{\P^2}(U)= \bigoplus_{\rho_i \in R} V_i $, where $ V_i $ is the $ \rho_i $-isotypical component, and set $ \alpha_j = \sum \alpha_{j,i} $ with $ \alpha_{j,i}\in V_i $. Then we see that $ \alpha_{j,0}=0 $, as otherwise $ \sum_{j=1}^k\alpha_j\otimes v_j $ cannot be $ \Gamma $-invariant.
	
	This shows that $ h_j = xh_{j1}+ yh_{j2} $ for some $ h_{j1}, h_{j2} $. It follows that we have \[ (xf_j+yg_j+zh_j)\otimes (e_x\wedge e_y)\otimes v_j = (x(f_j+zh_{j1})+y(g_j+zh_{j2}))\otimes (e_x\wedge e_y)\otimes v_j  .\] But this is the image of \[ \big(-(g_j+zh_{j2})\otimes e_x+(f_j+zh_{j1})\otimes e_y\big)\otimes v_j \] under $ b_M $. Hence $ \coker b_M $ has no $ \Gamma$-invariant sections.

	Second, restrict to $ V_d $: We shall show that $ D(\mathcal H^2(C_M^\bullet))|_{V_d} $ descends from the zero sheaf, and thus is itself zero. So we shall prove that $ \mathcal H^2(C_M^\bullet)|_{U_l} =0$.
	Consider the restriction of $\mathcal  H^2(C_M^\bullet) $ to $ U_l = \P^2/\{o\} $. This scheme is the union of two affine subschemes, namely $ D_+(x) = \P^2\setminus V(x) $ and $ D_+(y)= \P^2\setminus V(y) $.
	We show that $ \mathcal H^2(C_M^\bullet)|_{D_+(x)} =0$, the argument for $ D_+(y) $ is completely analogous. 
	
	On the distinguished open set $ D_+(x) $, any section of $ \OO_{D_+(x)}(l_\infty) $ is a linear combination of elements on the form \[a= f\otimes (e_x\wedge e_y\otimes v) ,\] with $ f\in x\cdot \C[y/x, z/x] $, $ v\in \rho_i $. We can thus write $ f =xg $, with $ g \in \C[y/x, z/x] $.
	Then $ a $ is the image of $ g\otimes e_y\otimes v $ under $ b_M $. It follows that $ b_M|_{D_+(x)} $ is surjective, and so $ H^0({D_+(x)}, \mathcal H^2(C_M^\bullet))=0 $. But as $ \mathcal H^2(C_M^\bullet) $ is a coherent sheaf, and $ D_+(x) $ is affine, this shows that $ \mathcal H^2(C_M^\bullet)|_{D_+(x)} =0 $. 
	
	This concludes the proof.
	
\end{proof}

\begin{lemma}\label{lem:filtration for the kernel}
	Let $ K = (W_K, V_K, B_K, i_K, j_K) $ be a $ (Q', \mathcal J) $-representation (\cref{sec:VaVaViewpoint})such that $ W_K = 0, V_{K,0} =0 $.
	
	Then $ D(\mathcal H^2(C_K^\bullet)) =0$.
\end{lemma}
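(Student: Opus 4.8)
The plan is to reduce to the vertex-simple case already handled in \cref{lem:simplemoduleshavenoinvarianthom} by a dévissage argument. First I would observe that, because $ W_K = 0 $ and $ V_{K,0} = 0 $, the representation $ K $ is supported entirely on the vertices $ Q_{\Gamma, 0}\setminus\{0\} $. Moreover, for any vertex $ i\ne 0 $ adjacent to $ 0 $, the corresponding summand $ a\ol a $ of the preprojective relation at $ i $ factors through $ V_{K,0}=0 $ and hence acts by zero; so the relations $ \mathcal J $ restrict to the preprojective relations of the subquiver of $ Q_\Gamma $ obtained by deleting the vertex $ 0 $. By the McKay correspondence this subquiver is a finite-type $ ADE $ Dynkin quiver (as already noted in \cref{rem:vertex_simples}), and $ K $ is a finite-dimensional module over its preprojective algebra, which I will call $ \Pi_{ADE} $. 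Since $ \Pi_{ADE} $ is finite-dimensional, $ K $ admits a composition series
\[ 0 = K^0\subset K^1\subset \dots \subset K^m = K, \]
whose factors $ K^t/K^{t-1} $ are simple $ \Pi_{ADE} $-modules; by \cite[Lemma~2.2]{SavTing} each such factor is a one-dimensional vertex simple supported at some vertex $ i_t\ne 0 $. Regarding each term as a $ (Q',\mathcal J) $-representation with $ W=0 $, extended by zero at $ 0 $ and $ \infty $, this gives a filtration of $ K $ by vertex simples of exactly the kind treated in \cref{lem:simplemoduleshavenoinvarianthom}.

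Next I would run the induction on $ m $. The assignment $ M\mapsto C_M^\bullet $ is exact, since each term of $ C_M^\bullet $ is $ M $ tensored with a locally free sheaf; hence the short exact sequences extracted from the filtration yield short exact sequences of complexes of $ \Gamma $-equivariant sheaves, and applying the exact functor $ D $ termwise keeps them short exact. As $ D $ is exact it commutes with taking cohomology, so $ D(\mathcal H^2(C_M^\bullet)) = \mathcal H^2(D(C_M^\bullet)) $ for every term $ M $. For a short exact sequence $ 0\to K'\to K\to K''\to 0 $ the associated long exact cohomology sequence therefore ends in
\[ D(\mathcal H^2(C_{K'}^\bullet))\to D(\mathcal H^2(C_{K}^\bullet))\to D(\mathcal H^2(C_{K''}^\bullet))\to 0. \]
Taking $ K'' = K^t/K^{t-1} $ a vertex simple and $ K' = K^{t-1} $, the right-hand term vanishes by \cref{lem:simplemoduleshavenoinvarianthom} and the left-hand term vanishes by the inductive hypothesis, so exactness forces $ D(\mathcal H^2(C_K^\bullet)) = 0 $. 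The base case $ m=1 $ is precisely \cref{lem:simplemoduleshavenoinvarianthom}, so the induction completes the argument.

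The only genuine content, and hence the step I expect to be the main obstacle, is the first paragraph: verifying that the vanishing of $ V_{K,0} $ really does collapse all preprojective relations incident to $ 0 $, so that $ K $ becomes a module over the finite-type preprojective algebra and thereby acquires a vertex-simple filtration. Everything after that is a formal dévissage resting on the two exactness statements (for $ M\mapsto C_M^\bullet $ and for $ D $) together with the previously established vanishing in the vertex-simple case.
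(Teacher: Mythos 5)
Your proof is correct and follows essentially the same route as the paper: a filtration of $K$ by vertex simples (which the paper extracts directly from \cref{rem:vertex_simples}), exactness of $N\mapsto C_N^\bullet$ and of $D$, the vanishing for vertex simples from \cref{lem:simplemoduleshavenoinvarianthom}, and induction through the resulting sequences. Your first paragraph merely spells out in more detail the reduction to the finite-type preprojective algebra that the paper delegates to \cref{rem:vertex_simples}.
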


\begin{proof}
	
	First, note that in this case the map 
	\begin{equation}\label{eq:kernelfiltration}
		b_K  \colon \OO_{\P^2}\otimes L\otimes V\to \OO_{\P^2}\otimes \bigwedge^2L\otimes V
	\end{equation} is given by $ b_k = (B_K-xe_x-ye_y) $.
	There is by \cref{rem:vertex_simples} a filtration 
	\[ 0 = K_0\subset K_1\subset \dots \subset K_n = K, \] where $K_i' = K_i/K_{i-1} $ is a vertex simple (i.e 1-dimensional) $ \Pi $- module.  Thus $ V_{K_i'} = K_i'\otimes \rho_i $ is a simple nontrivial $ \Gamma $-representation, and it follows from \cref{lem:simplemoduleshavenoinvarianthom} that $D(\mathcal H^2(C_{K_i'}^\bullet)) = 0$.
	
	Then the short exact sequences of $ \Pi $-modules
	\[ 0\to K_{i-1}\to K_i\to K_i'\to 0 \] give, through \cref{diag:VaragnoloVasserotConstruction} 
	and applying the exact functor $ D $, sequences 
	\[ D(\mathcal H^2(C_{K_{i-1}}^\bullet))\to D(\mathcal H^2(C_{K_{i}}^\bullet))\to D(\mathcal H^2(C_{K_{i}'}^\bullet)) \] exact in the middle. Induction through these establishes the lemma.
\end{proof}

\begin{proof}[Proof of \cref{prop:liftingisindep}.]
	Let $ M\con $ be the concentrated representation of the $ S_{\theta_0} $-equivalence class of $ x $. Thus $ M\con $ is a stable $ \Pi $-module, and $ \dim M\con\le \dim M = (r, n\delta) $.
	
	Then \cref{lem: concentrated is a quotient} shows that $ M\con $ is a quotient of $ M $, so there is an exact sequence of $ \Pi $-modules 
	\begin{equation}\label{diag:exactseqofPimodules}
		0\to K\to M\to M\con\to 0 \end{equation} 
	for some $ \Pi $-module $ K $, which must satisfy $ \dim_0 K = \dim_\infty K = 0 $.
	
	As $ M $ and $ M\con $ are 
	stable $ \Pi $-modules, we can apply \cref{thm:VaVa} to associate them to $ \Gamma $-equivariant torsion-free $ \OO_{\P^2} $-modules $ \sh M, \sh M_{con} $ respectively.
	
	Let us use the construction \eqref{diag:VaragnoloVasserotConstruction} on the sequence (\ref{diag:exactseqofPimodules}). We get an exact sequence of complexes $ 0\to C_K^\bullet\to C_M^\bullet\to C_{M\con}^\bullet\to 0 $, or if written out:
	\begin{equation}\label{diag:exSeqOfComplexes}
		\begin{tikzcd}
			0\arrow[d]&0\arrow[d]&0\arrow[d]\\
			\OO_{\P^2}(-l_\infty)\otimes V_K\ar[r,"a_K"]\arrow[d]& \OO_{\P^2}\otimes((V_K\otimes L)\oplus W_K)\arrow[r, "b_K"]\arrow[d] &\OO_{\P^2}(l_\infty)\otimes \wedge^2 L\otimes V_K\arrow[d] \\
			\OO_{\P^2}(-l_\infty)\otimes V_M\arrow[r,"a_M"]\arrow[d]& \OO_{\P^2}\otimes((V_M\otimes L)\oplus W_M)\arrow[r, "b_M"]\arrow[d] &\OO_{\P^2}(l_\infty)\otimes \wedge^2 L\otimes V_M \arrow[d]\\
			\OO_{\P^2}(-l_\infty)\otimes V_{M\con}\arrow[r,"a_{M\con}"]\arrow[d]& \OO_{\P^2}\otimes((V_{M\con}\otimes L)\oplus W_{M\con})\arrow[r, "b_{M\con}"] \arrow[d]&\OO_{\P^2}(l_\infty)\otimes \wedge^2 L\otimes V_{M\con} \arrow[d]\\
			0&0&0\rlap{\quad\quad.}
		\end{tikzcd}
	\end{equation}
	Since $ M $ and $ M\con $ are both stable $ \Pi $-modules, \[ \mathcal{H}^0(C_M^\bullet) = \mathcal{H}^0(C_{M\con}^\bullet) =  \mathcal{H}^2(C_M^\bullet) = \mathcal{H}^2(C_{M\con}^\bullet) = 0.\]
	It follows that there is an exact sequence of $ \Gamma $-equivariant $ \OO_{\P^2} $-modules
	\[0\to \mathcal H^1(C_K^\bullet) \to \sh M\to \sh M\con\to \mathcal H^2(C_K^\bullet)\to 0 \]
	where $ \mathcal H^1(C_K^\bullet) =\ker b_K/\im a_K $, $ \mathcal H^2(C_K^\bullet) = (\OO_{\P^2}(l_\infty)\otimes \wedge^2 L\otimes V_K)/\im b_K$.
	
	We now claim that 
	\begin{enumerate}\label{enum:two claims}
		\item $ \mathcal H^1(C_K^\bullet) =0 $, and
		\item $ D(\mathcal H^2(C_K^\bullet)) =0 $.
	\end{enumerate}
	Note that $ \dim_\infty K =0 $, so $ W_K = 0 $, and therefore $ C_K^\bullet $ is the complex
	\begin{equation}\label{eq:good_complex}
		\begin{tikzcd} 
			\OO_{\P^2}(-l_\infty)\otimes V_K\arrow[r, "a_K"]& \OO_{\P^2}\otimes(V_K\otimes L)\arrow[r, "b_K"] &\OO_{\P^2}(l_\infty)\otimes \bigwedge^2 L\otimes V_K 
		\end{tikzcd}
	\end{equation}
	where $ a_K, b_K $ are both given by $ zB-x\cdot e_x-y\cdot e_y $ (interpreted appropriately).
	But this is a Koszul complex \cite[proof of Theorem 1]{VaVa}, so $ H^1(C_K^\bullet) =0 $. This proves claim (1).
	
	Claim (2) follows from \cref{lem:filtration for the kernel}.
	Together, claims (1) and (2) imply that $ D(\sh M) $ and  $ D(\sh M\con )$ are isomorphic.
	Therefore any $ \theta $-stable $ \Pi $-module $ M $ which is $ R_{\theta_0} $-equivalent to $ M_{\con} $ will give rise to the same sheaf on $ \P^2/\Gamma $.
	
\end{proof}

This defines a map of sets $f\colon \mathfrak{M}_{\theta_0}(r,n\delta)(\C)\to Y_{r,n} $: For a point $ x\in \mathfrak{M}_{\theta_0}(r,n\delta)(\C) $, pick any $ M\in \mathfrak{M}_{\theta}(r,n\delta)(\C)$ that maps to $ x $. Then $ f $ maps $ x $ to $ D(\sh M) $.

\subsection{Surjectivity of the map $ X_{r,n\delta}\to Y_{r,n} $}
To begin, we require a small extension of a key proposition from \cite{CGGS}

\begin{lemma}\label{prop:dimensionEstimateStableRep}
	Assume that $ M $ is a $ \theta_{0} $-stable $ \Pi $-module, with $ \dim_\infty M=1 $ (and thus $ \dim_0 M=n $). Then $ \dim M\le (1, n\delta) $.
\end{lemma}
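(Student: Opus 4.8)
The plan is to reinterpret the statement as a bound on the multiplicities of a $\Gamma$-representation and to read those inequalities off from $\theta_0$-stability. Using the dictionary recalled in \cref{sec:VaVaViewpoint}, I would restrict $M$ to the McKay subquiver $Q_\Gamma$ and regard the maps $B$ as multiplication by the two coordinates; this turns the $Q_\Gamma$-part of $M$ into a finite-dimensional $\Gamma$-equivariant $\C[x,y]$-module $\tilde V$ with $\dim\Hom_\Gamma(\rho_i,\tilde V)=\dim_i M$. Since $\rho_0$ is trivial, $\dim\tilde V^\Gamma=\dim_0 M$, and $\theta_0(M)=-n\dim_\infty M+\dim_0 M=0$ forces $\dim_0 M=n$ (as in \cref{cor:dimConcentratedModule}), so $\dim\tilde V^\Gamma=n$. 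Because $\rho_i$ occurs in the regular representation $\C[\Gamma]$ with multiplicity $\dim\rho_i$, the desired inequality $\dim M\le(1,n\delta)$ is precisely the statement that $\tilde V$ embeds, as a $\Gamma$-representation, into $\C[\Gamma]^{\oplus n}$, i.e. $\dim\Hom_\Gamma(\rho_i,\tilde V)\le n\dim\rho_i$ for all $i$.

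Next I would squeeze two consequences out of $\theta_0$-stability. First, the submodule $N\subseteq M$ generated by $M_\infty$ has $\dim_\infty N=1$, so if it were proper, stability would give $0<\theta_0(N)=-n+\dim_0 N$, contradicting $\dim_0 N\le\dim_0 M=n$; hence $N=M$ and $M$ is generated by $M_\infty$. In module terms this says $\tilde V=\C[x,y]\cdot\tilde V^\Gamma$, equivalently (Nakayama) that the top $\tilde V/\mathfrak m\tilde V$ is a sum of copies of $\rho_0$. Second, any nonzero submodule $U$ with $\dim_\infty U=0$ is proper, so stability forces $\theta_0(U)=\dim_0 U>0$; translated back, \emph{every} nonzero $\C[x,y]$-submodule of $\tilde V$ has nonzero $\Gamma$-invariants, i.e. the socle of $\tilde V$ is a sum of copies of $\rho_0$.

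I would then localise on the coarse space $\C^2/\Gamma$. Splitting $\tilde V$ according to its finite support gives a decomposition $\tilde V=\tilde V_o\oplus\tilde V'$ of $\Gamma$-equivariant $\C[x,y]$-modules, where $\tilde V_o$ is supported at the singular point and $\tilde V'$ on the free orbits. On a free orbit $\Gamma$ acts freely, so by \cref{thm:descentWhenScheme} the part $\tilde V'$ is, as a $\Gamma$-representation, a direct sum of copies of $\C[\Gamma]$; thus $\dim\Hom_\Gamma(\rho_i,\tilde V')=(\dim\tilde V'^\Gamma)\dim\rho_i$, meeting the bound with equality. Both the top and the socle conditions pass to the summand $\tilde V_o$, so it remains to treat the case where $\tilde V$ is supported at the origin, equivalently where $x$ and $y$ act nilpotently.

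The remaining, and main, difficulty is this nilpotent case: I must show that a finite-dimensional $\Gamma$-equivariant $\C[x,y]$-module $\tilde V$ on which $x,y$ act nilpotently and whose top and socle are both $\rho_0$-isotypic satisfies $\dim\Hom_\Gamma(\rho_i,\tilde V)\le n\dim\rho_i$. When $\tilde V$ is cyclic, which is exactly the framing rank $r=1$ case where $\tilde V=\C[x,y]/I$ for a $\Gamma$-invariant ideal $I$, this is the proposition of \cite{CGGS} that the present lemma extends, proved there by analysing the invariant ideal directly. For general $r$ the top may be $\rho_0^{\oplus t}$ with $t>1$, and the natural move is to peel off a cyclic submodule $\C[x,y]\cdot w$ generated by an invariant $w$: being a submodule, it inherits the trivial-socle condition, so the $r=1$ estimate applies to it. The delicate point, and the step I expect to be the real obstacle, is that the complementary quotient inherits the trivial-\emph{top} condition but not obviously the trivial-\emph{socle} condition needed to continue the induction. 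I would address this by exploiting Matlis duality, which interchanges the top and socle hypotheses (both are $\rho_0$-isotypic, and $\rho_0\dual=\rho_0$) and hence lets one run the induction on whichever of submodule or quotient is better behaved, combined with the explicit description of the action as a pair of commuting nilpotent operators intertwining the isotypic components along the McKay quiver. Carrying this bookkeeping through, and checking that the invariant-dimensions add up correctly across the filtration so that the total bound is $n\dim\rho_i$ rather than a larger sum, is where the bulk of the work lies.
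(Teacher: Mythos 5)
Your opening move --- replacing $M$ by a $\Gamma$-equivariant $\C[x,y]$-module $\tilde V$ --- does not work for framing rank $r>1$, and this sinks the rest of the argument. In the $(W,V,i,j,B)$ picture of \cref{sec:VaVaViewpoint}, the preprojective relation is not $[B_x,B_y]=0$ but $B\wedge B=i\circ j$; that is, the two operators commute only up to the correction term $i\circ j\colon V\to \bigwedge^2L\otimes V$ coming from the framing. When $r=1$ one has $\operatorname{tr}_W(ji)=\operatorname{tr}_V([B_x,B_y])=0$, hence $ji=0$ as a scalar, and the standard induction combined with stability then forces $j=0$; that is the only reason $\tilde V$ is an honest cyclic $\C[x,y]$-module (an ideal quotient) in the Hilbert-scheme case. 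For $r>1$ the endomorphism $ji$ of the $r$-dimensional space $W$ is merely trace-free, and for a general rank-$r$ torsion-free sheaf it is nonzero, so $B_x$ and $B_y$ genuinely fail to commute. Consequently there is no $\C[x,y]$-module structure, no decomposition by support over $\C^2$, no Nakayama, no cyclic submodule $\C[x,y]\cdot w$, and no Matlis duality: every tool in the second half of your proposal presupposes a commutativity that the hypotheses do not provide. (Your two extractions from $\theta_0$-stability --- that $M$ is generated by $M_\infty$, and that a nonzero $B$-invariant $\Gamma$-subspace meeting the $\rho_0$-isotypic part trivially would violate stability --- are correct and are indeed the right inputs; they just have to be exploited at the level of the quiver representation.)

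Even if the commutativity problem were repaired, the proof is not complete on its own terms: the nilpotent, non-cyclic case is exactly where the content of the lemma sits, and you explicitly leave open the step where the quotient by a cyclic submodule must be shown to inherit the socle condition and where the invariant dimensions must be checked to sum to $n\dim\rho_i$. The paper avoids all of this: its proof is a one-line appeal to \cite[Proposition A.1]{CGGS}, whose argument bounds $\dim M$ directly from the preprojective relations together with the two stability consequences you identified, without ever passing to a commutative module, and is therefore insensitive to $i\circ j\neq 0$. If you want a self-contained proof, you should run the estimate in that quiver-theoretic setting rather than on $\C^2$.
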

\begin{proof}
	This is simply the proof of \cite[Proposition A.1]{CGGS}, in the case "$ 0\in J$" (notation as in \emph{ibid}). 
\end{proof}

\begin{proposition}\label{prop:mapIsSurjective}
	The map of sets $ X_{r,n\delta}\to Y_{r,n} $, given by $ \shf F\mapsto D(\shf F) $, is surjective.
\end{proposition}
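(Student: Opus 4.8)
The plan is to reduce the statement to the surjectivity of the map $f\colon \mathfrak M_{\theta_0}(r,n\delta)(\C)\to Y_{r,n}$ constructed just above, and then, for an arbitrary $\shf F\in Y_{r,n}$, to produce an explicit point of $\mathfrak M_{\theta_0}(r,n\delta)(\C)$ hitting it. The input sheaf is handled by the torsion-free inverse image: by \cref{cor:torfreepullbackisgood} we have $\pi^T\shf F\in X_{r,\mathbf v}$ for some $\mathbf v$ with $\mathbf v_0=n$, and by \cref{lem:upanddowniso} there is a canonical isomorphism $\shf F\isoto D(\pi^T\shf F)$. Thus $\pi^T\shf F$ is already a preimage of $\shf F$ under $D$, but a priori only inside $X_{r,\mathbf v}$ rather than $X_{r,n\delta}$; the whole difficulty is to replace it by a genuine element of $X_{r,n\delta}$ without changing its image.

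First I would translate $\pi^T\shf F$ into quiver data. By \cref{thm:VaVa}, applied with dimension vector $\mathbf v$, the sheaf $\pi^T\shf F$ corresponds to a $\theta$-stable $\Pi$-module $M$ of dimension $(1,\mathbf v)$ with $\sh M=\pi^T\shf F$. Since $\theta_0$ lies in the closure $\ol{C^+}$, every $\theta$-semistable module is $\theta_0$-semistable, so $M$ is $\theta_0$-semistable and its $R_{\theta_0}$-equivalence class has a well-defined concentrated module $M\con$ (\cref{def:concentratedModule}). This $M\con$ is $\theta_0$-stable, satisfies $\dim_0 M\con=n$ by \cref{cor:dimConcentratedModule}, and — crucially — obeys $\dim M\con\le(1,n\delta)$ by \cref{prop:dimensionEstimateStableRep}. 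The argument of \cref{prop:liftingisindep} applies to any $\theta$-stable module with one-dimensional framing and gives $D(\sh M)\cong D(\sh{M\con})$; combined with the previous paragraph this yields $\shf F\cong D(\sh{M\con})$.

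Next I would enlarge $M\con$ to the correct dimension. Because $\dim M\con\le(1,n\delta)$ while the two vectors already agree in the $\infty$- and $0$-components, the difference $n\delta-\dim_{\bullet}M\con$ is nonnegative and supported on the vertices $\{1,\dots,s\}$; choosing a direct sum $S$ of vertex simple modules realising it (each of which is $\theta_0$-stable with $\theta_0$-value $0$), the module $\tilde M=M\con\oplus S$ is $\theta_0$-polystable of dimension exactly $(1,n\delta)$ and has $M\con$ as its unique summand with one-dimensional framing. Hence $x\coloneqq[\tilde M]$ is a genuine point of $\mathfrak M_{\theta_0}(r,n\delta)(\C)$ whose $R_{\theta_0}$-class has concentrated module $M\con$, so by the construction of $f$ and \cref{prop:liftingisindep} we get $f(x)=D(\sh{M\con})\cong\shf F$. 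Finally, to land inside $X_{r,n\delta}$ itself, I would lift $x$ along the surjection $\mathfrak M_\theta(r,n\delta)(\C)\twoheadrightarrow\mathfrak M_{\theta_0}(r,n\delta)(\C)$ of \cref{lem:VGITsurjectivity} to a $\theta$-stable module $M'$ of dimension $(1,n\delta)$, and set $\shf E'\coloneqq\sh{M'}\in X_{r,n\delta}$ via \cref{thm:VaVa}; then $D(\shf E')=f(x)\cong\shf F$, exhibiting the required preimage.

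The main obstacle is the control of the dimension vector. The torsion-free pullback only guarantees $\pi^T\shf F\in X_{r,\mathbf v}$ with $\mathbf v_0=n$, and there is no reason for $\mathbf v$ to equal $n\delta$; the entire argument hinges on being able to pass to the concentrated module and then pad it back up to dimension $(1,n\delta)$ with vertex simples. This is only possible because \cref{prop:dimensionEstimateStableRep} forces $\dim M\con\le(1,n\delta)$, so that one may add, never remove, simples, and because \cref{prop:liftingisindep} guarantees that none of the modifications — passing to $M\con$, adjoining vertex simples, and re-lifting through the resolution — alters the associated $\OO_{\mathcal X}$-module $D(\sh{-})$. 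Verifying that the proof of \cref{prop:liftingisindep} used only $\theta$-stability and one-dimensional framing, rather than the specific dimension vector $n\delta$, is the one place that demands care.
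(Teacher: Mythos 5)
Your proposal is correct and follows essentially the same route as the paper's own proof: torsion-free pullback into $X_{r,\mathbf v}$, passage to the concentrated module, the dimension bound of \cref{prop:dimensionEstimateStableRep}, padding with vertex simples to reach $(1,n\delta)$, lifting along the VGIT surjection of \cref{lem:VGITsurjectivity}, and invoking \cref{prop:liftingisindep} to see the image under $D$ is unchanged. Your closing remark that \cref{prop:liftingisindep} must be checked to depend only on $\theta$-stability and the one-dimensional framing, not on the dimension vector $n\delta$, is a point the paper leaves implicit, and you resolve it correctly.
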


\begin{proof}
	Let $ \shf E\in Y_{r,n} $. By \cref{cor:torfreepullbackisgood}, $ \pi^T\shf E $ is an element of $ X_{r,\mathbf{v}} $ for some dimension vector $ \mathbf{v} $ such that $ {\mathbf{v}_0}=n $.
	
	Now, by \cref{thm:VaVa}, $ \pi^T\shf E $ corresponds to a point of $ \mathfrak M_\theta(r, \mathbf v) $, i.e., a stable $ (Q, \mathcal J) $-representation $ M $ of dimension $ (r,\mathbf{v}) $. Then $ M $ is  $ R_{\theta_0} $-equivalent to a $ \theta_0 $-semistable module $ M\con $. 
	
	By \cref{prop:dimensionEstimateStableRep},
	$\dim M\con$ satisfies \[ \dim_\infty M\con = 1,\quad \dim_1 M\con = n,\quad \dim M\con \le (1, n\delta).\] Then, taking the direct sum of $ M\con $ with the appropriate vertex simple modules will provide a $ \theta_0 $-semistable $ \Pi $-module $ N $ of dimension $ (1, n\delta) $, which lies in the same $ R_{\theta_0} $-equivalence class as $ M\con $.
	Since the map $ \mathfrak{M}_\theta(r, n\delta)\to \mathfrak{M}_{\theta_0}(r, n\delta) $ is surjective by \cref{lem:VGITsurjectivity}, there is a stable $ \Pi $-module $ M'\in \mathfrak{M}_\theta(r, n\delta) $, in the same $ R_{\theta_0} $-equivalence class as $ N $ and $ M\con $. 
	
	It follows from \cref{prop:liftingisindep} that the framed sheaf corresponding to this module is also mapped to $ \shf E $ by $ D $. This concludes the proof.
\end{proof}

\begin{remark}
	We can now show that there is a canonical bijection between $ Y_{1,n} $ and $ \Hilb^n(\C^2/\Gamma)(\C) $.
	Let $ \shf J\in Y_{1,n} $ be a framed sheaf. Then $ \pi^T(\shf J) $ is a framed sheaf lying in $ X_{1,\mathbf v} $ for some $ \mathbf v $ with $ \mathbf v_0=n $.
	By \cite[Proposition 28]{Nakajimabook}, this ideal sheaf can be uniquely identified with a $ \Gamma $-invariant ideal $ I $ of the ring $ \C[x,y] $, such that $ \C[x,y]/I\cong \bigoplus_{\rho_i\in R}\rho_i^{\mathbf v_i}$. 
Then the sheaf $\shf J|_{V_p}= D(\shf I)|_{V_p} $ is an ideal of the ring $ \C[x,y]^\Gamma $ of codimension $ n $, simply given by $ I\cap \C[x,y]^\Gamma $. 

\end{remark}

\subsection{An inverse map}\label{sec:inverseMap}
In this subsection, we construct a map $g\colon Y_{r,n} \to \mathfrak{M}_{\theta_0}(r,n\delta)(\C)$, and show that this is an inverse map to the map $f $ constructed in the proof of \cref{prop:liftingisindep}.

We now return to our original viewpoint (\cref{sec:StabParams}) of the quiver varieties\ie the underlying quiver has $ r $ pairs of arrows between the $ 0 $ and $ \infty $-vertices.

\begin{lemma}\label{lem:inverse exists}
	There is a canonical map $ g\colon Y_{r,n} \to \mathfrak{M}_{\theta_0}(r,n\delta)(\C)$.
\end{lemma}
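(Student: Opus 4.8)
The plan is to build $g$ by reversing, step by step, the construction of $f$ assembled in \cref{sec:moving up and down} and in the proof of \cref{prop:mapIsSurjective}. Given a framed sheaf $\shf F\in Y_{r,n}$, I would first descend to $\P^2$: by \cref{cor:torfreepullbackisgood} the torsion-free inverse image $\pi^T\shf F$ lies in $X_{r,\mathbf v}$ for some dimension vector $\mathbf v$ with $\mathbf v_0=n$. Applying the inverse direction of the Varagnolo--Vasserot bijection (\cref{thm:VaVa}) then yields a $\theta$-stable $\Pi$-module $M\coloneqq M_{\pi^T\shf F}$ of dimension $(1,\mathbf v)$, that is, a closed point of $\mathfrak{M}_\theta(r,\mathbf v)(\C)$.

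The next step is to move the stability parameter from the chamber $C^+$ to the boundary point $\theta_0$. Because the variation-of-GIT morphism $\mathfrak{M}_\theta(r,\mathbf v)\to\mathfrak{M}_{\theta_0}(r,\mathbf v)$ exists, we have $\theta\ge\theta_0$ in the preorder on stability parameters, so every $\theta$-semistable module is $\theta_0$-semistable; in particular $M$ is $\theta_0$-semistable and its $R_{\theta_0}$-equivalence class is defined. Let $M\con$ be the associated concentrated module (\cref{def:concentratedModule}). It is $\theta_0$-stable with $\dim_\infty M\con=1$ and $\dim_0 M\con=n$, and \cref{prop:dimensionEstimateStableRep} gives $\dim M\con\le(1,n\delta)$. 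Taking the direct sum of $M\con$ with the forced multiplicities $n\delta_i-\dim_i M\con$ of vertex simple modules at each vertex $i\ne 0,\infty$ produces a $\theta_0$-polystable module $N$ of dimension exactly $(1,n\delta)$, lying in the same $R_{\theta_0}$-class as $M\con$. By \cref{lem:quiverVarsAsModuliSpaces}, the $S_{\theta_0}$-equivalence class of $N$ is a closed point of $\mathfrak{M}_{\theta_0}(r,n\delta)(\C)$, and I would set $g(\shf F)$ equal to this point.

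It then remains to verify that $g$ is well defined and canonical, i.e. independent of all the choices made along the way. Isomorphic representatives of a class in $Y_{r,n}$ produce isomorphic sheaves $\pi^T\shf F$ by functoriality of $\pi^T$, hence (by the canonicity built into \cref{thm:VaVa}) the same module $M$ up to isomorphism, and hence the same $R_{\theta_0}$-equivalence class. Since the concentrated module of an $R_{\theta_0}$-class is unique up to isomorphism, and since the number of copies of each vertex simple to be adjoined is pinned down by the target dimension vector $n\delta$, the polystable module $N$ is determined up to isomorphism; its $S_{\theta_0}$-class is therefore a well-defined point, independent of choices.

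I expect the only genuine subtlety to be the passage from $\theta$-stability to $\theta_0$-semistability. The Varagnolo--Vasserot correspondence only guarantees stability with respect to the chamber parameter $\theta$, whereas the concentrated module, and with it the whole construction, is only available once $M$ is known to be semistable for the boundary parameter $\theta_0$. This is precisely the content of the inequality $\theta\ge\theta_0$ encoded by the variation-of-GIT morphism in \cref{lem:VGITsurjectivity}; once it is invoked, the remaining steps are routine bookkeeping with dimension vectors together with the uniqueness properties of the concentrated module.
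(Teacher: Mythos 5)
Your proposal is correct and follows essentially the same route as the paper: descend via $\pi^T$ into $X_{r,\mathbf v}$ using \cref{cor:torfreepullbackisgood}, pass through \cref{thm:VaVa} to a $\theta$-stable (hence $\theta_0$-semistable) $\Pi$-module, extract the concentrated module, and use \cref{prop:dimensionEstimateStableRep} to pad with vertex simples up to dimension $(1,n\delta)$. The only differences are expository — you spell out the well-definedness checks and the adjunction of vertex simples, which the paper leaves implicit in the phrase ``determines a unique $S_{\theta_0}$-equivalence class.''
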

\begin{proof}
	Let $ \shf F \in Y_{r,n}$. By \cref{lem:cohom} and \cref{cor:torfreepullbackisgood}, there is a vector $ \mathbf{v}\in \N_0^{Q_0} $ satisfying $ \mathbf v_0 =n $ such that $ \pi^T\shf F\in X_{r, \mathbf v} $. This sheaf corresponds by \cref{thm:VaVa} to a stable $ \Pi $-module, which is therefore also $ \theta_0 $-semistable. Therefore the concentrated module $ M\con $ of its $ R_{\theta_0} $-equivalence class satisfies $ {M\con}_{,0} = n $, and by \cref{prop:dimensionEstimateStableRep} also satisfies $ \dim M\con \le (1, n\delta) $. But then $ M\con $ determines a unique $ S_{\theta_0} $-equivalence class of $ \Pi $-modules of dimension $ (1, n\delta) $ -- i.e. a $\C $-valued point of $ \mathfrak{M}_{\theta_0}(r, n\delta) $.
	
	We set this point to be  $ g(\shf F) $.
\end{proof}

It is time to prove:

\begin{theorem}\label{thm:main}
	There is a canonical bijection of sets between $ Y_{r,n} $ and $ \mathfrak{M}_{\theta_0}(r,n\delta)(\C) $.
\end{theorem}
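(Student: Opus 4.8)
The goal is to show that the map $f\colon \mathfrak{M}_{\theta_0}(r,n\delta)(\C)\to Y_{r,n}$ constructed after \cref{prop:liftingisindep} and the map $g\colon Y_{r,n}\to\mathfrak{M}_{\theta_0}(r,n\delta)(\C)$ of \cref{lem:inverse exists} are mutually inverse. It is enough to prove that $f$ is surjective and that $g\circ f=\mathrm{id}$, since the latter identity makes $f$ injective, hence bijective, after which $g=f^{-1}$ follows formally. Surjectivity is quick: given $\shf F\in Y_{r,n}$, \cref{prop:mapIsSurjective} yields $\shf E\in X_{r,n\delta}$ with $D(\shf E)\cong\shf F$, and by \cref{thm:VaVa} we may write $\shf E=\sh M$ for a $\theta$-stable $M\in\mathfrak{M}_\theta(r,n\delta)(\C)$; if $x$ denotes the image of $M$ under the surjection $\mathfrak{M}_\theta(r,n\delta)\to\mathfrak{M}_{\theta_0}(r,n\delta)$ of \cref{lem:VGITsurjectivity}, then $f(x)=D(\sh M)\cong\shf F$ directly from the definition of $f$.

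The substance of the argument is $g\circ f=\mathrm{id}$. Fix $x$ and let $M\con$ be its concentrated module, so that $f(x)=D(\sh M)\cong D(\sh{M\con})$ by \cref{prop:liftingisindep}, and hence $\pi^T(f(x))\cong\pi^T(D(\sh{M\con}))$. Put $\shf N:=\pi^T(D(\sh{M\con}))$; by \cref{cor:torfreepullbackisgood} this lies in some $X_{r,\mathbf v''}$ with $\mathbf v''_0=n$, and \cref{thm:VaVa} attaches to it a $\theta$-stable module $N$. By construction $g(f(x))$ is the point of $\mathfrak{M}_{\theta_0}$ determined by the concentrated module of $N$, so it suffices to prove $N\con\cong M\con$. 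Now \cref{lem:downUpInjection} provides an injection $\shf N\injto\sh{M\con}$ whose cokernel $\shf C$ is a skyscraper supported at $o$, since the two sheaves agree on $U_l$. Tensoring with the invertible sheaf $\shf I_{l_\infty}$ and using the standard vanishing $H^0(\P^2,\shf N(-l_\infty))=H^0(\P^2,\sh{M\con}(-l_\infty))=0$ for framed torsion-free sheaves (together with $H^1(\P^2,\shf C)=0$), the long exact cohomology sequence collapses to a $\Gamma$-equivariant short exact sequence
\[ 0\to\shf C\to V_N\to V_{M\con}\to 0, \]
in which $V_N=H^1(\P^2,\shf N(-l_\infty))$ and $V_{M\con}=H^1(\P^2,\sh{M\con}(-l_\infty))$. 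By functoriality of the monad construction of \cref{thm:VaVa}, this is exactly the non-framing part of a morphism of $\Pi$-modules $N\twoheadrightarrow M\con$ which is an isomorphism on the framing vertex and hence surjective.

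It remains to control the kernel $K$ of $N\twoheadrightarrow M\con$. Its framing component vanishes, so $\dim_\infty K=0$; comparing $\rho_0$-isotypic parts in the sequence above, and using $(\dim N)_0=\mathbf v''_0=n=(\dim M\con)_0$ from \cref{cor:torfreepullbackisgood} and \cref{cor:dimConcentratedModule}, gives $\dim_0 K=0$ as well. Consequently $\theta_0(K)=0$, and $K$ is a module over the preprojective algebra of the finite-type Dynkin diagram obtained by deleting the vertices $\infty$ and $0$; by \cref{rem:vertex_simples} all its composition factors are vertex simple modules supported on $I=\{\,i\mid\theta_0(\rho_i)=0\,\}$. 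Refining the filtration $0\subset K\subset N$ then exhibits the $\theta_0$-polystable representative of $N$ as the direct sum of $M\con$ with vertex simples at $I$; since $M\con$ is the unique summand with $\dim_\infty=1$, we conclude $N\con\cong M\con$, whence $g(f(x))=x$.

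The step I expect to be the main obstacle is the passage from the sheaf injection of \cref{lem:downUpInjection} to the surjection $N\twoheadrightarrow M\con$ of quiver representations: one must verify that this injection induces, through the cohomological monad description underlying \cref{thm:VaVa}, an honest morphism of $\Pi$-modules, and that the vanishing of $H^0(\P^2,-(-l_\infty))$ forces the connecting homomorphism to identify the kernel on $V$ with the skyscraper $\shf C$. Once the kernel is known to be concentrated away from the vertices $0$ and $\infty$, the rest is routine bookkeeping with $R_{\theta_0}$-equivalence and the concentrated module.
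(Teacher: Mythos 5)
Your proof is correct and takes essentially the same route as the paper: the crux in both is to convert the injection $\pi^T(D(\shf E))\injto\shf E$ of \cref{lem:downUpInjection}, whose cokernel is a skyscraper at $o$ with vanishing $H^1$, into a surjection of $\Pi$-modules via functoriality of the monad construction, and then to note that the kernel lives at the finite-type vertices so the concentrated module is unchanged. The only differences are organizational: you compare $N$ directly with $\sh{M\con}$ rather than with an arbitrary member of the fibre of $D$, and you deduce $f\circ g=\mathrm{id}$ formally from surjectivity of $f$ together with $g\circ f=\mathrm{id}$ instead of verifying it separately as the paper does.
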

\begin{proof}
	The only part that remains is to prove that the maps $ f $ and $ g $, respectively constructed in \cref{prop:liftingisindep} and \cref{lem:inverse exists} are mutually inverse.

	So let $ x \in \mathfrak{M}_{\theta_0}(r, n\delta)$, and let $ \shf F = f(x) $.
	Then $ g(\shf F)\in \mathfrak{M}_{\theta_0}(r, n\delta) $, but it is not clear whether $ g(\shf F) $ equals $ x$.
	We shall show that any two elements of $ X_{r, n\delta} $ that both are mapped to $ \shf F $ by $ f $, correspond to $ S_{\theta_0} $-equivalent $ \Pi $-modules; this implies that $ g(\shf F)=x $.
	Let $ \{\shf E_i\}_{i\in J} $ be the set of all coherent framed $ \Gamma $-equivariant torsion free sheaves on $ \P^2 $ such that $ D(\shf E_i)= \shf F $. 
	This set is nonempty, as by \cref{prop:mapIsSurjective}, $ \pi^T\shf F $ is a member.
	As $ D(\shf E_i)|_{V_p} = \pi_*(\shf E_i|_{U_o})^\Gamma $, the counit of the adjunction $ \pi_*\vdash \pi^* $ induces for every $ \shf E_i $  a map \[ \pi^*\shf F = \pi^*(D(\shf E_i)) \to \shf E_i. \] Since $ \shf E_i $ is torsion-free, this map annihilates the torsion subsheaf of $ \pi^*\shf F $, and we obtain for every $ i $ a map $ \tau_i\colon \pi^T\shf F\to \shf E_i $.
	
	Set $ \shf E=\pi^T\shf F $. Then for any $ i $, the morphism $\tau_i\colon \shf E \to \shf E_i $ is injective by \cref{lem:downUpInjection}.
	Let $ \shf K_i \coloneqq \coker \tau_i $, a torsion sheaf supported at $ o $. 
	For any $ \shf E_i $, the short exact sequence \[ 0\to \shf E\to \shf E_i\to \shf K\to 0 \]
	induces a sequence of complexes \[  C^\bullet_{M_{\shf E}}\to C^\bullet_{M_{\shf E_i}}\to  C^\bullet_{M_{\shf K}} \] exact in the middle. (Note that the construction of the complex $ C^\bullet_{M_{\shf K}} $ works just as for $ \shf E $, even though $ \shf K $ is torsion.)
	Since $ \shf K $ is supported only at $ o $, we have \[ H^1(\P^2, \shf K_i(-l_\infty)) = H^1(\P^2, \shf K_i(-l_\infty)\otimes \shf Q\dual) =0 ,\]
	and thus $ C^\bullet_{M_{\shf K}} =0$. It follows that there is for every $ i $ a surjection of complexes $ C^\bullet_{M_{\shf E}}\to C^\bullet_{M_{\shf E_i}} $. 
	
	Hence we have surjections of $ \theta_0 $-semistable modules $ M_{\shf E}\to M_{\shf E_i} $, which implies that $ M_{\shf E}$ and $M_{\shf E_i} $ are $ R_{\theta_0} $-equivalent -- and so any two $ M_{\shf E_i} $ must also be $ R_{\theta_0} $-equivalent.
	Therefore any two sheaves in $ X_{r, n\delta} \cong \mathfrak{M}_{\theta}(r, n\delta) $ mapped to $ \shf F $ by $ D(-) $ must correspond to $ S_{\theta_0} $-equivalent $ \Pi $-modules, and so they determine the same point of $ \mathfrak{M}_{\theta_0}(r, n\delta) $. But this shows that $ g(f(M)) = M $.

	On the other hand, choose a sheaf $ \shf F\in Y_{r,n} $ and set $ M=g(\shf F) $. By construction, the concentrated $ \Pi $-module $ R_{\theta_0} $-equivalent to $ M $ is $ R_{\theta_0} $-equivalent to the stable module corresponding to $ \pi^T\shf F $. But \cref{prop:liftingisindep} tells us that, for any two sheaves $ \shf E_1, \shf E_2 $ on $ \P^2 $ corresponding to elements of one $ R_{\theta_0} $-equivalence class, we have $ D(\shf E_1)=D(\shf E_1) $. Since $ D(\pi^T(\shf F))= \shf F $, by \cref{lem:upanddowniso}, it follows that $ f(g(\shf F))=\shf F $.
	
	Thus $ f $ and $ g $ are mutually inverse maps. The proof is complete.
\end{proof}

	\appendix
	\section{Surjectivity of morphisms induced by variation of GIT}\label{sec:surjecProof}\label{sec:HighRankCompleteDegen}
We prove \cref{lem:VGITsurjectivity}.

For this Appendix, let us write \[ \mathfrak{M}_\theta =\mathfrak{M}_\theta(r, n\delta),\quad  \mathfrak{M}_{\theta_0} = \mathfrak{M}_{\theta_0}(r, n\delta),\textrm{ and } \mathfrak{M}_0 = \mathfrak{M}_0(r,n\delta).\]

First, we need the following description of the variety $\mathfrak{M}_0(r, \mathbf{v}) $ due to Nakajima: 

Let $ \lambda = (\lambda_1, \dots, \lambda_k) $ be a partition of an integer $ L $, and set \[ \operatorname{Sym}^L_\lambda(\C^2) =  \left\{\sum_{i=1}^r \lambda_i[x_i]\mid x_i\ne 0 \textrm{ and } x_i\ne x_j \textrm{ for } i\ne j \right\} .\]

\begin{lemma}[{\cite[Equation 4.1]{Nak02}}]\label{lem:NakHigherRankDesc}

	\begin{equation}\label{eq:decomp} \mathfrak{M}_0(r, \mathbf v) = \bigsqcup_{\substack{{\lambda, \mathbf{v}'}\\{m\delta+\mathbf{v}'\leq \mathbf v}}}\mathfrak{M}_0^s(r, \mathbf{v}')\times (\operatorname{Sym}^{(m|\Gamma|)}_\lambda \C^2)^\Gamma .\end{equation}
	where $ \lambda = (\lambda_1, \lambda_2,\dots, \lambda_i) $ is a partition of $ m $, and
	the space $ \mathfrak{M}_0^s(r, \mathbf{v}')$ parametrises framed $ \Gamma $-equivariant \emph{locally free} sheaves  $ (\shf E, \phi_{\shf E}) $,  framed along $ l_\infty $, of rank $ r $, satisfying \[ H^1(\P^2, \shf E\otimes \shf I_{l_\infty})\cong\bigoplus \rho_i^{\mathbf{v}'_i} \] as $ \Gamma $-representations.
	Furthermore, the map $ X_{r, \mathbf v}\cong \mathfrak{M}_\theta(r, \mathbf v)\to \mathfrak{M}_0(r, \mathbf v) $ can be identified with \[ (\shf E, \phi_{\shf E})\mapsto ((\shf E\dual\dual, \phi_{\shf E}), \operatorname{Supp}(\shf E\dual\dual/\shf E) ) .\] 
\end{lemma}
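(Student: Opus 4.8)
The plan is to derive this from Nakajima's analysis in \cite{Nak02}, after translating his conventions into ours; the mathematical heart is a double-dual exact sequence together with the behaviour of the variation-of-GIT morphism. Since $\mathfrak{M}_0(r,\mathbf v)$ is the affine GIT quotient at the trivial stability parameter, its closed points parametrise $S$-equivalence classes of semistable $\Pi$-modules, equivalently polystable representations lying on closed $G_{\mathbf v}$-orbits. The first step I would take is to recall the key geometric input: for a $\Gamma$-equivariant torsion-free sheaf $\shf E$ on the smooth surface $\P^2$, the double dual $\shf E\dual\dual$ is reflexive and hence locally free, and the canonical inclusion fits into a $\Gamma$-equivariant short exact sequence
\[ 0\to \shf E\to \shf E\dual\dual\to \shf E\dual\dual/\shf E\to 0 \]
in which $\shf E\dual\dual/\shf E$ is a finite-length torsion sheaf supported exactly where $\shf E$ fails to be locally free. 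This is the sequence underlying the product appearing in \eqref{eq:decomp}.

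Next I would identify the two factors and the indexing data. Since $\shf E$ is already locally free near $l_\infty$ by \cref{lem:locFreeAroundFraming}, it agrees there with $\shf E\dual\dual$, so the framing $\phi_{\shf E}$ transfers to a framing of $\shf E\dual\dual$; recording the $\Gamma$-representation type of $H^1(\P^2, \shf E\dual\dual\otimes \shf I_{l_\infty})$ by a dimension vector $\mathbf v'$ then places $(\shf E\dual\dual, \phi_{\shf E})$ in the locally-free stratum $\mathfrak{M}_0^s(r,\mathbf v')$. The quotient $\shf E\dual\dual/\shf E$ is $\Gamma$-equivariant and supported on a $\Gamma$-invariant finite subscheme of $\C^2\setminus\{0\}$, on which $\Gamma$ acts freely; its support with multiplicities is thus a point of the $\Gamma$-fixed symmetric product $(\operatorname{Sym}^{(m|\Gamma|)}_\lambda\C^2)^\Gamma$, with the partition $\lambda$ recording the multiplicities at the distinct $\Gamma$-orbits. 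Because each free orbit contributes the regular representation, of dimension vector $\delta$, the additivity of numerical invariants across the sequence yields the constraint $m\delta+\mathbf v'\le \mathbf v$ indexing the disjoint union.

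The main obstacle, and the bulk of Nakajima's argument, is to show that this set-theoretic stratification is exactly the one produced by the morphism $\mathfrak{M}_\theta\to\mathfrak{M}_0$, and that under the bijection of \cref{thm:VaVa} this morphism is precisely the double-dual map $(\shf E,\phi_{\shf E})\mapsto ((\shf E\dual\dual,\phi_{\shf E}),\operatorname{Supp}(\shf E\dual\dual/\shf E))$. Concretely, one must check that degenerating a $\theta$-stable module to the closed orbit in its $G_{\mathbf v}$-closure corresponds, on the sheaf side, to replacing $\shf E$ by its reflexive hull together with the punctual quotient---this is the content of Nakajima's study of the ADHM data and its associated monad. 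Having set up the dictionary between his notation and ours, I would then simply invoke \cite[Equation 4.1]{Nak02} for the remaining verifications rather than reproducing them.
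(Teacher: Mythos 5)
Your overall strategy is the same as the paper's: \cref{lem:NakHigherRankDesc} is essentially quoted from \cite[Equation 4.1]{Nak02}, and the only additions the paper makes are the observations that, by \cref{lem:locFreeAroundFraming}, $\shf E$ is locally free in a neighbourhood of $l_\infty$, so $\shf E\to\shf E\dual\dual$ is an isomorphism there, the framing $\phi_{\shf E}$ transfers to $\shf E\dual\dual$, and $\operatorname{Supp}(\shf E\dual\dual/\shf E)\subset \P^2\setminus l_\infty=\C^2$. Your first and last paragraphs reproduce exactly this, and deferring the remaining verifications to Nakajima is what the paper does too.

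There is, however, a genuine error in your middle paragraph: the quotient $\shf E\dual\dual/\shf E$ is \emph{not} in general supported on $\C^2\setminus\{0\}$. It can have a component at the origin $o$ --- for $r=1$ take $\shf E=\shf I_{\{o\}}\subset\OO_{\P^2}$, whose double-dual quotient is the skyscraper sheaf at $o$. Local freeness near $l_\infty$ only gives $\operatorname{Supp}(\shf E\dual\dual/\shf E)\subset\C^2$, which is all the paper claims. The slip matters because it makes your justification of the indexing internally inconsistent: if the support really avoided the origin, every $\Gamma$-orbit in it would be free and would contribute a copy of the regular representation, so additivity across the double-dual sequence would force the \emph{equality} $m\delta+\mathbf v'=\mathbf v$ rather than the inequality appearing in \eqref{eq:decomp}. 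The slack $\mathbf v-\mathbf v'-m\delta$ is exactly the contribution of the part of $\shf E\dual\dual/\shf E$ sitting at $o$; on the quiver side it is carried by vertex simple summands of the polystable representative (cf.\ \cref{rem:vertex_simples}), which carry no moduli and hence do not appear as a factor of the stratum, and the symmetric-product factor records only the part of the support cycle away from the origin. Since the proof is in any case delegated to \cite{Nak02} this does not invalidate the citation-based argument, but as written your explanation of why the strata have the stated form would not survive the case of a sheaf that fails to be locally free at the origin.
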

	
Here we use \cref{lem:locFreeAroundFraming} to say that $ \shf E $ is locally free in a neighbourhood $ U $ of $ l_\infty $. Then the morphism $ \shf E\to \shf E\dual\dual $ is an isomorphism over $ U $, so $ \phi_{\shf E} $ is also a framing morphism for $ \shf E\dual\dual $, and the support of $ \shf E\dual\dual/\shf E $ must be contained in the affine scheme $ \P^2\setminus l_\infty =\C^2$.

Let $ t\colon \mathfrak{M}_\theta\to \mathfrak{M}_{\theta_0} $ be the projective morphism induced by variation of GIT. We will prove that $ t $ is a resolution of singularities
\begin{proof}[Proof of \cref{lem:VGITsurjectivity}]\label{prop:degenStabilCond}
	Our strategy is to show that the subscheme $ \mathfrak{M}_{\theta_0}^s $ consisting of $ \theta_0 $-stable modules is nonempty, and therefore dense in $ \mathfrak M_\theta $. By general results on Nakajima quiver varieties (see \eg \cite[2.6]{Nakajima94}), $ \mathfrak{M}_\theta $ is smooth, and if $ \mathfrak{M}_{\theta_0}^s\ne \emptyset $, \[ \dim \mathfrak{M}_{\theta_0}^s = \dim \mathfrak{M}_{\theta} = 2rn.\]
	
	Since a $ \theta_0 $-stable module is $ \theta $-stable, $ \mathfrak{M}_{\theta_0}^s\ne \emptyset $ will imply that $ t $ is birational, and since $ t $ is projective, also that it is surjective. Since  $ \mathfrak{M}_\theta $ is smooth, $ t $ will then be a resolution of singularities.
	
	So it is enough to find an element of $ \mathfrak{M}_{\theta_0}^s $.
	
	It follows from \cref{lem:NakHigherRankDesc} and \cref{rem:vertex_simples}, that there is a decomposition
	\begin{equation} \mathfrak{M}_0 = \bigsqcup_{\substack{{m, m'}\\{m\delta+m'\delta = n\delta}}}\mathfrak{M}_0^s(r, m'\delta)\times \Sym^m(\C^2/\Gamma).\end{equation}

	Let us then choose \[ S\subset (\Sym^{(n|\Gamma|)}_{(1,1,\dots, 1)} \C^2)^\Gamma\subset \Sym^n(\C^2/\Gamma), \] consisting of $ n $ distinct $ \Gamma $-orbits of $ |\Gamma| $ points in $ \C^2 $ (thus no orbit including the origin), and let $ \shf I_S$ be the ideal sheaf of these $ n|\Gamma| $ points, considered as a subscheme of $ \P^2 $. 
	Then the sheaf $\shf E= \shf I_S\oplus \OO_{\P^2}^{\oplus r-1} $ is an element of $ X_{r,n\delta}\cong \mathfrak{M}_\theta $, mapping to $ x =( (\OO_{\P^2}^{\oplus r}, \id_{\OO_{l_\infty}}), S) $ in $ \mathfrak{M}_0 $. Especially $ \shf E $ corresponds, through \cref{thm:VaVa}, to a $ \theta $-stable $ \Pi $-module $ M_\shf E $.
	
	Now note that an element $ (p_1+p_2+\dots+p_n)\in \operatorname{Sym}^n(\C^2/\Gamma) $ corresponds to a direct sum of $ n $ simple $ \Pi_\Gamma $-modules. Each of these simple modules is either a vertex simple supported at $ 0 $ (for a $ p_i $ equal to the singular point $ p\in \C^2/\Gamma $), or has dimension $ \delta $ (for a $ p_i $ different from $ p$).
	The $ \Pi $-module corresponding to $ \OO_{\P^2}^{r-1} $, as an element of $ X_{r, 0} $, is the vertex simple module supported at $ \infty $.
	
	Thus the $ 0 $-polystable (i.e., semisimple) $ \Pi $-module corresponding to $ x \in \mathfrak{M}_0$ can thus be written as a direct sum \[M_{x}\coloneqq N_\infty \oplus N_1\oplus\dots\oplus N_n, \] where $N_\infty $ is a vertex simple module, supported only at $ \infty $, and each $ N_i $ is simple, of dimension vector $ (0, \delta) $. Especially, there is no vertex simple summand supported at another vertex than $ \infty $.
	This means that the module $ t(M_{\shf E}) $ is $ \theta_0 $-stable. For if it was not, its $ \theta_0 $-polystable equivalent would, by \cref{cor:dimConcentratedModule}, be the direct sum of a concentrated module and vertex simple modules supported away from the $ 0 $-vertex. Then the image $ M_x $ of $ M_{\shf E} $ in $ \mathfrak{M}_0 $ would also have vertex simple summands away from this vertex, which contradicts its construction.
	
	Thus $ \mathfrak{M}_{\theta_{0}}^s $ is nonempty. This concludes the proof of (2) of \cref{lem:VGITsurjectivity}
	\end{proof}
\begin{remark}
Using very similar ideas, we can also show that $ C^+_{r,n\delta} $ is a genuine chamber in the wall-and-chamber structure on $ \Theta_{r,n\delta} $, which also follows from \cite{BellamyCrawSchedler}:

	Let $ r>1 $, and let $ I\subset Q_{\Gamma, 0} $. Set $ \theta_I\in \Theta_{r, n\delta} $ to be any stability parameter such that $ \theta_{I}(\rho_\infty)<0, \theta_{I}(\rho_i)>0 $ for $ i\in I $, and $ \theta_{I}(\rho_j)=0 $ for other $ j $. Any parameter in $ \ol{C^+_{r,n\delta}}\setminus C^+_{r,n\delta} $ is on this form for some $ I\ne Q_{\Gamma, 0} $.

We will show that $ \mathfrak{M}_{\theta_I}(r, n\delta) $ is singular\ie $ \theta_I $ is not a generic stability parameter in $ \Theta_{r, \mathbf v} $. 

Let $ \ngammahilb $ be the moduli space of $ \Gamma $-invariant ideals $ I\subset \C[x,y] $ such that $ \C[x,y]/I\isoto \bigoplus_{\rho_i\in R} \rho_i^{n\delta_i}$. Then $ \ngammahilb $ is a smooth quasiprojective scheme, and we have an isomorphism $ \ngammahilb\isoto \mathfrak{M}_{\theta}(1, n\delta) $ \cite{Wang99} (inducing a bijection $ \ngammahilb(\C)\cong X_{1, n\delta} $).
We start by noting that the morphism $\pi_I\colon \ngammahilb\isoto \mathfrak{M}_{\theta}(1, n\delta) \to \mathfrak{M}_{\theta_I}(1, n\delta) $, induced by variation of GIT, is surjective (for instance by a straightforward adaptation of the proof above).
		
		So choose some $ M_I\in \mathfrak{M}_{\theta}(1, n\delta) $, which is not $ \theta_I $-stable, meaning that there is a $ \Pi $-submodule $ N\subset M_I $ such that $ N $ is supported on $ I $. $ M_I $ exists because $ \theta_I $ is not a generic stability parameter in $ \Theta_{1, n\delta} $ (see \eg \cite[Example 2.1]{BelCra})
		Now set $ \shf{I}\in \ngammahilb $ to be the ideal sheaf corresponding to $ M_I $. Then $ \shf I\oplus \OO_{\P^2}^{\oplus r-1} $ is a point of $ X_{r, n\delta} $, corresponding to a $ \theta $-stable module $ M $. By the definition of $ M $, precisely one of the arrows $ b_i\colon \infty\to 0 $ in $ Q_1 $ will be represented by a nonzero map in $ M $. It follows that $ N $ will also be a submodule of $ M $. Thus $ M $ is not $ \theta_{I} $-stable.
		
		Finally, we need to show that a general element of $ \mathfrak{M}_{\theta_{I}}(r, n\delta) $ corresponds to a $ \theta_{I} $-stable $ \Pi $-module.
		So choose $ \shf I'\in \ngammahilb $ such that the $ \theta $-stable $ \Pi $-module corresponding to $ \shf I' $ is also $\theta_I $-stable. Then the $ \Pi $-module corresponding to $ \shf I'\oplus \OO_{\P^2}^{\oplus r-1} $ will also be $ \theta_{I} $-stable.
		
		It follows that $ C^+_{r,n\delta} $ is a chamber in $ \Theta_{r, n\delta} $.
\end{remark}

\renewcommand{\bibname}{References}

\printbibliography
\end{document}